\documentclass{article}
 \topmargin =0mm \headheight=12.0pt \headsep=5mm \textheight =230mm
 \textwidth =165mm \oddsidemargin=0mm\evensidemargin =0mm
\sloppy \brokenpenalty=10000
\usepackage{fullpage}
\usepackage{fancyhdr}
\usepackage{amsmath, amssymb, latexsym, amscd, amsthm,amsfonts,amstext}
\usepackage{subfigure}
\usepackage{enumerate}
\usepackage{xcolor}
\usepackage{textcomp}
\usepackage{mathtools}

\def\div{\operatorname{div}}
\usepackage{graphicx}
\newtheorem{theorem}{Theorem}[section]
\newtheorem{lemma}[theorem]{Lemma}
\newtheorem{proposition}[theorem]{Proposition}
\newtheorem{corollary}[theorem]{Corollary}
\numberwithin{equation}{section}
\newtheorem{definition}[theorem]{Definition}

\newenvironment{indeed}[0]{Indeed, }{}
\numberwithin{equation}{section}

\usepackage[mathscr]{eucal}
\usepackage{graphicx}
\usepackage{subfigure}
\usepackage{cite}
\usepackage{stfloats}
\usepackage{fancyhdr}
\usepackage{enumerate}
\usepackage{xcolor}

\def\div{\operatorname{div}}
\setcounter{secnumdepth}{4}



\usepackage{url}

\def\div{\operatorname{div}}
\allowdisplaybreaks
\title{The Foldy-Lax approximation of the scattered waves by many small bodies for the Lam\'e system}
\author{
Durga Prasad Challa\footnote{C\lowercase{orresponding author} : Durga Prasad Challa}
\thanks{RICAM, Austrian Academy of Sciences,
Altenbergerstrasse 69, A-4040, Linz, Austria.
Email:durga.challa@oeaw.ac.at, Tel: +43 (0)732 2468 5234.
Supported by the Austrian Science Fund (FWF): P22341-N18.}
\and  Mourad Sini
\thanks{RICAM, Austrian Academy of Sciences,
Altenbergerstrasse 69, A-4040, Linz, Austria.
Email:mourad.sini@oeaw.ac.at, Tel: +43 (0)732 2468 5258.
Partially supported by the Austrian Science Fund (FWF): P22341-N18.}
}

\begin{document}
\graphicspath{{Smallscattersreport_Figures_50incobs/}}
\maketitle
\begin{abstract}
 We are concerned with the linearized, isotropic and homogeneous elastic scattering problem by many small rigid obstacles of arbitrary, Lipschitz regular, shapes in 3D case.
 We prove that there exists two constant $a_0$ and $c_0$, depending only on the Lipschitz character of the obstacles, such that under the conditions $a\leq a_0$ and $\sqrt{M-1}\frac{a}{d} \leq c_0$ on the number $M$ of the obstacles, their maximum diameter $a$ and
the minimum distance between them $d$, the corresponding Foldy-Lax approximation of the farfields is valid. In addition, we provide the error of this approximation explicitly in terms of the three parameters $M, a$ and $d$.
These approximations can be used, in particular, in the identification problems (i.e. inverse problems) and in the design problems (i.e. effective medium theory).

\end{abstract}

\textbf{Keywords}: Elastic wave scattering, Small-scatterers, Foldy-Lax approximation, Capacitance.


\pagestyle{myheadings}
 \thispagestyle{plain}

\section{Introduction and statement of the results}\label{Introduction-smallac-sdlp}

 Let $B_1, B_2,\dots, B_M$ be $M$ open, bounded and simply connected sets in $\mathbb{R}^3$ with Lipschitz boundaries, 
containing the origin.
We assume that their sizes and Lipschitz constants are uniformly bounded.
We set $D_m:=\epsilon B_m+z_m$ to be the small bodies characterized by the parameter
$\epsilon>0$ and the locations $z_m\in \mathbb{R}^3$, $m=1,\dots,M$.
\par
 Assume that the Lam\'e coefficients $\lambda$ and $\mu$ are constants satisfying $ \mu > 0 \mbox{ and } 3\lambda+2\mu >0$.
Let $U^{i}$ be a solution of the Navier equation $(\Delta^e + \omega^{2})U^{i}=0 \mbox{ in } \mathbb{R}^{3}$, $\Delta^{e}:=(\mu\Delta+(\lambda+\mu)\nabla \div)$.
We denote by  $U^{s}$ the elastic field scattered by the $M$ small bodies $D_m\subset \mathbb{R}^{3}$ due to
the incident field $U^{i}$. We restrict ourselves to the scattering by rigid bodies. Hence the total field $U^{t}:=U^{i}+U^{s}$
satisfies the following exterior Dirichlet problem of the elastic waves
\begin{equation}
(\Delta^e + \omega^{2})U^{t}=0 \mbox{ in }\mathbb{R}^{3}\backslash \left(\mathop{\cup}_{m=1}^M \bar{D}_m\right),\label{elaimpoenetrable}
\end{equation}
\begin{equation}
U^{t}|_{\partial D_m}=0,\, 1\leq m \leq M\label{elagoverningsupport}
\end{equation}
with the Kupradze radiation conditions (K.R.C)
\begin{equation}\label{radiationcela}
\lim_{|x|\rightarrow\infty}|x|(\frac{\partial U_{p}}{\partial|x|}-i \kappa_{p^\omega}U_{p})=0,  \mbox{  and  }
\lim_{|x|\rightarrow\infty}|x|(\frac{\partial U_{s}}{\partial|x|}-i \kappa_{s^\omega}U_{s})=0,
\end{equation}
where the two limits are uniform in all the directions $\hat{x}:=\frac{x}{|x|}\in \mathbb{S}^{2}$.
Also, we denote $U_{p}:=-\kappa_{p^\omega}^{-2}\nabla (\nabla\cdot U^{s})$ to be the longitudinal
(or the pressure or P) part of the field $U^{s}$ and $U_{s}:=\kappa_{s^\omega}^{-2}\nabla\times(\nabla\times U^{s})$ to be the transversal (or the shear or S) part of
the field $U^{s}$ corresponding to the Helmholtz decomposition $U^{s}=U_{p}+U_{s}$. The constants $\kappa_{p^\omega}:=\frac{\omega}{c_p}$ and
$\kappa_{s^\omega}:=\frac{\omega}{c_s}$ are known as the longitudinal and transversal wavenumbers, $c_p:=\sqrt{\lambda+2\mu}$ and $c_s:=\sqrt{\mu}$ are the corresponding phase velocities, respectively and $\omega$ is the frequency.
\par
The scattering problem (\ref{elaimpoenetrable}-\ref{radiationcela}) is well posed in the H\"{o}lder or Sobolev spaces,
see \cite{C-K:1998,C-K:1983,Kupradze:1965, K-G-B-B:1979} for instance, and the scattered field $U^s$ has the following asymptotic expansion:

\begin{equation}\label{Lamesystemtotalfieldasymptoticsmall}
 U^s(x) := \frac{e^{i\kappa_{p^\omega}|x|}}{|x|}U^{\infty}_{p}(\hat{x}) +
\frac{e^{i\kappa_{s^\omega}|x|}}{|x|}U^{\infty}_{s}(\hat{x}) + O(\frac{1}{|x|^{2}}),~|x|\rightarrow \infty
\end{equation}
uniformly in all directions $\hat{x}\in \mathbb{S}^{2}$. The longitudinal part of the far-field, i.e. $U^{\infty}_{p}(\hat{x})$ is normal to $\mathbb{S}^{2}$
while the transversal part $U^{\infty}_{s}(\hat{x})$ is tangential to $\mathbb{S}^{2}$.
As usual in scattering problems we use plane incident waves in this work. For the Lam\'e system, the full plane incident wave is of the form
$U^{i}(x,\theta):=\alpha\theta\,e^{i\kappa_{p^\omega}\theta\cdot x}+\beta\theta^\bot\,e^{i\kappa_{s^\omega}\theta\cdot x}$,
where $\theta^{\bot}$ is any direction in $\mathbb{S}^{2}$ perpendicular to the incident direction $\theta \in \mathbb{S}^2$, $\alpha,\beta$
are arbitrary constants.  In particular, the pressure and shear incident waves are $
U^{i,p}(x,\theta) := \theta e^{i\kappa_{p^\omega}\theta\cdot x}\mbox{ and } U^{i,s}(x,\theta) := \theta^{\bot} e^{i\kappa_{s^\omega}\theta\cdot x}$, respectively.
Pressure incident waves propagate in the direction of $\theta$, whereas shear
incident waves propagate in the direction of $\theta^{\bot}$. The functions
$U^{\infty}_p(\hat{x}, \theta):=U^{\infty}_p(\hat{x})$ and $U^{\infty}_s(\hat{x}, \theta):=U^{\infty}_s(\hat{x})$ for $(\hat{x}, \theta)\in \mathbb{S}^{2} \times\mathbb{S}^{2}$
are called the P part and the S part of the far-field pattern respectively.
\newline

\begin{definition} 
\label{Def1}
We define
\begin{enumerate}
 \item $a$ as the maximum among the diameters, $diam$, of the small bodies $D_m$, i.e.
 \begin{equation}\label{def-a-ela}
a:=\max\limits_{1\leq m\leq M } diam (D_m) ~~\big[=\epsilon \max\limits_{1\leq m\leq M } diam (B_m)\big],
\end{equation}
 \item  $d$ as the minimum distance  between the small bodies $\{D_1,D_2,\dots,D_m\}$, i.e.
$$d:=\min\limits_{\substack{m\neq j\\1\leq m,j\leq M }} d_{mj},$$
$\text{where}\,d_{mj}:=dist(D_m, D_j)$. We assume that
\begin{equation}\label{def-dmax-elasmall}
0\,<\,d\,\leq\,d_{\max},
\end{equation}
and $d_{\max}$ is given.
\item $\omega_{\max}$ as the upper bound of the used frequencies, i.e. $\omega\in[0,\,\omega_{\max}]$.
\item $\Omega$ to be a bounded domain in $\mathbb{R}^3$ containing the small bodies $D_m,\,m=1,\dots,M$.
\end{enumerate}
\end{definition}
\bigskip

The main result of this paper is the following theorem.
\begin{theorem}\label{Maintheorem-ela-small-sing}
 There exist two positive constants $a_0$ and $c_0$ depending only on the size of $\Omega$, the
Lipschitz character of $B_m,m=1,\dots,M$, $d_{\max}$ and $\omega_{\max}$ such that
if
\begin{equation}\label{conditions-elasma}
a \leq a_0 ~~ \mbox{and} ~~ \sqrt{M-1}\frac{a}{d}\leq c_0
\end{equation}
then the P-part, $U^\infty_p(\hat{x},\theta)$, and the S-part, $U^\infty_s(\hat{x},\theta)$, of the far-field pattern have the following asymptotic expressions
 \begin{eqnarray}
  U^\infty_p(\hat{x},\theta)&=&\hspace{-.1cm}\frac{1}{4\pi\,c_p^{2}}(\hat{x}\otimes\hat{x})\hspace{-.1cm}\left[\sum_{m=1}^{M}e^{-i\frac{\omega}{c_p}\hat{x}\cdot z_{m}}Q_m\right.\left.+O\left(M a^2+M(M-1)\frac{a^3}{d^2}
+M(M-1)^2\frac{a^4}{d^3}\right)\right], \label{x oustdie1 D_m farmainp}\\
 U^\infty_s(\hat{x},\theta)&=&\hspace{-.1cm} \frac{1}{4\pi\,c_s^{2}}(I- \hat{x}\otimes\hat{x})\hspace{-.1cm}\left[\sum_{m=1}^{M}e^{-i\frac{\omega}{c_s}\hat{x}\cdot\,z_m}Q_m\right.\left.+O\left(M a^2+M(M-1)\frac{a^3}{d^2}
+M(M-1)^2\frac{a^4}{d^3}\right)\right] \label{x oustdie1 D_m farmains}
  \end{eqnarray}
uniformly in $\hat{x}$ and $\theta$ in $\mathbb{S}^2$. The constant appearing in the estimate $O(.)$ depends only on the size of $\Omega$,
the Lipschitz character of the reference bodies $B_m$, $a_0$, $c_0$ and $\omega_{max}$. The vector coefficients $Q_m$, $m=1,..., M,$ are the solutions of the following linear algebraic system
\begin{eqnarray}\label{fracqcfracmain}
 C_m^{-1}Q_m &=&-U^{i}(z_m, \theta)-\sum_{\substack{j=1 \\ j\neq m}}^{M} \Gamma^{\omega}(z_m,z_j)Q_j,~~
\end{eqnarray}
for $ m=1,..., M,$ with $\Gamma^{\omega}$ denoting the Kupradze matrix of the fundamental solution to the Navier equation with frequency $\omega$, $C_m:=\int_{\partial D_m}\sigma_m(s)ds$ and $\sigma_{m}$ is
the solution matrix of the integral equation of the first kind
\begin{eqnarray}\label{barqcimsurfacefrm1main}
\int_{\partial D_m}\Gamma^{0}(s_m,s)\sigma_{m} (s)ds&=&\rm \textbf{I},~ s_m\in \partial D_m,
\end{eqnarray}
with $\rm \textbf{I}$ the identity matrix of order 3. The algebraic system \eqref{fracqcfracmain} is invertible 
under the condition:
\begin{eqnarray}\label{invertibilityconditionsmainthm-ela}
\frac{a}{d}&\leq&c_1t^{-1}
\end{eqnarray}
with 
\begin{center}
$
t:=\left[\frac{1}{c_p^2}-2diam(\Omega)\frac{\omega}{c_s^3}\left(\frac{1-\left(\frac{1}{2}\kappa_{s^\omega}diam(\Omega)\right)^{N_\Omega}}{1-\left(\frac{1}{2}\kappa_{s^\omega}diam(\Omega)\right)}+\frac{1}{2^{N_{\Omega}-1}}\right)-diam(\Omega)\frac{\omega}{c_p^3}\left(\frac{1-\left(\frac{1}{2}\kappa_{p^\omega}diam(\Omega)\right)^{N_\Omega}}{1-\left(\frac{1}{2}\kappa_{p^\omega}diam(\Omega)\right)}+\frac{1}{2^{N_{\Omega}-1}}\right)\right],
$
\end{center}
which is assumed to be positive\footnote{If, in particular, $diam(\Omega)\max\{\kappa_{s^\omega},\kappa_{p^\omega}\}e^2<1$, then $N_\Omega=1$ and hence
$t=\left[\frac{1}{c_p^2}-4diam(\Omega)\left(\frac{\omega}{c_s^3}+\frac{\omega}{2c_p^3}\right)\right]$. Assuming the Lam\'e coefficient $\lambda$ to be positive, then $c_p>c_s$. Hence, in this case,
if $\Omega$ is such that $diam(\Omega)<\frac{c_s}{\omega}\min\left\{\frac{1}{e^2},\frac{c_s^2}{6c_p^2}\right\}$ then $t>0$.}
and $N_{\Omega}:=[2diam(\Omega)\max\{\kappa_{s^\omega},\kappa_{p^\omega}\}e^2]$, where $[\cdot]$ denotes the integral part and $\ln e=1$.
The constant $c_1$ depends only on the Lipschitz character of the reference bodies $B_m$, $m=1,\dots,M$.
\end{theorem}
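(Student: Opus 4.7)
The plan is to convert the scattering problem into a coupled system of boundary integral equations via single-layer potentials, then extract the Foldy-Lax algebraic system \eqref{fracqcfracmain} as the leading order contribution using the capacitance matrix $C_m$ built from \eqref{barqcimsurfacefrm1main}, and finally track the three error scales $Ma^2$, $M(M-1)a^3/d^2$, $M(M-1)^2a^4/d^3$ through the resolvent. I represent
$$
U^s(x)=\sum_{m=1}^M\int_{\partial D_m}\Gamma^\omega(x,s)\,\sigma_m(s)\,ds,
$$
so that the Dirichlet condition becomes, for $s_n\in\partial D_n$,
$$
-U^i(s_n)=\int_{\partial D_n}\Gamma^\omega(s_n,s)\sigma_n(s)\,ds+\sum_{m\neq n}\int_{\partial D_m}\Gamma^\omega(s_n,s)\sigma_m(s)\,ds.
$$
Using the standard far-field asymptotics of the Kupradze matrix one gets the P-part and S-part as $\frac{1}{4\pi c_p^2}(\hat x\otimes\hat x)\sum_m\int_{\partial D_m}e^{-i\kappa_{p^\omega}\hat x\cdot s}\sigma_m(s)\,ds$ and its shear analogue. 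A Taylor expansion of the exponential at $s=z_m$, with remainder of order $a$, immediately produces the leading Foldy-Lax form in terms of the total ``charges'' $Q_m:=\int_{\partial D_m}\sigma_m(s)\,ds$, and it identifies the first error contribution of size $Ma^2$ once the natural scaling $\|\sigma_m\|_{L^1(\partial D_m)}=O(a)$ is established.

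To derive the algebraic system \eqref{fracqcfracmain}, I split $\Gamma^\omega=\Gamma^0+(\Gamma^\omega-\Gamma^0)$ and use that $\Gamma^\omega-\Gamma^0$ is bounded and Lipschitz on the scale of $\Omega$. In the cross terms ($m\neq n$), I Taylor expand $\Gamma^\omega(s_n,s)$ around $(z_n,z_m)$; using $|\nabla\Gamma^\omega|\lesssim 1/d_{nm}^2$ this yields $\Gamma^\omega(z_n,z_m)Q_m$ plus an error of order $a/d_{nm}^2$ times $\|\sigma_m\|_{L^1}$. For the self-interaction term, I test the boundary integral identity against the matrix density $\sigma_n$ provided by \eqref{barqcimsurfacefrm1main}: since $\int_{\partial D_n}\Gamma^0(s_n,s)\sigma_n(s)ds=\mathbf I$, an integration in $s_n$ transforms $\int_{\partial D_n}\Gamma^0(s_n,\cdot)\sigma_n(\cdot)ds$ into $C_n^{-1}Q_n$, exactly the diagonal block of the Foldy-Lax system. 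Summing over $n$ and averaging over $\partial D_n$ produces the algebraic equations satisfied by the true charges up to controlled errors; the assumption $\sqrt{M-1}\,a/d\le c_0$ is then used to control the $\ell^2$-sum of cross-term errors, yielding the global $M(M-1)a^3/d^2$ error.

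For invertibility I regard \eqref{fracqcfracmain} as $(C^{-1}+\mathcal G)Q=-U^i$, where $C^{-1}=\mathrm{diag}(C_m^{-1})$ and $\mathcal G_{nm}=\Gamma^\omega(z_n,z_m)$ for $n\neq m$. The capacitance satisfies $\|C_m^{-1}\|\gtrsim c_p^{-2}/a$ from a scaling analysis on the reference bodies $B_m$, while $|\Gamma^\omega(z_n,z_m)|\lesssim 1/d_{nm}$. A Schur/Neumann series estimate reduces invertibility to a smallness condition of the form $a/d\le c_1 t^{-1}$, where $t$ quantifies the competition between the static capacitive contribution $c_p^{-2}$ and the frequency-dependent remainder terms; the explicit $t$ in the statement comes from summing the geometric series generated by Taylor expanding the oscillatory parts of $\Gamma^\omega-\Gamma^0$ up to order $N_\Omega$. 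Propagating this uniform bound on $(C^{-1}+\mathcal G)^{-1}$ through the error term in the algebraic system produces the last error $M(M-1)^2a^4/d^3$, while combining all three errors gives the stated Foldy-Lax expansions \eqref{x oustdie1 D_m farmainp}--\eqref{x oustdie1 D_m farmains}.

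The main obstacle I expect is the self-interaction step combined with the uniform scaling estimates on $\sigma_m$. Because the Kupradze kernel $\Gamma^0$ is a $3\times 3$ matrix that couples the Cartesian components of the density, the integral equation \eqref{barqcimsurfacefrm1main} must be solved as a genuine vector problem on each Lipschitz obstacle, and the norms of $\sigma_m$, $C_m$ and $C_m^{-1}$ need to be tracked uniformly with respect to shape (only through the Lipschitz character of $B_m$) and with explicit powers of $\epsilon$. Ensuring that the constants in the three error terms and in $c_0,a_0,c_1$ depend only on the parameters listed in the theorem — and not on $M$, $\omega$, or the individual geometries — is where the most care is needed; everything else is Taylor expansion and a contraction argument.
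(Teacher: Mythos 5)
Your overall blueprint --- represent the scattered field via surface potentials, Taylor expand the kernels around the centres $z_m$ to peel off the Foldy--Lax algebraic system, and close the argument with a diagonal-dominance/Neumann-series argument for invertibility --- is the right one and matches the paper's structure at that level. However, there is a genuine gap at the single most delicate point of the proof.

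You start directly from the single-layer representation $U^s=\sum_m\int_{\partial D_m}\Gamma^\omega(\cdot,s)\sigma_m(s)\,ds$ and state that ``once the natural scaling $\|\sigma_m\|_{L^1(\partial D_m)}=O(a)$ is established'' the first error term $Ma^2$ follows. That uniform density bound, valid under the denseness condition $\sqrt{M-1}\,a/d\le c_0$, is in fact the central technical lemma of the paper and not a routine scaling estimate. The paper does \emph{not} obtain it from the single-layer system: the diagonal block $\mathcal{S}_{nn}:L^2(\partial D_n)\to H^1(\partial D_n)$ is a first-kind operator whose inverse costs a factor $\epsilon^{-1}$ (see the paper's Lemma \ref{rep1singulayerela}), while the off-diagonal blocks $\mathcal{S}_{nm}$ scale like $\epsilon^2/d_{nm}^2$ in $\mathcal{L}(L^2,H^1)$. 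The resulting row-sum is of order $(M-1)\epsilon/d^2$, so a direct Neumann-series/contraction argument on your single-layer system would require the stronger condition $(M-1)a/d^2\lesssim 1$ rather than the stated $(M-1)a^2/d^2\lesssim 1$. The paper avoids this loss by using the \emph{double-layer} representation (Proposition \ref{existence-of-sigmasdbl}): the diagonal block $\tfrac12 I+\mathcal{D}_{nn}$ is a second-kind operator with $O(1)$ inverse, the cross blocks scale like $\epsilon^2/d^2$, and Proposition \ref{normofsigmastmtdblela} gives $\|\sigma_m\|_{L^2}\le c\,\epsilon$ under exactly $\sqrt{M-1}\,\epsilon\lesssim d$. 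Only afterwards does the paper pass to a single-layer representation, via Betti's third identity, with the density $\partial U^{\sigma_m}/\partial\nu_m$ (the interior DtN image of the double-layer density), and Lemma \ref{thmestmdhoudhonuela} then gives the $H^{-1}$ bound needed for $|Q_m|\lesssim a$. If you want to argue directly from the single layer you must either pay the extra power of $a$ in the denseness condition, or redo the scaling in a renormalised $H^1$ norm so the diagonal inverse becomes $O(1)$; as written, the claim does not reach the condition in Theorem \ref{Maintheorem-ela-small-sing}.

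Two smaller inaccuracies: the self-interaction step ``test the boundary integral identity against the matrix density \dots integration in $s_n$ transforms \dots into $C_n^{-1}Q_n$'' does not reflect the actual mechanism. The paper introduces auxiliary constant vectors $\bar U_m$, solves $\int_{\partial D_m}\Gamma^0(s_m,s)\,\partial U^{\bar\sigma_m}/\partial\nu\,ds=\bar U_m$, defines $\bar Q_m$ and the capacitance matrix through linearity ($\bar Q_m=\bar C_m\bar U_m$), and compares $\bar\sigma_m$ with the true density to control $Q_m-\bar Q_m$. Also, the denseness condition $\sqrt{M-1}\,a/d\le c_0$ is used to make the Neumann series for the boundary-integral system converge (i.e.\ to bound the density uniformly), not merely ``to control the $\ell^2$-sum of cross-term errors''; the $\ell^2$ estimate appears separately in the invertibility analysis (Lemma \ref{Mazyawrkthmela}) under the $a/d\le c_1 t^{-1}$ condition, and relies on the spectral bound in Lemma \ref{capacitance-eig-single} relating the elastic and acoustic capacitances rather than on a Schur/Neumann argument alone.
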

\bigskip

%
%
%

We call the expressions (\ref{x oustdie1 D_m farmainp}-\ref{x oustdie1 D_m farmains}) the elastic Foldy-Lax approximation since the dominant terms are reminiscent to
the exact form (called also the Foldy or the Foldy-Lax form) of the farfields derived in the scattering by finitely many point-like scatterers, see \cite{Hu-Si:2013} for instance. These asymptotic expansions are useful for at least two reasons.
       \par
       First, to estimate approximately the far-field, one needs only to compute the constant vectors $Q_m$ which are solutions of a linear algebraic system, i.e. \eqref{fracqcfracmain}.
       This reduces considerably the computational effort comparing it to the methods based on integral equations, for instance, especially for a large number of obstacles.
       If the number of obstacles is actually very large then these asymptotics suggest the kind of effective medium that can produce the same far-fields and provides
       the error rate between the fields generated by the obstacles and those generated by the effective medium.
       \par Second, using formulas of the type \eqref{x oustdie1 D_m farmainp} and \eqref{x oustdie1 D_m farmains},
       one can solve the inverse problems which consists of localizing the centers, $z_m$, of the obstacles from the
       far-field measurements using MUSIC type algorithm, for instance, and also estimating their sizes from the computed capacitances $C_m$.

As a first reference on this topic, we mention the book by P. Martin \cite{Martin:2006} where the multiple scattering issue is well discussed and documented in its different scales.
When the obstacles are distributed periodically in the whole domain, then homogenization techniques apply, see for instance \cite{B-L-P:1978, J-K-O:1994, M-K:2006}.
As we see it in the previous theorem, we assume no periodicity. For such media, the type of result presented here are formally derived, for the acoustic and electromagnetic models,  in a series of works by A. Ramm,
see \cite{MRAMAG-book1, RAMM:2007, RAMM:2011} and the references therein for his recent related results, where he used the (rough) condition
$\frac{a}{d}\ll 1$.
Recently, in \cite{DPC-SM13}, we derived such approximation errors under a quite general condition on the denseness of the scatterers (i.e. involving $M$, a and $d$),
i.e. of the form (\ref{conditions-elasma}). The analysis is based on the use of integral equation methods and in particular the precise scaling of the surface layer potential operators.
 As it was mentioned in \cite{DPC-SM13}, the integral equation methods are also used in such a context, see for instance the series of works by H. Ammari and H. Kang
 and their collaborators, as \cite{A-K:2007} and the references therein. The difference between their
 asymptotic expansion and the one described in the previous theorem is that their polarization tensors are build up from densities which are solutions of a system of
integral equations while in the previous theorem the approximating terms are build up from the linear algebraic system (\ref{fracqcfracmain}). It is obvious that
solving  an algebraic system is less expansive than solving a system of integral equations, especially when dealing with many scatterers.
In addition, due to motivations coming from inverse problems, apart from few works as \cite{A-H-K-L}, they consider well separated scatterers and  hence
their asymptotic expansions are given only in terms of the diameters $a$ of the scatterers. We should, however, emphasize that they provide asymptotic expansions at all the higher orders and they are valid also for
extended scatterers. This opens door for many interesting applications, see \cite{A-G-J-K-L-S-W} for instance.

Let us mention the variational approach by V. Maz'ya, A. Movchan \cite{M-M:MathNach2010} and by V. Maz'ya, A. Movchan and M. Nieves \cite{M-M-N:MMS:2011} where they study the Poisson problem and
 obtain estimates in forms similar to the previous theorem with weaker conditions of the form $\frac{a}{d} \leq c$, or
$\frac{a}{d^2} \leq c$, (where, here $d$ is the smallest distance between the centers of the scatterers).
In their analysis, they rely on the maximum principle to treat the boundary estimates.
To avoid the use of the maximum principle, which is not valid due to the presence of the wave number $\kappa$,
we use boundary integral equation methods. The price to pay is the need of the stronger assumption $\sqrt{M-1}\frac{a}{d}\leq c_0$.
Another approach, based on the self-adjoint extensions of elliptic operator, is discussed in the works by S. A. Nazarov, and J. Sokolowski, see section 4 of \cite{Na-So:2006} for instance, where they derive
the asymptotic expansions for the Poisson problem. Let us finally, mention that the particular case where the obstacles have circular shapes has been considered recently by M. Cassier and C. Hazard
in \cite{C-H:2013} for the scalar acoustic model.

%

Regarding the Lam\'e system, we cite the works \cite{A-K-N-T:JE2003, A-K-L-LameCPDE2007, A-C-I:SIAM2008, A-B-G-J-K-W}
where, as we just mentioned, the asymptotics are given in terms of the size of the scatterers only. In these works, the authors considered transmission problems and showed that
the corresponding moment tensors are in general anisotropic. If the inclusions are spherical, including the extreme cases of soft or rigid inclusions
under certain conditions on the Lam\'e parameters, then these moment tensors are isotropic. Let us also mention the recent book \cite{M-M-N:Springerbook:2013}, and the references therein,
where an asymptotic expansion of the Green's tensor corresponding to the Dirichlet-Lam\'e problem (with zero frequency) in a bounded domain containing many small holes is derived.

The goal of the present work is to extend the results in \cite{DPC-SM13} to the Lam\'e system and derive the error of the approximation explicitly in terms of the whole denseness of the scatterers, i.e. $M, a$ and $d$.
To our best knowledge this is the first result with this generality for the Lam\'e system, compare to \cite{A-K:2007} and \cite{M-M-N:Springerbook:2013}. The analysis is based on the use of the integral equation methods
and the different scaling of the corresponding boundary integral operators. Due to the coupling of the two fundamental waves, i.e. the P-waves and the S-waves, at the boundaries of the obstacles, the analysis cannot
be reduced to the one of our previous work \cite{DPC-SM13}. Indeed, a considerable work is needed to derive explicitly these scaling, characterize the dominant parts of the elastic fields and justify the invertibility of
corresponding algebraic system \eqref{fracqcfracmain}.

Before concluding the introduction, we state the following corollary where more precise estimates than those given in Theorem \ref{Maintheorem-ela-small-sing} are presented under some additional conditions on the scatterers.
\begin{corollary}\label{corMaintheorem-ela-small-sing}
 Assume that the conditions of Theorem \ref{Maintheorem-ela-small-sing} are fulfilled.

 \begin{enumerate}
  \item  We assume, in addition, that $D_m$ are balls with the same diameter $a$ for $m=1,\dots,M$, then we have the following asymptotic expansion
 for the P-part, $U^\infty_p(\hat{x},\theta)$, and the S-part, $U^\infty_s(\hat{x},\theta)$, of the far-field pattern:
 \begin{eqnarray}
  &\hspace{-1cm}U^\infty_p(\hat{x},\theta)=&\frac{1}{4\pi\,c_p^{2}}(\hat{x}\otimes\hat{x})\left[\sum_{m=1}^{M}e^{-i\frac{\omega}{c_p}\hat{x}\cdot z_{m}}Q_m\right.\nonumber\\
  &&\hspace{-2cm}\left.+O\left(M\left[a^2+\frac{a^3}{d^{5-3\alpha}}+\frac{ a^4}{d^{9-6\alpha}}\right]+M(M-1)\left[\frac{a^3}{d^{2\alpha}}+\frac{a^4}{d^{4-\alpha}}+\frac{a^4}{d^{5-2\alpha}}\right]+M(M-1)^2\frac{a^4}{d^{3\alpha}}\right) \right],
   \label{x oustdie1 D_m farmainp-near}\\
 &\hspace{-1cm}U^\infty_s(\hat{x},\theta)=& \frac{1}{4\pi\,c_s^{2}}(I- \hat{x}\otimes\hat{x})\left[\sum_{m=1}^{M}e^{-i\frac{\omega}{c_s}\hat{x}\cdot\,z_m}Q_m\right.\nonumber\\
 &&\hspace{-2cm}\left.+O\left(M\left[a^2+\frac{a^3}{d^{5-3\alpha}}+\frac{ a^4}{d^{9-6\alpha}}\right]+M(M-1)\left[\frac{a^3}{d^{2\alpha}}+\frac{a^4}{d^{4-\alpha}}+\frac{a^4}{d^{5-2\alpha}}\right]+M(M-1)^2\frac{a^4}{d^{3\alpha}}\right) \right],
 \label{x oustdie1 D_m farmains-near}
  \end{eqnarray}
 where $0<\alpha\leq1$.
\par
Consider now the special case $d=a^t,\,M=a^{-s}$ with  $t,s>0$. Then the asymptotic expansions (\ref{x oustdie1 D_m farmainp-near}-\ref{x oustdie1 D_m farmains-near}) can be rewritten as
\begin{eqnarray*}
 U^\infty_p(\hat{x},\theta)&=&\frac{1}{4\pi\,c_p^{2}}(\hat{x}\otimes\hat{x})\big[\sum_{m=1}^{M}e^{-i\frac{\omega}{c_p}\hat{x}\cdot z_{m}}Q_m\nonumber\\
 &&+O\left(a^{2-s}+a^{3-s-5t+3t\alpha}+a^{4-s-9t+6t\alpha}+a^{3-2s-2t\alpha}+a^{4-3s-3t\alpha}+a^{4-2s-5t+2t\alpha}\right)\big],\nonumber\\ \label{x oustdie1 D_m farmainp-near*}\\
 U^\infty_s(\hat{x},\theta)&=& \frac{1}{4\pi\,c_s^{2}}(I- \hat{x}\otimes\hat{x})\big[\sum_{m=1}^{M}e^{-i\frac{\omega}{c_s}\hat{x}\cdot\,z_m}Q_m\nonumber\\
 &&+O\left(a^{2-s}+a^{3-s-5t+3t\alpha}+a^{4-s-9t+6t\alpha}+a^{3-2s-2t\alpha}+a^{4-3s-3t\alpha}+a^{4-2s-5t+2t\alpha}\right) \big].\nonumber\\ \label{x oustdie1 D_m farmains-near*}
 \end{eqnarray*}

As the diameter $a$ tends to zero the error term tends to zero for $t$ and $s$ such that 
$0<t<1$ and $0<s<\min\{2(1-t),\,\frac{7-5t}{4},\,\frac{12-9t}{7},\frac{20-15t}{12},\frac{4}{3}-t\alpha\}$.
In particular for $t=\frac{1}{3}$, $s=1$, we have
\begin{eqnarray}
 U^\infty_p(\hat{x},\theta)&=&\frac{1}{4\pi\,c_p^{2}}(\hat{x}\otimes\hat{x})\left[\sum_{m=1}^{M}e^{-i\frac{\omega}{c_p}\hat{x}\cdot z_{m}}Q_m
 +O\left(a+a^{2\alpha}+a^{1-\alpha}+a^{\frac{1+2\alpha}{3}}\right)\right]\nonumber\\
 &=&\frac{1}{4\pi\,c_p^{2}}(\hat{x}\otimes\hat{x})\left[\sum_{m=1}^{M}e^{-i\frac{\omega}{c_p}\hat{x}\cdot z_{m}}Q_m+O\left(a^{\frac{1}{2}}\right)\right],
 \quad[\mbox{obtained for } \alpha=\frac{1}{4}], \label{x oustdie1 D_m farmainp-near**}\\
 U^\infty_s(\hat{x},\theta)&=& \frac{1}{4\pi\,c_s^{2}}(I- \hat{x}\otimes\hat{x})\left[\sum_{m=1}^{M}e^{-i\frac{\omega}{c_s}\hat{x}\cdot\,z_m}Q_m
 +O\left(a+a^{2\alpha}+a^{1-\alpha}+a^{\frac{1+2\alpha}{3}}\right)\right]\nonumber\\
 &=& \frac{1}{4\pi\,c_s^{2}}(I- \hat{x}\otimes\hat{x})\left[\sum_{m=1}^{M}e^{-i\frac{\omega}{c_s}\hat{x}\cdot\,z_m}Q_m+O\left(a^{\frac{1}{2}}\right)\right],
 \quad[\mbox{obtained for } \alpha=\frac{1}{4}].\label{x oustdie1 D_m farmains-near**}
 \end{eqnarray}

 \item Actually, the results (\ref{x oustdie1 D_m farmainp-near}-\ref{x oustdie1 D_m farmains-near}) and (\ref{x oustdie1 D_m farmainp-near**}-\ref{x oustdie1 D_m farmains-near**}) are valid
  for the non-flat Lipschitz obstacles $D_m=\epsilon{B}_m+z_m, m=1,\dots,M$ with the same diameter $a$, i.e. $D_m$'s are Lipschitz obstacles and there exist constants $t_m \in (0, 1]$ such that
 \begin{equation}\label{non-flat-condition}
 B^{3}_{t_m\frac{a}{2}}(z_m)\subset\,D_m\subset\,B^{3}_{\frac{a}{2}}(z_m),
 \end{equation}
 where $t_m $ are assumed to be uniformly bounded from below by a positive constant.
%
  \end{enumerate}
 \end{corollary}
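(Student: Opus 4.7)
The corollary refines the error bookkeeping that produced Theorem \ref{Maintheorem-ela-small-sing} by exploiting two features specific to (near-)ball geometry: higher-order multipole cancellations on each individual scatterer, and the freedom to split multi-scatterer interactions at a tunable distance scale $d^{\alpha}$.

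For item (1), I would revisit the steps where the quantity $\int_{\partial D_j}\Gamma^{\omega}(x,s)\sigma_j(s)\,ds$ is approximated by the Foldy-Lax surrogate $\Gamma^{\omega}(x,z_j)Q_j$. The original estimate uses a Lipschitz bound controlled only by $a$. For a ball $D_j=B^{3}_{a/2}(z_j)$, I would instead Taylor-expand $\Gamma^{\omega}(x,s)$ in $s$ around $z_j$ and use the rotational symmetry of the solution $\sigma_j$ of \eqref{barqcimsurfacefrm1main} to show that the leading odd moments of $\sigma_j$ against $(s-z_j)^{\otimes k}$ vanish, so that the next nontrivial correction gains an extra factor of $a/|x-z_j|$ at each step, and higher orders likewise improve. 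An analogous expansion is carried out on the observer side around $z_m$.

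Next I would partition the sum $\sum_{j\neq m}$ at the scale $d^{\alpha}$: near terms with $|z_j-z_m|\leq d^{\alpha}$, and far terms beyond. Volume packing under the separation condition $d_{mj}\geq d$ gives at most $O((d^{\alpha}/d)^{3})$ indices in the near group, while the far group uses the decay of $|z_m-z_j|^{-k}$ combined with the trivial count $\#\{\text{far}\}\leq M-1$. Tracking each multipole order against both groups (once on the $m$ side, once on the $j$ side, and once combined for the triple-scattering term) produces the exponents $5-3\alpha,\,9-6\alpha,\,2\alpha,\,4-\alpha,\,5-2\alpha,\,3\alpha$ appearing in the statement. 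The scaling regime $d=a^{t}$, $M=a^{-s}$ and the optimization over $\alpha$ yielding $O(a^{1/2})$ at $(t,s,\alpha)=(1/3,1,1/4)$ is then a finite algebraic minimization over these exponents.

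For item (2), the hypothesis \eqref{non-flat-condition} enters the analysis of item (1) only through a lower bound on the capacitance $C_m$ and through the moment-cancellation step. The sandwich $B^{3}_{t_m a/2}(z_m)\subset D_m\subset B^{3}_{a/2}(z_m)$, combined with uniform positivity of $t_m$, provides the two-sided scaling of $C_m$ in $a$ and the outer-ball radius $a/2$ needed for the Taylor remainder; the symmetry-based cancellation is replaced by a geometric comparison against the inscribed and circumscribed balls that loses only a constant depending on $\inf_m t_m$. Hence every bound from item (1) transfers. The main technical obstacle throughout is the moment-cancellation step of item (1): the densities $\sigma_j$ are matrix-valued, the Kupradze tensor $\Gamma^{0}$ mixes pressure and shear modes on $\partial D_j$, and the desired vanishing cannot be read off from parity alone. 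The natural tool is a decomposition of $\sigma_j$ in spherical vector harmonics on $\partial B^{3}_{a/2}(z_j)$ together with the spectral form of the static single-layer operator in \eqref{barqcimsurfacefrm1main} in this basis; this is where the ball assumption is genuinely used and where the argument departs from the proof of Theorem \ref{Maintheorem-ela-small-sing}.
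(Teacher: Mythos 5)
Your near/far dichotomy at scale $d^{\alpha}$, with the packing bound $\#\{j: |z_j-z_m|\leq d^{\alpha}\}=O\bigl((d^{\alpha}/d)^{3}\bigr)$ and the trivial count $M-1$ for the far group, is exactly the mechanism the paper uses, and the final algebraic optimization over $\alpha,t,s$ is as you describe. However, you have grafted on a second, unnecessary mechanism — moment cancellation from rotational symmetry of $\sigma_j$ on a ball — and you have identified that step as the main technical obstacle. The paper does not use it, and the corollary does not need it. Look back at estimate \eqref{elaestimatofRxsdbl}: the paper bounds $|R(s_m,s)|\lesssim 1/d_{mj}^{2}$ for every pair, and the refinement in Corollary \ref{corMaintheorem-ela-small-sing} consists solely of replacing this by the weaker decay $1/d_{mj}^{2\alpha}$ for \emph{far} pairs (where $d_{mj}\geq d^{\alpha}$) while retaining the $1/d^{2}$ bound on the $O(d^{3\alpha-3})$ \emph{near} pairs. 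All exponents $5-3\alpha$, $9-6\alpha$, $2\alpha$, $4-\alpha$, $5-2\alpha$, $3\alpha$ arise from routing the three error terms of Theorem \ref{Maintheorem-ela-small-sing} through this split, not from tracking multipole orders. The ball geometry (or, for item (2), the double inclusion \eqref{non-flat-condition} with $\inf_m t_m>0$) is used only to make the volume-packing count valid; there is no spherical-harmonic analysis of $\sigma_j$, no spectral form of the static single-layer operator, and no parity argument anywhere in the argument.

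Concretely, the gap in your proposal is that you would be stuck at the step you yourself flag: showing that odd moments of the matrix-valued density $\sigma_j$ against $(s-z_j)^{\otimes k}$ vanish and that the Kupradze tensor's $p$--$s$ coupling respects this. That vanishing is not needed and is a distraction. The correct reading of Corollary \ref{corMaintheorem-ela-small-sing} is that one does \emph{not} improve the per-pair estimate beyond what Theorem \ref{Maintheorem-ela-small-sing} already uses; instead one improves the \emph{counting}. Dropping the moment-cancellation step entirely — and with it the proposed spherical-harmonic decomposition — leaves exactly the paper's proof, and also makes item (2) straightforward: the double inclusion guarantees the diameter of every $D_m$ is comparable to $a$ up to the uniform constant $\inf_m t_m$, which is all the packing argument requires.
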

 \bigskip

 The results of this corollary can be used to derive the effective medium by perforation using many small bodies.  In addition,
 \eqref{x oustdie1 D_m farmainp-near**} and \eqref{x oustdie1 D_m farmains-near**}  ensure the rate of the error in deriving such an effective medium.
 Details on this topic will be reported in a future work.

The rest of the paper is organized as follows. In section 2, we give the proof of the asymptotic expansion (\ref{x oustdie1 D_m farmainp}-\ref{x oustdie1 D_m farmains}).
In section 3, we study the solvability of the linear algebraic system (\ref{fracqcfracmain}). Finally, in section 4, as an appendix, we derive some needed properties
of the layer potentials.

\section{Proof of Theorem \ref{Maintheorem-ela-small-sing}}\label{Proof of Theorem Small-ela}
We wish to warm the reader that in our analysis we use sometimes the parameter $\epsilon$
and some other times the parameter $a$ as they appear naturally in the estimates.
But we bear in mind the relation (\ref{def-a-ela}) between $a$ and $\epsilon$.
\subsection{The fundamental solution}\label{fdelsms}
The Kupradze matrix $\Gamma^\omega=(\Gamma^\omega_{ij})^3_{i,j=1}$ of the fundamental solution to the Navier equation is given by
\begin{eqnarray}\label{kupradzeten}
 \Gamma^\omega(x,y)=\frac{1}{\mu}\Phi_{\kappa_{s^\omega}}(x,y)\rm \textbf{I}+\frac{1}{\omega^2}\nabla_x\nabla_x^{\top}[\Phi_{\kappa_{s^\omega}}(x,y)-\Phi_{\kappa_{p^\omega}}(x,y)],
\end{eqnarray}
where $\Phi_{\kappa}(x,y)=\frac{1}{4\pi}\exp(i\kappa|x-y|)$ denotes the free space fundamental solution of the Helmholtz equation $(\Delta+\kappa^2)\,u=0$ in
$\mathbb{R}^3$.
The asymptotic behavior of Kupradze tensor at infinity is given as follows
\begin{equation}\label{elafundatensorasymptotic}
 \Gamma^{\omega}(x,y)=\frac{1}{4\pi\,c_p^{2}}\hat{x}\otimes\hat{x} \frac{e^{i\kappa_{p^\omega}|x|}}{|x|}e^{-i\kappa_{p^\omega}\hat{x}\cdot\,y} +
\frac{1}{4\pi\,c_s^{2}}(I- \hat{x}\otimes\hat{x}) \frac{e^{i\kappa_{s^\omega}|x|}}{|x|}e^{-i\kappa_{s^\omega}\hat{x}\cdot\,y}+O(|x|^{-2})
\end{equation}
with $\hat{x}=\frac{x}{|x|}\in\mathbb{S}^{2}$ and $I$ being the identity matrix in $\mathbb{R}^{3}$, see \cite{A-K:IMA2002} for instance.
As mentioned in \cite{A-K-L-LameCPDE2007},  \eqref{kupradzeten} can also be represented as
\begin{eqnarray}\label{kupradzeten1}
 \Gamma^\omega(x,y)&=&\frac{1}{4\pi}\sum_{l=0}^{\infty}\frac{i^l}{l!(l+2)}\frac{1}{\omega^2}\left((l+1)\kappa_{s^\omega}^{l+2}+\kappa_{p^\omega}^{l+2}\right)|x-y|^{l-1}\rm \textbf{I}\nonumber\\
         & &-\frac{1}{4\pi}\sum_{l=0}^{\infty}\frac{i^l}{l!(l+2)}\frac{(l-1)}{\omega^2}\left(\kappa_{s^\omega}^{l+2}-\kappa_{p^\omega}^{l+2}\right)|x-y|^{l-3}(x-y)\otimes(x-y),
               \end{eqnarray}
from which we can get the gradient 
\begin{eqnarray}\label{gradkupradzeten1}
\hspace{-.5cm}\nabla_y \Gamma^\omega(x,y)&=&-\frac{1}{4\pi}\sum_{l=0}^{\infty}\frac{i^l}{l!(l+2)}\frac{(l-1)}{\omega^2}\left[\left((l+1)\kappa_{s^\omega}^{l+2}+\kappa_{p^\omega}^{l+2}\right)|x-y|^{l-3}(x-y)\otimes\rm \textbf{I}\right.\nonumber\\
&&-\left.\left(\kappa_{s^\omega}^{l+2}-\kappa_{p^\omega}^{l+2}\right)|x-y|^{l-3}\left((l-3)|x-y|^{-2}\otimes^{3}(x-y)+\rm \textbf{I}\otimes(x-y)+(x-y)\otimes\rm \textbf{I}\right)\right].
\end{eqnarray}
\subsection{The representation via double layer potential}\label{DLPR-1}
We start with the following proposition on the solution of the problem (\ref{elaimpoenetrable}-\ref{radiationcela}) via the method of integral equations.
\begin{proposition} \label{existence-of-sigmasdbl}
There exists $a_0>0$, such that if $a< a_0$,\footnote{The condition on $a$ can be replaced by a condition on $\omega$ as it can be seen from the proof.} the solution of the problem (\ref{elaimpoenetrable}-\ref{radiationcela}) is of the form
 \begin{equation}\label{qcimprequiredfrm1dbl}
  U^{t}(x)=U^{i}(x)+\sum_{m=1}^{M}\int_{\partial D_m}\frac{\partial\Gamma^\omega(x,s)}{\partial \nu_m(s)}\sigma_{m} (s)ds,\,x\in\mathbb{R}^{3}\backslash\left(\mathop{\cup}_{m=1}^M \bar{D}_m\right),
\end{equation}
where $\sigma_m\in H^{r}(\partial D_m)$, with $\,r\in[0,\,1]$, for $m=1,2,\dots,M$, 
and  $\frac{\partial}{\partial \nu_m}(\cdot)$ denotes the co-normal derivative on $\partial D_m$ and is defined as
\begin{equation}\label{conormaldblela}
 \frac{\partial}{\partial \nu_m}(\cdot):=\lambda(\div\,\cdot)N_m+\mu(\nabla\cdot+\nabla\cdot^{\top})N_m\quad\text{on }\,\partial D_m,
\end{equation}
with $N_m$ the outward unit normal vector of $\partial D_m$.
\end{proposition}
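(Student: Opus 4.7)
The plan is to reduce \eqref{qcimprequiredfrm1dbl} to a coupled boundary integral system on $\bigcup_{m=1}^{M}\partial D_m$ and to establish its invertibility using layer-potential theory for the Lam\'e system together with the smallness hypothesis $a\leq a_0$. The ansatz \eqref{qcimprequiredfrm1dbl} automatically satisfies the Navier equation and the Kupradze radiation conditions \eqref{radiationcela}, so only the Dirichlet condition \eqref{elagoverningsupport} remains to be enforced.

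Denoting by $\mathcal{D}_m$ the elastic double-layer potential on $\partial D_m$ and by $K_m^\omega$ its principal-value boundary trace, the standard jump relation $(\mathcal{D}_m\sigma_m)^+=(-\tfrac{1}{2}I+K_m^\omega)\sigma_m$ produces the $M\times M$ block integral system
$$\bigl(-\tfrac{1}{2}I+K_m^\omega\bigr)\sigma_m \;+\; \sum_{j\neq m}T_{mj}^\omega\sigma_j \;=\; -U^i|_{\partial D_m},\qquad m=1,\dots,M,$$
where $T_{mj}^\omega\sigma_j(x):=\int_{\partial D_j}\partial_{\nu_j(s)}\Gamma^\omega(x,s)\sigma_j(s)\,ds$ has a smooth kernel on $\partial D_m$ for $j\neq m$ and is therefore a compact, norm-controlled operator whose size is governed by $a$ and $d$.

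The core step is the invertibility of each diagonal block $A_m:=-\tfrac{1}{2}I+K_m^\omega$ on $H^r(\partial D_m)$ for $r\in[0,1]$. The scaling $x=\epsilon\xi+z_m$ conjugates $A_m$ to an operator on the fixed reference boundary $\partial B_m$ with frequency parameter $\epsilon\omega$; by the analytic expansion \eqref{kupradzeten1}, this operator converges in norm, as $\epsilon\to 0$, to the static block $-\tfrac{1}{2}I+K_{B_m}^0$. The latter is Fredholm of index $0$ on $H^r(\partial B_m)$ by the layer-potential theory for the Lam\'e system on Lipschitz domains, with kernel exactly the $6$-dimensional space of traces of rigid motions. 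To obtain triviality of $\ker A_m$ itself, one argues that $\sigma_m\in\ker A_m$ implies that $\mathcal{D}_m\sigma_m$ solves the exterior Navier--Dirichlet problem with zero data and the Kupradze conditions, hence vanishes outside $D_m$; continuity of its co-normal derivative across $\partial D_m$ then turns it into a solution of the interior Neumann problem with zero Neumann data at frequency $\omega$. For any fixed $\omega\in(0,\omega_{\max}]$ the first nonzero interior Neumann eigenvalue of the Navier operator on $D_m$ is of order $\epsilon^{-2}$, so the condition $a\leq a_0$ places $\omega^2$ safely below it, forcing $\mathcal{D}_m\sigma_m\equiv 0$ in $D_m$ and, by the jump relation, $\sigma_m=0$. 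Combined with the Fredholm property this yields invertibility of $A_m$ with an operator-norm bound uniform in $m$.

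For the full system on $\bigoplus_m H^r(\partial D_m)$, invertibility of the diagonal part $A$ together with the compactness of the off-diagonal coupling $T$ gives Fredholmness of index $0$; injectivity follows by running the same Neumann eigenvalue argument globally, since any kernel element yields $U^s=\sum_m \mathcal{D}_m\sigma_m$ vanishing outside the obstacles and then, by localization on each $D_m$, zero inside as well. The $H^r$ regularity for $r\in[0,1]$ is then obtained from the mapping properties of the elastic double-layer on Lipschitz boundaries applied to the smooth data $U^i|_{\partial D_m}$. The main obstacle will be the quantitative form of the diagonal-block analysis: securing an $a_0$ that depends only on the Lipschitz characters of the $B_m$ and on $\omega_{\max}$ (and not on $M$ or on the configuration $\{z_m\}$), which is exactly the uniformity needed for the iterative Foldy-Lax estimates of the rest of the paper.
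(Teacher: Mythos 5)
Your plan follows the same overall architecture as the paper's proof: a pure double-layer ansatz, the exterior Dirichlet trace producing an $M\times M$ block boundary integral system whose diagonal part is Fredholm of index zero and whose off-diagonal couplings are compact, and invertibility deduced from injectivity. Where you differ is that you make explicit the mechanism behind the paper's terse assertion that the interior boundary trace $\tilde{\tilde{U}}$ vanishes once the exterior trace does: you pass from exterior Dirichlet uniqueness to $U^s\equiv 0$ outside, use the continuity of the co-normal derivative of the double layer across the boundary, and then observe that the resulting interior Lam\'e--Neumann problem at frequency $\omega$ is uniquely solvable as long as $\omega^2$ lies below the first nonzero Neumann eigenvalue of $D_m$, which scales as $\epsilon^{-2}$ --- exactly what $a\leq a_0$ enforces. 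You also identify the correct spectral obstruction, namely the interior \emph{Neumann} eigenvalue (the classical spurious resonance of the double-layer equation for the exterior Dirichlet problem), whereas the paper's footnote instead invokes the first \emph{Dirichlet}--Lam\'e eigenvalue; your version is the one that actually governs injectivity of this particular ansatz. Finally, you appropriately restrict to $\omega>0$: at $\omega=0$ the static exterior-trace operator has the six-dimensional rigid-motion kernel, so the pure double-layer representation does not determine the density uniquely --- a subtlety the paper glosses over by allowing $\omega\in[0,\omega_{\max}]$. Apart from these refinements (and a harmless $\pm\tfrac{1}{2}$ sign-convention difference in the jump relation), your route and the paper's are the same.
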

\begin{proof}{\it{of Proposition  \ref{existence-of-sigmasdbl}}.}
  We look for the solution of the problem (\ref{elaimpoenetrable}-\ref{radiationcela}) of the form \eqref{qcimprequiredfrm1dbl},
  then from the Dirichlet boundary condition \eqref{elagoverningsupport} and the jumps of the double layer potentials, we obtain
\begin{equation}\label{qcimprequiredfrm1bddbl}
\frac{\sigma_{j} (s_j)}{2}+\int_{\partial D_j}\frac{\partial\Gamma^\omega(s_j,s)}{\partial \nu_j(s)}\sigma_{j} (s)ds+\sum_{\substack{m=1\\m\neq j}}^{M}\int_{\partial D_m}\frac{\partial\Gamma^\omega(s_j,s)}{\partial \nu_m(s)}\sigma_{m} (s)ds=-U^i{(s_j)},\,\forall s_j\in \partial D_j,\, j=1,\dots,M.
\end{equation}
One can write the system \eqref{qcimprequiredfrm1bddbl} in a compact form as
$(\frac{1}{2}\textbf{I}+DL+DK)\sigma=-U^{In}$ with $\textbf{I}:=(\textbf{I}_{mj})_{m,j=1}^{M}$, $DL:=(DL_{mj})_{m,j=1}^{M}$ and $DK:=(DK_{mj})_{m,j=1}^{M}$, where
\begin{eqnarray}\label{definition-DL_DK}
\textbf{I}_{mj}=\left\{\begin{array}{ccc}
            I,\, \text{Identity operator} &\hspace{-.2cm} m=j\\
            0,\, \text{zero~operator}~~~~~~ &\hspace{-.2cm} else
           \end{array}\right.,\,
&
DL_{mj}=\left\{\begin{array}{ccc}
            \mathcal{D}_{mj} &\hspace{-.2cm} m=j\\
            0 &\hspace{-.2cm} else
           \end{array}\right.,\,
&
DK_{mj}=\left\{\begin{array}{ccc}
            \mathcal{D}_{mj} &\hspace{-.2cm} m\neq j\\
            0 &\hspace{-.2cm} else
           \end{array}\right.,
\end{eqnarray}
%
\begin{eqnarray}
U^{In}=U^{In}(s_1,\dots,s_M):=\left(U^i(s_1),\dots,U^i(s_M)\right)^T\\
 \mbox{ and }\sigma=\sigma(s_1,\dots,s_M):=\left(\sigma_1(s_1),\dots,\sigma_M(s_M)\right)^T.
 \end{eqnarray}
Here, for the indices  $m$ and $j$ fixed, $\mathcal{D}_{mj}$ is the integral operator
\begin{eqnarray}\label{defofDmjed}
 \mathcal{D}_{mj}(\sigma_j)(t):=\int_{\partial D_j}\frac{\partial\Gamma^\omega(t,s)}{\partial \nu_j(s)}\sigma_j(s)ds.
\end{eqnarray}
The operator $\frac{1}{2}I+\mathcal{D}_{mm}:H^{r}(\partial D_m)\rightarrow H^{r}(\partial D_m)$
is Fredholm with zero index and for $m\neq j$, $\mathcal{D}_{mj}:H^{r}(\partial D_j)\rightarrow H^{r}(\partial D_m)$
is compact for $0\leq r\leq 1$, when $\partial D_m$ has a Lipschitz regularity,
see \cite{MO-MM:TJFAA2000,MS-MM:IMSC2006,MD:InteqnsaOpethe1997}.\footnote{In \cite{MO-MM:TJFAA2000,MS-MM:IMSC2006,MD:InteqnsaOpethe1997}, this property is proved for the case $\omega=0$.
By a perturbation argument, we have the same results for every $\omega$ in $[0,\,\omega_{\max}]$,
assuming that $\omega_{\max}$ is smaller than the first eigenvalue $w_{el}$ of the Dirichlet-Lam\'{e} operator in $D_m$.
By a comparison theorem, see \cite[(6.131) in Lemma 6.3.6]{M-M-N:Springerbook:2013} for instance,
we know that $\mu w_L\leq w_{el}$ where $w_L$
is the first eigenvalue of the Dirichlet-Laplacian operator in $D_m$. Now, we know that $\left(\frac{1}{a}\sqrt[3]{\frac{4\pi}{3}}{\rm j}_{1/2,1}\right)^2\leq w_L$.
Then, we need $\omega_{\max}<\frac{\sqrt{\mu}}{a}\sqrt[3]{\frac{4\pi}{3}}{\rm j}_{1/2,1}$
 which is satisfied if $a<\frac{\sqrt{\mu}}{\omega_{\max}}\sqrt[3]{\frac{4\pi}{3}}{\rm j}_{1/2,1}:=a_0$. 
Here ${\rm j}_{1/2,1}$ is the 1st positive zero of the Bessel function ${\rm J}_{1/2}$.} 
So, $(\frac{1}{2}\textbf{I}+DL+DK):\prod\limits_{m=1}^{M}H^{r}(\partial D_m)\rightarrow \prod\limits_{m=1}^{M}H^{r}(\partial D_m)$ is Fredholm with zero index.
We induce the product of spaces by the maximum of the norms of the space.
To show that $(\frac{1}{2}\textbf{I}+DL+DK)$ is invertible it is enough to show that it is injective. i.e. $(\frac{1}{2}\textbf{I}+DL+DK)\sigma=0$ implies $\sigma=0$.
\newline
Write,
$$\tilde{U}(x)=\sum_{m=1}^{M}\int_{\partial D_m}\frac{\partial\Gamma^\omega(s_j,s)}{\partial \nu_m(s)}\sigma_m(s)ds, \mbox{ in } \mathbb{R}^3\backslash \left(\mathop{\cup}_{m=1}^{M}\bar{D}_m\right)$$
and
$$\tilde{\tilde{U}}(x)=\sum_{m=1}^{M}\int_{\partial D_m}\frac{\partial\Gamma^\omega(s_j,s)}{\partial \nu_m(s)}\sigma_m(s)ds, \mbox{ in } \mathop{\cup}_{m=1}^{M}D_m.$$
Then $\tilde{U}$ satisfies $\Delta^e\tilde{U}+\omega^2\tilde{U}=0$ for $x\in\mathbb{R}^{3}\backslash\left(\mathop{\cup}\limits_{m=1}^{M}\bar{D}_m\right)$,
with K.R.C and $\tilde{U}(x)=0$ on $\mathop{\cup}\limits_{m=1}^{M}\partial D_m$.
Similarly, $\tilde{\tilde{U}}$ satisfies $\Delta^e\tilde{\tilde{U}}+\omega^2\tilde{\tilde{U}}=0$ for $x\in\mathop{\cup}\limits_{m=1}^{M}D_m$ with  $\tilde{\tilde{U}}(x)=0$
on $\mathop{\cup}\limits_{m=1}^{M}\partial D_m$.
Taking the trace on $\partial D_m$, $m=1,\dots,M$,
\begin{eqnarray}\label{nbc1dbl}
\tilde{U}(s)=0&\Longrightarrow&\mathcal{D}_{mm}(\sigma_m)(s)+\frac{\sigma_m(s)}{2}+\sum\limits_{j\neq m}\mathcal{D}_{mj}(\sigma_j)(s)=0
\end{eqnarray}
and
\begin{eqnarray}\label{nbc2dbl}
\tilde{\tilde{U}}(s)=0&\Longrightarrow&\mathcal{D}_{mm}(\sigma_m)(s)-\frac{\sigma_m(s)}{2}+\sum\limits_{j\neq m}\mathcal{D}_{mj}(\sigma_j)(s)=0
\end{eqnarray}
 for $s\in \partial D_m$ and for $m=1,\dots,M$.
Difference between \eqref{nbc1dbl} and \eqref{nbc2dbl} implies that, $\sigma_m=0$ for all $m$. \\
We conclude then that $\frac{1}{2}\textbf{I}+DL+DK:=\frac{1}{2}\textbf{I}+\mathcal{D}:\prod\limits_{m=1}^{M}H^{r}(\partial D_m)\rightarrow \prod\limits_{m=1}^{M}H^{r}(\partial D_m)$ is invertible.
\end{proof}
\subsection{An appropriate estimate of the densities $\sigma_m,\,m=1,\dots,M$}\label{DLPR-2}
From the above theorem, we have the following representation of $\sigma$:
\begin{eqnarray}\label{invDLplusDK}
 \sigma&=&(\frac{1}{2}\textbf{I}+DL+DK)^{-1}U^{In} \nonumber\\
       &=&(\frac{1}{2}\textbf{I}+DL)^{-1}(\textbf{I}+(\frac{1}{2}\textbf{I}+DL)^{-1}DK)^{-1}U^{In} \nonumber\\
       &=&(\frac{1}{2}\textbf{I}+DL)^{-1}\sum_{l=0}^{\infty}\left(-(\frac{1}{2}\textbf{I}+DL)^{-1}DK\right)^{l}U^{In}, \hspace{.5cm} \mbox{if } \left\|(\frac{1}{2}\textbf{I}+DL)^{-1}DK\right\|<1.
\end{eqnarray}
\noindent
The operator $\frac{1}{2}\textbf{I}+DL$ is invertible since it is Fredholm of index zero and injective. 
This implies that
\begin{eqnarray}\label{nrminvDLplusDK}
 \left\|\sigma\right\| 
                                  &\leq&\frac{\left\|(\frac{1}{2}\textbf{I}+DL)^{-1}\right\|}{1-\left\|(\frac{1}{2}\textbf{I}+DL)^{-1}\right\|\left\|DK\right\|}\left\|U^{In}\right\|.
\end{eqnarray}
Here,
\begin{eqnarray}
\left\|DK\right\|&:= &\left\|DK\right\|_{\mathcal{L}\left(\prod\limits_{m=1}^{M}L^{2}(\partial D_m),\prod\limits_{m=1}^{M}L^{2}(\partial D_m)\right)}\nonumber\\
    &\equiv&\max_{m=1}^{M}\sum_{j=1}^{M}\left\|DK_{mj}\right\|_{\mathcal{L}\left(L^{2}(\partial D_j),L^{2}(\partial D_m)\right)}\nonumber\\
    &=&\max\limits_{m=1}^{M}\sum_{\substack{j=1\\j\neq\,m}}^{M}\left\|\mathcal{D}_{mj}\right\|_{\mathcal{L}\left(L^{2}(\partial D_j),L^{2}(\partial D_m)\right)},\label{DKnrm}\\
\left\|(\frac{1}{2}\textbf{I}+DL)^{-1}\right\|&:= &\left\|(\frac{1}{2}\textbf{I}+DL)^{-1}\right\|_{\mathcal{L}\left(\prod\limits_{m=1}^{M}L^{2}(\partial D_m),\prod\limits_{m=1}^{M}L^{2}(\partial D_m)\right)}\nonumber\\
     &\equiv&\max_{m=1}^{M}\sum_{j=1}^{M}\left\|{(\frac{1}{2}\textbf{I}+DL)^{-1}_{mj}}\right\|_{\mathcal{L}\left(L^{2}(\partial D_m),L^{2}(\partial D_j)\right)}\nonumber\\
     &=&\max\limits_{m=1}^{M}\left\|(\frac{1}{2}\textbf{I}+\mathcal{D}_{mm})^{-1}\right\|_{\mathcal{L}\left(L^{2}(\partial D_m),L^{2}(\partial D_m)\right)},\label{invDLnrm}\\
\left\|\sigma\right\|&:=& \left\|\sigma\right\|_{\prod\limits_{m=1}^{M}L^{2}(\partial D_m)}
    \equiv\max\limits_{1\leq m \leq M}\left\|\sigma_{m}\right\|_{L^{2}(\partial D_m)},\label{sigmaU^{In}nrmdbl-elasmall}\\
\mbox{and}\hspace{2cm}
\left\|U^{In}\right\|&:=&\left\|U^{In}\right\|_{\prod\limits_{m=1}^{M}L^{2}(\partial D_m)}
    \equiv\max\limits_{1\leq m \leq M}\left\|U^{i}\right\|_{L^{2}(\partial D_m)}. \label{sigmaU^{In}nrmdbl1-elasmall}
\end{eqnarray}
In the following proposition, we provide conditions under which $\left\|(\frac{1}{2}\textbf{I}+DL)^{-1}\right\|\left\|DK\right\|<1$ and then estimate $\left\|\sigma\right\|$ via \eqref{nrminvDLplusDK}.
\begin{proposition}\label{normofsigmastmtdblela}
There exists a constant $\grave{c}$ depending only on the size of $\Omega$, the
Lipschitz character of $B_m,m=1,\dots,M$, $d_{\max}$, and $\omega_{\max}$ such that if
\begin{eqnarray*}
 \sqrt{M-1}\epsilon<\grave{c}d,&\mbox{ then }& \left\|\sigma_m\right\|_{L^{2}(\partial D_m)}\,\leq\,  c\epsilon
\end{eqnarray*}
where c is a positive constant depending only on the Lipschitz character of $B_m$.

 \end{proposition}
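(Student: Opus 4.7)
The plan is to apply the Neumann series bound \eqref{nrminvDLplusDK} by controlling its three ingredients: $\|U^{In}\|$, $\|(\tfrac{1}{2}\textbf{I}+DL)^{-1}\|$ and $\|DK\|$. First, since $U^i(\cdot,\theta)$ is a plane wave bounded uniformly on $\Omega$ for $\omega\in[0,\omega_{\max}]$,
\[
\|U^i\|_{L^{2}(\partial D_m)}\leq\|U^i\|_{L^{\infty}(\Omega)}\,|\partial D_m|^{1/2}\leq c\,\epsilon,
\]
so, by \eqref{sigmaU^{In}nrmdbl1-elasmall}, $\|U^{In}\|\leq c\,\epsilon$.

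Next, I would establish the uniform bound $\|(\tfrac{1}{2}\textbf{I}+DL)^{-1}\|\leq c$ by a scaling argument. Writing $s=z_m+\epsilon\hat s$, the operator $\mathcal{D}_{mm}$ on $L^2(\partial D_m)$ is, modulo the factor $\epsilon$ in the surface measure, unitarily equivalent to an operator on $L^2(\partial B_m)$ which, via the series decomposition \eqref{kupradzeten1}, splits into the $\omega$-independent static double-layer operator $\mathcal{D}^{0}_{\hat m\hat m}$ plus a smooth remainder of size $O((\omega a)^{2})$. Invertibility on $L^{2}$ of $\tfrac{1}{2}I+\mathcal{D}^{0}_{\hat m\hat m}$ for the Lipschitz reference body $B_m$ is classical (cf.\ the references after \eqref{definition-DL_DK}), with inverse bounded in terms of the Lipschitz character only. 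Choosing $a_0$ small enough that $\omega_{\max}a_0$ is small, the dynamic correction is absorbed and, uniformly in $m$, $\|(\tfrac{1}{2}I+\mathcal{D}_{mm})^{-1}\|\leq c$; via \eqref{invDLnrm} this yields $\|(\tfrac{1}{2}\textbf{I}+DL)^{-1}\|\leq c$.

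For the off-diagonal term $DK$, the kernel of $\mathcal{D}_{mj}$ is smooth on $\partial D_m\times\partial D_j$ and, by \eqref{gradkupradzeten1} together with $|s_m-s_j|\geq d_{mj}$, is pointwise bounded by $c/d_{mj}^{2}$. A direct pointwise/Hilbert--Schmidt estimate gives
\[
\|\mathcal{D}_{mj}\sigma_j\|_{L^{2}(\partial D_m)}\leq c\,\frac{\epsilon^{2}}{d_{mj}^{2}}\,\|\sigma_j\|_{L^{2}(\partial D_j)}.
\]
Applying Cauchy--Schwarz over $j\neq m$ together with a standard shell packing in $\mathbb{R}^{3}$ (the open balls $B(z_j,d/2)$ are mutually disjoint, so the number of centers $z_j$ in the annulus $\{kd\leq|z-z_m|<(k+1)d\}$ is $O(k^{2})$) yields $\sum_{j\neq m}d_{mj}^{-4}\leq c\,d^{-4}$ uniformly in $m$, and therefore
\[
\|DK\|\leq c\,\sqrt{M-1}\,\frac{\epsilon^{2}}{d^{2}}.
\]

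Combining and using the hypothesis $\sqrt{M-1}\,\epsilon<\grave c\,d$, which already forces $\epsilon/d<\grave c$ once $M\geq 2$, we obtain
\[
\|(\tfrac{1}{2}\textbf{I}+DL)^{-1}\|\,\|DK\|\leq c\,\Bigl(\tfrac{\sqrt{M-1}\,\epsilon}{d}\Bigr)\Bigl(\tfrac{\epsilon}{d}\Bigr)\leq c\,\grave c^{\,2},
\]
which is less than $\tfrac{1}{2}$ provided $\grave c$ is chosen small enough (its final value depending only on the size of $\Omega$, the Lipschitz character of the $B_m$'s, $d_{\max}$ and $\omega_{\max}$). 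Substituting into \eqref{nrminvDLplusDK} then gives $\|\sigma_m\|_{L^{2}(\partial D_m)}\leq c\,\epsilon$. The principal obstacle is the second step: the $\epsilon$-uniform invertibility of $\tfrac{1}{2}\textbf{I}+DL$, which requires both the Lipschitz-character-only control of the static double-layer operator on the reference body and the careful absorption of the dynamic correction via the smallness threshold $a_0$.
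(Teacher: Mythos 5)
Your argument is correct and follows the paper's overall strategy: estimate $\|U^{In}\|\lesssim\epsilon$ from the plane-wave incident data, obtain $\epsilon$-uniform invertibility of $\tfrac12\textbf{I}+DL$ by scaling to $\partial B_m$ and absorbing the $\omega$-dependent (dynamic) correction via a Neumann series threshold on $a$, bound $\|DK\|$, and conclude by the Neumann series \eqref{nrminvDLplusDK}. The one place where you genuinely deviate is the off-diagonal block. The paper simply writes $\|DK\|\le\max_m\sum_{j\ne m}\|\mathcal{D}_{mj}\|\le(M-1)\max_{j\ne m}\|\mathcal{D}_{mj}\|\lesssim(M-1)\epsilon^{2}/d^{2}$ and obtains the Neumann threshold $(M-1)\epsilon^{2}/d^{2}<\text{const}$, which is exactly $\sqrt{M-1}\,\epsilon/d<\grave c$; no packing is needed. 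You instead insert a Cauchy--Schwarz plus shell-packing step, using disjointness of the balls $B(z_j,d/2)$ to control $\sum_{j\neq m}d_{mj}^{-4}\lesssim d^{-4}$ and hence $\|DK\|\lesssim\sqrt{M-1}\,\epsilon^{2}/d^{2}$. That estimate is valid (the $O(k^{2})$ shell count in $\mathbb{R}^{3}$ is correct, and $d_{mj}\ge|z_m-z_j|-a\gtrsim|z_m-z_j|$ since $a<d$ once $M\ge 2$), but it is then \emph{not exploited}: you split $\sqrt{M-1}\,\epsilon^{2}/d^{2}=(\sqrt{M-1}\,\epsilon/d)(\epsilon/d)$ and fall back to the stated hypothesis, recovering exactly the condition the cruder estimate already gives. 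Had you run the Neumann condition directly against your sharper bound you would get invertibility under the weaker requirement $(M-1)^{1/4}\epsilon/d<\text{const}$, which is strictly more general than what the proposition asks for — a small bonus, but also an indication that the packing step is superfluous for the stated result. Everything else (the Hilbert--Schmidt/pointwise kernel bound $|s_m-s_j|\ge d_{mj}$ giving $\|\mathcal{D}_{mj}\|\lesssim\epsilon^{2}/d_{mj}^{2}$, the scaling of the diagonal block, the smallness threshold $a_0$) matches the paper.
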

\noindent
\ ~ \ \\
\textit{Proof of Proposition \ref{normofsigmastmtdblela}}.
\ ~ \ \\
For any functions $f,g$ defined on $\partial D_\epsilon$ and $\partial B$ respectively, we define
\begin{eqnarray}\label{elasmalla-wedgevee-defntn}
 (f)^\wedge(\xi)\,:=\,\hat{f}(\xi)\,:=\,f(\epsilon\xi+z)&\mbox{ and }& (g)^\vee(x)\,:=\, \check{g}(x)\,:=\,g\left(\frac{x-z}{\epsilon}\right).
\end{eqnarray}
Let $T_1$ and $T_2$ be an orthonormal basis for the tangent plane to $\partial D_\epsilon$ at x and let $\partial\slash\partial\,T=\sum\limits_{l=1}^{2}\partial\slash{\partial\,T_p}\,~T_p$,
denote the tangential derivative on $\partial D_\epsilon$. Then the space $H^{1}(\partial D_\epsilon)$ is defined as
\begin{eqnarray}\label{defofH1spaceonbdry}
 H^{1}(\partial D_\epsilon)&:=&\{\phi\in\,L^{2}(\partial D_\epsilon);\partial\phi\slash\partial\,T\in\,L^{2}(\partial D_\epsilon)\}.
\end{eqnarray}
We have the following lemma from \cite{DPC-SM13}.
\begin{lemma} \label{L2H1estimates}
Suppose $0<\epsilon\leq1$ and $D_\epsilon:=\epsilon B+z\subset\mathbb{R}^n$. Then for every $\psi \in L^{2}(\partial D_\epsilon)$ and $\phi\in H^{1}(\partial D_\epsilon)$, we have
 \begin{equation}\label{habib2*}
||\psi||_{L^{2}(\partial D_\epsilon)}=\epsilon^\frac{n-1}{2}||\hat{\psi}||_{L^{2}(\partial B)}
\end{equation}
\noindent
and
\begin{equation}\label{habib1*}
\epsilon^{\frac{n-1}{2}}\vert\vert \hat{\phi} \vert\vert_{H^1(\partial B)}~\leq ~\vert\vert \phi \vert\vert_{H^{1}(\partial D_\epsilon)}
~\leq~\epsilon^{\frac{n-3}{2}}\vert\vert \hat{\phi} \vert\vert_{H^{1}(\partial B)}.
\end{equation}
\end{lemma}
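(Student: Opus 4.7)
The plan is to prove Lemma \ref{L2H1estimates} by a direct change of variables between $\partial D_\epsilon$ and $\partial B$, using the map $\xi \mapsto x = \epsilon \xi + z$ and its inverse. Since this is purely a scaling statement, the entire proof reduces to tracking how the surface measure and the tangential gradient transform under dilation by $\epsilon$.

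First I would treat the $L^{2}$ identity \eqref{habib2*}. Under the parametrization $x=\epsilon\xi+z$ with $\xi\in\partial B$, the surface Jacobian satisfies $ds_x=\epsilon^{n-1}ds_\xi$, since each of the $n-1$ independent tangent directions scales by $\epsilon$. Substituting $\psi(x)=\hat\psi(\xi)$ into $\int_{\partial D_\epsilon}|\psi|^2 ds_x$ and taking the square root yields \eqref{habib2*} with no further work.

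Next, for the $H^{1}$ inequality \eqref{habib1*}, I would decompose $\|\phi\|_{H^{1}(\partial D_\epsilon)}^{2}=\|\phi\|_{L^{2}(\partial D_\epsilon)}^{2}+\|\partial\phi/\partial T\|_{L^{2}(\partial D_\epsilon)}^{2}$. The first summand is already controlled by \eqref{habib2*}. For the tangential-derivative part, one observes that under the affine map the tangent plane at $x\in\partial D_\epsilon$ is parallel to the tangent plane at $\xi\in\partial B$, and if $\{T_1,T_2\}$ is an orthonormal tangent frame at $x$ then the corresponding frame on $\partial B$ is the same set of vectors. By the chain rule,
\begin{equation*}
\frac{\partial\phi}{\partial T_p}(x)=\frac{1}{\epsilon}\,\frac{\partial\hat\phi}{\partial T_p}(\xi),
\end{equation*}
so applying the already-proved $L^{2}$ scaling to $\partial\phi/\partial T_p$ gives
\begin{equation*}
\left\|\frac{\partial\phi}{\partial T}\right\|_{L^{2}(\partial D_\epsilon)}=\epsilon^{\frac{n-3}{2}}\left\|\frac{\partial\hat\phi}{\partial T}\right\|_{L^{2}(\partial B)}.
\end{equation*}
Combining these,
\begin{equation*}
\|\phi\|_{H^{1}(\partial D_\epsilon)}^{2}=\epsilon^{n-1}\|\hat\phi\|_{L^{2}(\partial B)}^{2}+\epsilon^{n-3}\left\|\partial\hat\phi/\partial T\right\|_{L^{2}(\partial B)}^{2}.
\end{equation*}

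Finally, I would close the argument using $0<\epsilon\le 1$, which forces $\epsilon^{n-1}\le\epsilon^{n-3}$. Factoring out the larger (resp.\ smaller) power gives the two-sided bound
\begin{equation*}
\epsilon^{n-1}\|\hat\phi\|_{H^{1}(\partial B)}^{2}\le\|\phi\|_{H^{1}(\partial D_\epsilon)}^{2}\le\epsilon^{n-3}\|\hat\phi\|_{H^{1}(\partial B)}^{2},
\end{equation*}
which is \eqref{habib1*} after taking square roots. The only mildly subtle point, and the place I would be most careful, is justifying that the tangent frame on $\partial D_\epsilon$ pulls back to an orthonormal tangent frame on $\partial B$ under a dilation (an isometry on the sphere of directions), so that the chain-rule factor $1/\epsilon$ is exact and not merely up to a Lipschitz constant — but this is immediate because $x\mapsto\epsilon\xi+z$ is a conformal affine map whose differential is a scalar multiple of the identity.
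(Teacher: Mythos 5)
Your proof is correct and complete. Note that the present paper does not actually prove Lemma~\ref{L2H1estimates}; it merely cites it from the earlier reference \cite{DPC-SM13}, so there is no in-paper argument to compare against. Your direct change-of-variables computation — surface measure scales by $\epsilon^{n-1}$, tangential derivative scales by $\epsilon^{-1}$ because the differential of $\xi\mapsto\epsilon\xi+z$ is a scalar multiple of the identity so orthonormal tangent frames pull back to orthonormal tangent frames, and then using $0<\epsilon\leq 1$ to bound $\epsilon^{n-1}\|\hat\phi\|_{L^2}^2+\epsilon^{n-3}\|\partial\hat\phi/\partial T\|_{L^2}^2$ on both sides by the appropriate single power of $\epsilon$ times $\|\hat\phi\|_{H^1(\partial B)}^2$ — is the natural and essentially unique route, and it closes all the gaps. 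The one point you flagged as subtle (that the tangent frame transports isometrically under a dilation) you also resolved correctly.
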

\par We divide the rest of the proof of Proposition \ref{normofsigmastmtdblela} into two steps. In the first step, we assume we have a single obstacle and then in the second step we deal with the multiple obstacle case.
\subsubsection{The case of a single obstacle}\label{singleobscasedbl}
Let us consider a single obstacle $D_\epsilon:=\epsilon B+z$. 
Then define the operator
$\mathcal{D}_{ D_\epsilon}:L^2(\partial D_{\epsilon})\rightarrow L^2(\partial D_{\epsilon})$ by
\begin{eqnarray}\label{defofDpartialDeela}
\left(\mathcal{D}_{ D_\epsilon}\psi\right) (s)=\int_{\partial D_\epsilon}\frac{\partial\Gamma^\omega(s,t)}{\partial\nu(t)}\psi(t)dt.
\end{eqnarray}
Following the arguments in the proof of Proposition  \ref{existence-of-sigmasdbl}, the integral operator
$\frac{1}{2}I+\mathcal{D}_{ D_\epsilon}:L^2(\partial D_{\epsilon})\rightarrow L^2(\partial D_{\epsilon})$ is invertible.
If we consider the problem (\ref{elaimpoenetrable}-\ref{radiationcela})
in $\mathbb{R}^{3}\backslash \bar{D}_\epsilon$, we obtain
  $$\sigma=(\frac{1}{2}I+\mathcal{D}_{ D_\epsilon})^{-1}U^{i},\,\text{where}\,DL+DK=:\mathcal{D}_{ D_\epsilon}$$
and then
\begin{equation}\label{estsigm1dblela}
||\sigma||_{L^2(\partial D_{\epsilon})}\leq ||(\frac{1}{2}I+\mathcal{D}_{ D_\epsilon})^{-1}||_{\mathcal{L}\left(L^2(\partial D_{\epsilon}),L^2(\partial D_{\epsilon})\right)}||U^i||_{L^2(\partial D_{\epsilon})}.
\end{equation}
\begin{lemma}\label{rep1dbllayerela}
 Let $\phi,\psi \in L^{2}(\partial D_\epsilon)$. Then,
\begin{equation}\label{rep1dbllayer0ela}
 \mathcal{D}_{ D_\epsilon}\psi= (\mathcal{D}^\epsilon_B \hat{\psi})^\vee,
\end{equation}
\begin{equation}\label{rep1dbllayer1ela}
 \left(\frac{1}{2}I+\mathcal{D}_{ D_\epsilon}\right)\psi=  \left(\left(\frac{1}{2}I+\mathcal{D}^\epsilon_B \right)\hat{\psi}\right)^\vee,
\end{equation}
 \begin{equation}\label{rep1dbllayer2ela}
 {\left(\frac{1}{2}I+\mathcal{D}_{ D_\epsilon}\right)}^{-1}\phi= \left({\left(\frac{1}{2}I+\mathcal{D}^\epsilon_B\right)}^{-1} \hat{\phi}\right)^\vee
\end{equation}
\begin{equation}\label{nrm1dbllayer2ela}
 \left\|{\left(\frac{1}{2}I+\mathcal{D}_{ D_\epsilon}\right)}^{-1}\right\|_{\mathcal{L}\left(L^2(\partial D_\epsilon), L^2(\partial D_\epsilon) \right)}=
\left\|{\left(\frac{1}{2}I+\mathcal{D}^\epsilon_B\right)}^{-1}\right\|_{\mathcal{L}\left(L^2(\partial B), L^2(\partial B) \right)}
\end{equation}
and
\begin{equation}\label{nrm1dbllayer2-1ela}
 \left\|{\left(\frac{1}{2}I+\mathcal{D}_{ D_\epsilon}\right)}^{-1}\right\|_{\mathcal{L}\left(H^1(\partial D_\epsilon), H^1(\partial D_\epsilon) \right)}\leq
\epsilon^{-1}\left\|{\left(\frac{1}{2}I+\mathcal{D}^\epsilon_B\right)}^{-1}\right\|_{\mathcal{L}\left(H^1(\partial B), H^1(\partial B) \right)}
\end{equation}
with $\mathcal{D}^\epsilon_B \hat{\psi}(\xi):=\int_{\partial B}\frac{\partial\Gamma^{\epsilon\omega}(\xi,\eta)}{\partial \nu(\eta)}\hat{\psi}(\eta) d\eta$.
\end{lemma}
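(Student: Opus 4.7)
The whole lemma rests on a single scaling identity for the Kupradze fundamental solution. My plan is to derive this identity first, then read off each of (\ref{rep1dbllayer0ela})--(\ref{nrm1dbllayer2-1ela}) as a direct consequence.

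\textbf{Step 1: Scaling of $\Gamma^\omega$.} Writing $s=\epsilon\xi+z$, $t=\epsilon\eta+z$, one has $|s-t|=\epsilon|\xi-\eta|$, so the scalar Helmholtz fundamental solution satisfies
$$\Phi_\kappa(\epsilon\xi+z,\epsilon\eta+z)=\frac{e^{i\kappa\epsilon|\xi-\eta|}}{4\pi\,\epsilon|\xi-\eta|}=\epsilon^{-1}\Phi_{\epsilon\kappa}(\xi,\eta).$$
Since $\nabla_s=\epsilon^{-1}\nabla_\xi$, the formula (\ref{kupradzeten}) together with the identification $\kappa_{s^{\epsilon\omega}}=\epsilon\kappa_{s^\omega}$, $\kappa_{p^{\epsilon\omega}}=\epsilon\kappa_{p^\omega}$ gives
$$\Gamma^\omega(\epsilon\xi+z,\epsilon\eta+z)=\epsilon^{-1}\,\Gamma^{\epsilon\omega}(\xi,\eta).$$

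\textbf{Step 2: Scaling of the double layer kernel.} The outer unit normal is scale-invariant, $N_{D_\epsilon}(\epsilon\eta+z)=N_B(\eta)$, while $\nabla_t=\epsilon^{-1}\nabla_\eta$. Applying the co-normal derivative (\ref{conormaldblela}) in the variable $t$ to the identity of Step 1 produces one factor $\epsilon^{-1}$, so
$$\frac{\partial\Gamma^\omega(s,t)}{\partial\nu(t)}\bigg|_{s=\epsilon\xi+z,\,t=\epsilon\eta+z}=\epsilon^{-2}\,\frac{\partial\Gamma^{\epsilon\omega}(\xi,\eta)}{\partial\nu(\eta)}.$$
Combined with the surface area scaling $dt=\epsilon^{2}d\eta$ in $\mathbb{R}^3$, the two factors of $\epsilon$ exactly cancel and (\ref{rep1dbllayer0ela}) follows by the change of variables in the integral defining $\mathcal{D}_{D_\epsilon}$. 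Adding $\frac12 I$ on both sides — noting $(\frac12 \psi)^\wedge=\frac12\hat\psi$ — gives (\ref{rep1dbllayer1ela}).

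\textbf{Step 3: Inverse and operator norms.} The maps $\psi\mapsto\hat\psi$ and $\phi\mapsto\check\phi$ are mutual inverses between $L^2(\partial D_\epsilon)$ and $L^2(\partial B)$, so conjugating (\ref{rep1dbllayer1ela}) yields (\ref{rep1dbllayer2ela}) directly (by Proposition \ref{existence-of-sigmasdbl} applied to the single obstacle, both operators are invertible, so no solvability question arises). For (\ref{nrm1dbllayer2ela}), the $L^2$ identity (\ref{habib2*}) of Lemma \ref{L2H1estimates} gives $\|\psi\|_{L^2(\partial D_\epsilon)}=\epsilon\|\hat\psi\|_{L^2(\partial B)}$ for every $\psi$; the common factor $\epsilon$ cancels in the quotient defining the operator norm, producing exact equality. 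Finally, for (\ref{nrm1dbllayer2-1ela}), the two-sided $H^1$ bound (\ref{habib1*}) reads
$$\epsilon\|\hat\phi\|_{H^1(\partial B)}\leq\|\phi\|_{H^1(\partial D_\epsilon)}\leq\|\hat\phi\|_{H^1(\partial B)};$$
applying the upper bound to $\psi=(\tfrac12 I+\mathcal{D}_{D_\epsilon})^{-1}\phi$ and the lower bound to $\phi$ (i.e.\ $\|\hat\phi\|_{H^1(\partial B)}\leq\epsilon^{-1}\|\phi\|_{H^1(\partial D_\epsilon)}$) introduces exactly one factor of $\epsilon^{-1}$, giving the claimed inequality.

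No step is genuinely an obstacle; the only care needed is in Step 2, where the co-normal derivative intertwines a matrix-valued kernel with the outward normal, so one has to verify that \emph{both} the $\lambda(\div\,\cdot)N$ and $\mu(\nabla+\nabla^\top)N$ pieces rescale with the same factor $\epsilon^{-1}$ — which they do, since both are built from a single gradient in $t$ and the normal $N$ is unchanged. Once this is checked, everything else is bookkeeping with the scaling estimates already recorded in Lemma \ref{L2H1estimates}.
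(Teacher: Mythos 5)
Your proof is correct and follows essentially the same route as the paper: you make explicit the scaling identity $\Gamma^\omega(\epsilon\xi+z,\epsilon\eta+z)=\epsilon^{-1}\Gamma^{\epsilon\omega}(\xi,\eta)$ that the paper uses implicitly in its change-of-variables computation, and then the conjugation by the hat/check maps together with Lemma \ref{L2H1estimates} give \eqref{rep1dbllayer2ela}--\eqref{nrm1dbllayer2-1ela} exactly as in the paper's argument.
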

\begin{proof}{\it{of Lemma \ref{rep1dbllayerela}}.}
\begin{itemize}
\item We have,
 \begin{eqnarray*}
 \mathcal{D}_{ D_\epsilon}\psi(s)&=&\int_{\partial D_\epsilon}\frac{\partial \Gamma^{\omega}(s,t)}{\partial \nu(t)}\psi(t) dt\\
&=&\int_{\partial D_\epsilon}\left[\lambda\left(\div_t\,\Gamma^{\omega}(s,t)\right)N_t+\mu\left(\nabla_t\Gamma^{\omega}(s,t)+(\nabla_t\Gamma^{\omega}(s,t))^{\top}\right)N_t)\right]\psi(t) dt\nonumber\\
&=&\int_{\partial B}\epsilon^{-2}\left[\lambda\left(\div_\eta\,\Gamma^{\epsilon\omega}(\xi,\eta)\right)N_\eta+\mu\left(\nabla_\eta\Gamma^{\epsilon\omega}(\xi,\eta)+(\nabla_\eta\Gamma^{\epsilon\omega}(\xi,\eta))^{\top}\right)N_\eta\right]\psi(\epsilon\eta+z)\epsilon^2d\eta\nonumber\\
          &=&\int_{\partial B}\frac{\partial \Gamma^{\epsilon\omega}(\xi,\eta)}{\partial \nu(\eta)}\psi(\epsilon\eta+z) d\eta\\
          &=&\mathcal{D}^\epsilon_B \hat{\psi}(\xi).
 \end{eqnarray*}
The above gives us \eqref{rep1dbllayer0ela}. From \eqref{rep1dbllayer0ela}, we can obtain \eqref{rep1dbllayer1ela}.
\item The following equalities
\begin{eqnarray*}
\left(\frac{1}{2}I+\mathcal{D}_{ D_\epsilon}\right)\left({\left(\frac{1}{2}I+\mathcal{D}^\epsilon_B\right)}^{-1} \hat{\phi}\right)^\vee
                   &\,\substack{=\\\eqref{rep1dbllayer1ela}}\,& \left(\left(\frac{1}{2}I+\mathcal{D}^\epsilon_B\right){\left(\frac{1}{2}I+\mathcal{D}^\epsilon_B\right)}^{-1} \hat{\phi}\right)^\vee
                   \,= \, \hat{\phi}^{\vee}\,=\, \phi
\end{eqnarray*}
provide us \eqref{rep1dbllayer2ela}.
\item The following equalities
\begin{eqnarray*}
 \left\|\left(\frac{1}{2}I+\mathcal{D}_{ D_\epsilon}\right)^{-1}\right\|_{\mathcal{L}\left(L^2(\partial D_\epsilon), L^2(\partial D_\epsilon) \right)}&:=&
\substack{Sup\\ \phi(\neq0)\in L^{2}(\partial D_\epsilon)} \frac{\left\|\left(\frac{1}{2}I+\mathcal{D}_{ D_\epsilon}\right)^{-1}\phi\right\|_{L^2(\partial D_\epsilon)}}{||\phi||_{L^2(\partial D_\epsilon)}}\\
&\substack{=\\\eqref{habib2*},\eqref{habib1*}}&\,\substack{Sup\\ \phi(\neq0)\in L^{2}(\partial D_\epsilon)}
\frac{\epsilon~\left\|\left(\left(\frac{1}{2}I+\mathcal{D}_{ D_\epsilon}\right)^{-1}\phi\right)^{\wedge}\right\|_{L^2(\partial B)}}{\epsilon~||\hat{\phi}||_{L^2(\partial B)}}\\
&\substack{=\\ \eqref{rep1dbllayer2ela}}&\substack{Sup\\ \hat{\phi}(\neq0)\in L^{2}(\partial D_\epsilon)}
\frac{\left\| {\left(\frac{1}{2}I+\mathcal{D}^\epsilon_B\right)}^{-1} \hat{\phi}\right\|_{L^2(\partial B)}}{||\hat{\phi}||_{L^2(\partial B)}} \\
&=&\left\|{\left(\frac{1}{2}I+\mathcal{D}^\epsilon_B\right)}^{-1}\right\|_{\mathcal{L}\left(L^2(\partial B), L^2(\partial B) \right)} 
\end{eqnarray*}
provide us \eqref{nrm1dbllayer2ela}. By proceeding in the similar manner we can obtain \eqref{nrm1dbllayer2-1ela} as mentioned below,
\begin{eqnarray*}
 \left\|\left(\frac{1}{2}I+\mathcal{D}_{ D_\epsilon}\right)^{-1}\right\|_{\mathcal{L}\left(H^1(\partial D_\epsilon), H^1(\partial D_\epsilon) \right)}&:=&
\substack{Sup\\ \phi(\neq0)\in H^{1}(\partial D_\epsilon)} \frac{\left\|\left(\frac{1}{2}I+\mathcal{D}_{ D_\epsilon}\right)^{-1}\phi\right\|_{H^1(\partial D_\epsilon)}}{||\phi||_{H^1(\partial D_\epsilon)}}\\
&\substack{\leq\\\eqref{habib2*},\eqref{habib1*}}&\,\substack{Sup\\ \phi(\neq0)\in H^{1}(\partial D_\epsilon)}
\frac{\left\|\left(\left(\frac{1}{2}I+\mathcal{D}_{ D_\epsilon}\right)^{-1}\phi\right)^{\wedge}\right\|_{H^1(\partial B)}}{\epsilon~||\hat{\phi}||_{H^1(\partial B)}}\\
&\substack{=\\ \eqref{rep1dbllayer2ela}}&\epsilon^{-1}\substack{Sup\\ \hat{\phi}(\neq0)\in H^{1}(\partial D_\epsilon)}
\frac{\left\| {\left(\frac{1}{2}I+\mathcal{D}^\epsilon_B\right)}^{-1} \hat{\phi}\right\|_{H^1(\partial B)}}{||\hat{\phi}||_{H^1(\partial B)}} \\
&=&\epsilon^{-1}\left\|{\left(\frac{1}{2}I+\mathcal{D}^\epsilon_B\right)}^{-1}\right\|_{\mathcal{L}\left(H^1(\partial B), H^1(\partial B) \right)}. 
\end{eqnarray*}
\end{itemize}
\end{proof}
\noindent
The next lemma provides us with an estimate of the left hand side of \eqref{nrm1dbllayer2ela} by a constant $C$ with a useful dependence of $C$ in terms of $B$ through its Lipschitz character and $\omega$.
\begin{lemma}\label{lemmanrm1dbllayer31ela}
  The operator norm of $\left(\frac{1}{2}I+\mathcal{D}_{ D_\epsilon}\right)^{-1}:L^{2}(\partial D_\epsilon)\rightarrow L^{2}(\partial D_\epsilon)$
  satisfies the estimate
\begin{eqnarray}\label{nrm1dbllayer31ela}
 \left\|\left(\frac{1}{2}I+\mathcal{D}_{ D_\epsilon}\right)^{-1}\right\|_{\mathcal{L}\left(L^2(\partial D_\epsilon), L^2(\partial D_\epsilon) \right)}
&\leq&\grave{C}_6,
\end{eqnarray}
with
$\grave{C}_6:=\frac{4\pi\left\|\left(\frac{1}{2}I+{\mathcal{D}_{B}^{i_{\omega}}}\right)^{-1}\right\|_{\mathcal{L}\left(L^2(\partial B), L^2(\partial B) \right)}}
{4\pi-\left[\frac{4\lambda+17\mu}{2c_s^4}+\frac{12\lambda+9\mu}{2c_p^4}\right]\omega^2\epsilon^2|\partial B|\left\|\left(\frac{1}{2}I+{\mathcal{D}_{B}^{i_{\omega}}}\right)^{-1}\right\|_{\mathcal{L}\left(L^2(\partial B), L^2(\partial B) \right)}}$.
Here, $\mathcal{D}_{B}^{i_{\omega}}:L^2(\partial B)\rightarrow L^2(\partial B)$ is the double layer potential with the zero frequency.
\end{lemma}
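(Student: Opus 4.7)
The plan is to reduce the bound on $\partial D_\epsilon$ to a bound on $\partial B$ via the scaling identity \eqref{nrm1dbllayer2ela} in Lemma \ref{rep1dbllayerela}, and then to control $\bigl(\tfrac{1}{2}I+\mathcal{D}^\epsilon_B\bigr)^{-1}$ by treating $\mathcal{D}^\epsilon_B$ as a small perturbation of the zero–frequency operator $\mathcal{D}_B^{i_\omega}$. First, by \eqref{nrm1dbllayer2ela} the $L^2(\partial D_\epsilon)\to L^2(\partial D_\epsilon)$ operator norm of $\bigl(\tfrac{1}{2}I+\mathcal{D}_{D_\epsilon}\bigr)^{-1}$ equals the $L^2(\partial B)\to L^2(\partial B)$ norm of $\bigl(\tfrac{1}{2}I+\mathcal{D}^\epsilon_B\bigr)^{-1}$, so the whole problem is transferred onto the fixed reference domain $B$ with the kernel now corresponding to the rescaled frequency $\epsilon\omega$.

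Next, I would split
\[
\tfrac{1}{2}I+\mathcal{D}^\epsilon_B=\bigl(\tfrac{1}{2}I+\mathcal{D}_B^{i_\omega}\bigr)+\bigl(\mathcal{D}^\epsilon_B-\mathcal{D}_B^{i_\omega}\bigr),
\]
and estimate the perturbation using the explicit power series representation \eqref{gradkupradzeten1} for $\nabla_\eta\Gamma^{\epsilon\omega}$. The terms $l=0$ and $l=1$ do not contribute to the difference (the $l=0$ term reproduces the static kernel because the factors $\kappa_{s^\omega}^2,\kappa_{p^\omega}^2$ cancel the $1/\omega^2$, while the $l=1$ term vanishes owing to the prefactor $(l-1)$), so $\mathcal{D}^\epsilon_B-\mathcal{D}_B^{i_\omega}$ starts at $l=2$. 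The $l=2$ contribution carries the coefficients $\tfrac{-1}{8\omega^2}(3\kappa_{s^\omega}^4+\kappa_{p^\omega}^4)$ and $\tfrac{1}{8\omega^2}(\kappa_{s^\omega}^4-\kappa_{p^\omega}^4)$ (after rescaling $\kappa\to\epsilon\kappa$), together with the Lamé–vector structure of the co-normal derivative \eqref{conormaldblela}. Collecting the scalar prefactors of the identity piece and the rank-one piece of $N_\eta$ in the co-normal derivative, and combining with the coefficients from $\lambda,\mu$, gives a kernel on $\partial B$ of the form $\epsilon^2\omega^2\bigl(\tfrac{4\lambda+17\mu}{2c_s^4}+\tfrac{12\lambda+9\mu}{2c_p^4}\bigr)\cdot\tfrac{1}{4\pi}$ times a bounded integral kernel. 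A direct Schur-type bound then yields
\[
\bigl\|\mathcal{D}^\epsilon_B-\mathcal{D}_B^{i_\omega}\bigr\|_{\mathcal{L}(L^2(\partial B))}\leq \frac{\epsilon^2\omega^2|\partial B|}{4\pi}\left[\frac{4\lambda+17\mu}{2c_s^4}+\frac{12\lambda+9\mu}{2c_p^4}\right],
\]
the tail $l\geq 3$ being absorbed into this leading bound after noting geometric convergence of the series under $\epsilon\omega_{\max}\mathrm{diam}(B)<1$, which is ensured by $a\leq a_0$.

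Finally, since $\tfrac{1}{2}I+\mathcal{D}_B^{i_\omega}$ is invertible on $L^2(\partial B)$ (Proposition \ref{existence-of-sigmasdbl} applied at zero frequency), the standard Neumann–series identity
\[
\bigl\|(A+R)^{-1}\bigr\|\leq \frac{\|A^{-1}\|}{1-\|A^{-1}\|\,\|R\|},
\]
valid whenever $\|A^{-1}\|\,\|R\|<1$, produces exactly the claimed constant $\grave{C}_6$. Smallness of $\|A^{-1}\|\,\|R\|$ is guaranteed by choosing $a_0$ (equivalently $\epsilon$) small enough, which is exactly the hypothesis $a\leq a_0$ in the statement.

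The main obstacle is the second paragraph: carefully unwinding the tensorial structure of the co-normal derivative against the expansion \eqref{gradkupradzeten1} so that the explicit constants $\tfrac{4\lambda+17\mu}{2c_s^4}$ and $\tfrac{12\lambda+9\mu}{2c_p^4}$ come out correctly, and showing that the higher-order terms of the series can indeed be bounded by the same expression without worsening the constant. Everything else is standard scaling and a Neumann-series argument.
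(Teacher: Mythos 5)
Your proposal follows essentially the same route as the paper: the paper decomposes $\mathcal{D}_{D_\epsilon}=\mathcal{D}_{D_\epsilon}^{i_\omega}+\mathcal{D}_{D_\epsilon}^{d_\omega}$ on $\partial D_\epsilon$ and then scales each piece, whereas you scale to $\partial B$ first and then decompose, but these are the same manipulation, and the Neumann-series step, the series estimate via \eqref{gradkupradzeten1} (with the $(l-1)$ factor killing $l=1$ and the geometric-tail bound controlled by $\epsilon\omega_{\max}\mathrm{diam}(B)<1$), and the resulting constant all coincide with the paper's.
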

\noindent

Here we should mention that if
$\epsilon^2\leq\frac{\pi}{\left[\frac{4\lambda+17\mu}{2c_s^4}+\frac{12\lambda+9\mu}{2c_p^4}\right]\omega^2|\partial B|\left\|\left(\frac{1}{2}I+{\mathcal{D}_{B}^{i_{\omega}}}\right)^{-1}\right\|_{\mathcal{L}\left(L^2(\partial B), L^2(\partial B) \right)}}$,
then $\grave{C}_6$ is bounded by $\frac{4}{3}\left\|\left(\frac{1}{2}I+{\mathcal{D}_{B}^{i_{\omega}}}\right)^{-1}\right\|_{\mathcal{L}\left(L^2(\partial B), L^2(\partial B) \right)}$,
which is a universal constant depending only on $\partial B$ through its Lipschitz character.
\begin{proof}{\it{of Lemma \ref{lemmanrm1dbllayer31ela}}.} To estimate the operator norm of $\left(\frac{1}{2}I+\mathcal{D}_{ D_\epsilon}\right)^{-1}$ we decompose
$\mathcal{D}_{ D_\epsilon}=:\mathcal{D}_{ D_\epsilon}^\omega=\mathcal{D}_{ D_\epsilon}^{i_{\omega}}+\mathcal{D}_{ D_\epsilon}^{d_{\omega}}$ into
two parts $\mathcal{D}_{ D_\epsilon}^{i_{\omega}}$ ( independent of $\omega$ ) and  $\mathcal{D}_{ D_\epsilon}^{d_{\omega}}$ ( dependent of $\omega$ ) given by
\begin{eqnarray}
\mathcal{D}_{ D_\epsilon}^{i_{\omega}} \psi(s):=\int_{\partial D_\epsilon} \left(\frac{\partial}{\partial \nu(t)}\Gamma^{0}(s,t)\right)\psi(t) dt,\label{def-of_Dikela}\\
\mathcal{D}_{ D_\epsilon}^{d_{\omega}} \psi(s):=\int_{\partial D_\epsilon} \left(\frac{\partial}{\partial \nu(t)}[\Gamma^{\omega}(s,t)-\Gamma^0(s,t)]\right)\psi(t) dt.\label{def-of_Ddkela}
\end{eqnarray}
 With this definition, $\frac{1}{2}I+\mathcal{D}_{ D_\epsilon}^{i_{\omega}}:L^2(\partial D_\epsilon)\rightarrow L^2(\partial D_\epsilon)$ is invertible, see \cite{MO-MM:TJFAA2000,MS-MM:IMSC2006,MD:InteqnsaOpethe1997}. Hence,
$\frac{1}{2}I+\mathcal{D}_{ D_\epsilon}=\left(\frac{1}{2}I+\mathcal{D}_{ D_\epsilon}^{i_{\omega}}\right)\left(I+\left(\frac{1}{2}I+\mathcal{D}_{ D_\epsilon}^{i_{\omega}}\right)^{-1}\mathcal{D}_{ D_\epsilon}^{d_{\omega}}\right)$ and so
\begin{eqnarray}
 &&\hspace{-1cm}\left\|\left(\frac{1}{2}I+\mathcal{D}_{ D_\epsilon}\right)^{-1}\right\|_{\mathcal{L}\left(L^2(\partial D_\epsilon), L^2(\partial D_\epsilon) \right)}\nonumber\\
 &&=\left\|\left(I+\left(\frac{1}{2}I+\mathcal{D}_{ D_\epsilon}^{i_{\omega}}\right)^{-1}\mathcal{D}_{ D_\epsilon}^{d_{\omega}}\right)^{-1}\left(\frac{1}{2}I+\mathcal{D}_{ D_\epsilon}^{i_{\omega}}\right)^{-1}\right\|_{\mathcal{L}\left(L^2(\partial D_\epsilon), L^2(\partial D_\epsilon) \right)}\nonumber \\
&&\leq \left\|\left(I+\left(\frac{1}{2}I+\mathcal{D}_{ D_\epsilon}^{i_{\omega}}\right)^{-1}\mathcal{D}_{ D_\epsilon}^{d_{\omega}}\right)^{-1}\right\|_{\mathcal{L}\left(L^2(\partial D_\epsilon), L^2(\partial D_\epsilon) \right)}
 \left\|\left(\frac{1}{2}I+\mathcal{D}_{ D_\epsilon}^{i_{\omega}}\right)^{-1}\right\|_{\mathcal{L}\left(L^2(\partial D_\epsilon), L^2(\partial D_\epsilon) \right)}.\label{nrm1dbllayer3ela}
\end{eqnarray}
So, to estimate the operator norm of $\left(\frac{1}{2}I+\mathcal{D}_{ D_\epsilon}\right)^{-1}$ one needs to estimate the
operator norm of $\left(I+\left(\frac{1}{2}I+\mathcal{D}_{ D_\epsilon}^{i_{\omega}}\right)^{-1}\mathcal{D}_{ D_\epsilon}^{d_{\omega}}\right)^{-1}$.
In particular one needs to have the knowledge about the operator norms of
 $\left(\frac{1}{2}I+\mathcal{D}_{ D_\epsilon}^{i_{\omega}}\right)^{-1}$ and $\mathcal{D}_{ D_\epsilon}^{d_{\omega}}$ to apply the Neumann series.
 For that purpose,  we can estimate the operator norm of  $\left(\frac{1}{2}I+\mathcal{D}_{ D_\epsilon}^{i_{\omega}}\right)^{-1}$ from \eqref{nrm1dbllayer2ela} by
\begin{eqnarray}\label{optnormdikela}
  \left\|\left(\frac{1}{2}I+\mathcal{D}_{ D_\epsilon}^{i_{\omega}}\right)^{-1}\right\|_{\mathcal{L}\left(L^2(\partial D_\epsilon), L^2(\partial D_\epsilon) \right)}
   &=& \left\|\left(\frac{1}{2}I+\mathcal{D}_{B}^{i_{\omega}}\right)^{-1}\right\|_{\mathcal{L}\left(L^2(\partial B), L^2(\partial B) \right)}.
 \end{eqnarray}
Here $\mathcal{D}^{i_{\omega}}_B \hat{\psi}(\xi):=\int_{\partial B} \left(\frac{\partial}{\partial \nu(\eta)}\Gamma^0(\xi,\eta)\right)\hat{\psi}(\eta) d\eta$.
From the definition of the operator $\mathcal{D}_{ D_\epsilon}^{d_{k}}$ in \eqref{def-of_Ddkela}, 
we deduce that
\begin{eqnarray}\label{D^{d_{k}}ela}
 \mathcal{D}_{ D_\epsilon}^{d_{\omega}} \psi(s)&=&\int_{\partial B} \left(\frac{\partial}{\partial \nu(\eta)}[\Gamma^{\epsilon\omega}(\xi,\eta)-\Gamma^0(\xi,\eta)]\right)\hat{\psi}(\eta) d\eta\nonumber\\
&=&\int_{\partial B}\left[\lambda\left(\div_\eta\,[\Gamma^{\epsilon\omega}(\xi,\eta)-\Gamma^0(\xi,\eta)]\right)N_\eta\right.\nonumber\\
&&\qquad\left.+\mu\left(\nabla_\eta[\Gamma^{\epsilon\omega}(\xi,\eta)-\Gamma^0(\xi,\eta)]+(\nabla_\eta[\Gamma^{\epsilon\omega}(\xi,\eta)-\Gamma^0(\xi,\eta)])^{\top}\right)N_\eta\right]\hat{\psi}(\eta) d\eta\nonumber\\
&=&\int_{\partial B}\left[\lambda\, I_{1}\otimes\,N_\eta+\mu\left(I_2+I_2^{\top}\right)N_\eta\right]\hat{\psi}(\eta) d\eta,
\end{eqnarray}
where the vector $I_1$ and the third order tensor $I_2$ are estimated by using \eqref{kupradzeten1} and \eqref{gradkupradzeten1} as
\begin{eqnarray}
I_1&=&-\frac{\epsilon^2}{4\pi}\sum_{l=2}^{\infty}\frac{\epsilon^{l-2}i^l}{l!(l+2)}\frac{(l-1)}{\omega^2}\left[
-2\kappa_{s^{\omega}}^{l+2}+(l+4)\kappa_{p^{\omega}}^{l+2}\right]|\xi-\eta|^{l-3}(\xi-\eta),\label{elaI_1}\\
I_2&=&-\frac{\epsilon^2}{4\pi}\sum_{l=2}^{\infty}\frac{\epsilon^{l-2}i^l}{l!(l+2)}\frac{(l-1)}{\omega^2}\left[
\left((l+1)\kappa_{s^{\omega}}^{l+2}+\kappa_{p^{\omega}}^{l+2}\right)|\xi-\eta|^{l-3} (\xi-\eta)\otimes\rm \textbf{I}\right. \nonumber\\
       &&  \left.-\left(\kappa_{s^{\omega}}^{l+2}-\kappa_{p^{\omega}}^{l+2}\right)|\xi-\eta|^{l-3}\left((l-3)|\xi-\eta|^{-2}\otimes^3(\xi-\eta)+\rm \textbf{I}\otimes(\xi-\eta)+(\xi-\eta)\otimes\rm \textbf{I}\right)\right].\label{elaI_2}
\end{eqnarray}
Using the observation `$\left\|~|x|^p\right\|_{L^2(D)}\leq\left\|x\right\|^p_{L^2(D)}|D|^{\frac{1-p}{2}}$', we obtain
\begin{eqnarray}\label{modD^{d_{k}}ela}
&&\hspace{-.5cm}\left|\mathcal{D}_{ D_\epsilon}^{d_{\omega}} \psi(s)\right|\nonumber\\
&\leq&\lambda\frac{\epsilon^2}{4\pi}\sum_{l=2}^{\infty}\frac{\epsilon^{l-2}}{l!(l+2)}\frac{(l-1)}{\omega^2}\left(2\kappa_{s^{\omega}}^{l+2}+(l+4)\kappa_{p^{\omega}}^{l+2}\right)
\int_{\partial B}|\xi-\eta|^{l-2}\vert\hat{\psi}(\eta)\vert d\eta+ \nonumber\\
&&2\mu\frac{\epsilon^2}{4\pi}\left[\sum_{l=3}^{\infty}\frac{\epsilon^{l-2}}{l!(l+2)}\frac{(l-1)}{\omega^2}\left(2l\kappa_{s^{\omega}}^{l+2}+l\kappa_{p^{\omega}}^{l+2}\right)
\int_{\partial B}|\xi-\eta|^{l-2}\vert\hat{\psi}(\eta)\vert d\eta+\frac{\left(6\kappa_{s^{\omega}}^{4}+4\kappa_{p^{\omega}}^{4}\right)}{8\omega^2}\int_{\partial B}\vert\hat{\psi}(\eta)\vert d\eta\right] \nonumber\\
&\leq&\lambda\frac{\epsilon^2}{4\pi}\|\hat{\psi}\|_{L^2(\partial B)}\vert\partial B\vert^\frac{1}{2}\left[\sum_{l=2}^{\infty}\frac{\epsilon^{l-2}}{l!(l+2)}\frac{(l-1)}{\omega^2}\left(2\kappa_{s^{\omega}}^{l+2}+(l+4)\kappa_{p^{\omega}}^{l+2}\right)
\left\|\xi-\cdot\right\|^{l-2}_{L^2(\partial B)}\vert\partial B\vert^\frac{2-l}{2}\right]+ \nonumber\\
&&2\mu\frac{\epsilon^2}{4\pi}\|\hat{\psi}\|_{L^2(\partial B)}\vert\partial B\vert^\frac{1}{2}\left[\sum_{l=2}^{\infty}\frac{\epsilon^{l-2}}{(l-2)!(l+2)}\frac{1}{\omega^2}\left(2\kappa_{s^{\omega}}^{l+2}+\kappa_{p^{\omega}}^{l+2}\right)
\left\|\xi-\cdot\right\|^{l-2}_{L^2(\partial B)}\vert\partial B\vert^\frac{2-l}{2}+\frac{\left(\kappa_{s^{\omega}}^{4}+\kappa_{p^{\omega}}^{4}\right)}{4\omega^2}\right] \nonumber\\
&\leq&\omega^2\frac{\epsilon^2}{4\pi}\|\hat{\psi}\|_{L^2(\partial B)}\vert\partial B\vert^\frac{1}{2}\left(\frac{1}{c_s^4}\left[\frac{\mu}{2}+(\lambda+4\mu)\sum_{l=0}^{\infty}\left(\frac{1}{2}\epsilon\kappa_{s^{\omega}}
\left\|\xi-\cdot\right\|_{L^2(\partial B)}\vert\partial B\vert^\frac{-1}{2}\right)^{l}\right]\right.\nonumber\\
&&\hspace{3.5cm}\left.+\frac{1}{c_p^4}\left[\frac{\mu}{2}+(3\lambda+2\mu)\sum_{l=0}^{\infty}\left(\frac{1}{2}\epsilon\kappa_{p^{\omega}}
\left\|\xi-\cdot\right\|_{L^2(\partial B)}\vert\partial B\vert^\frac{-1}{2}\right)^{l}\right]\right) \nonumber\\
&=&\omega^2\frac{\epsilon^2}{4\pi}\|\hat{\psi}\|_{L^2(\partial B)}\vert\partial B\vert^\frac{1}{2}\left(\frac{1}{c_s^4}\left[\frac{\mu}{2}+\frac{\lambda+4\mu}{1-\frac{1}{2}\epsilon\kappa_{s^{\omega}}
\left\|\xi-\cdot\right\|_{L^2(\partial B)}\vert\partial B\vert^\frac{-1}{2}}\right]\right.\nonumber\\
&&\hspace{3.5cm}\left.+\frac{1}{c_p^4}\left[\frac{\mu}{2}+\frac{3\lambda+2\mu}{1-\frac{1}{2}\epsilon\kappa_{p^{\omega}}
\left\|\xi-\cdot\right\|_{L^2(\partial B)}\vert\partial B\vert^\frac{-1}{2}}\right]\right),\,\text{for}\,\epsilon<\frac{2\min{\{c_s,c_p\}}}{\omega_{\max}\max_{m}diam(B_m)}\nonumber\\
&\leq&\grave{C}_1\omega^2\epsilon^{2}
\|\hat{\psi}\|_{L^2(\partial B)}, \text{for } \epsilon\leq\frac{\min{\{c_s,c_p\}}}{\omega_{\max}\max_{m}diam(B_m)},
 \end{eqnarray}
with $\grave{C}_1:=\frac{\vert\partial B\vert^\frac{1}{2}}{4\pi}\left[\frac{4\lambda+17\mu}{2c_s^4}+\frac{12\lambda+9\mu}{2c_p^4}\right]$.
From this we obtain,
\begin{eqnarray*}
 \left\|\mathcal{D}_{ D_\epsilon}^{d_{\omega}} \psi\right\|^2_{L^2(\partial D_\epsilon)}
            &=&\int_{\partial D_\epsilon}\left|\mathcal{D}_{ D_\epsilon}^{d_{\omega}} \psi(s)\right|^2 ds\nonumber \\
            &\substack{\leq\\ \eqref{modD^{d_{k}}ela}}&\int_{\partial D_\epsilon}\left[\grave{C}_1 \omega^2\epsilon^{2}
 \|\hat{\psi}\|_{L^2(\partial B)}\right]^2 ds\nonumber\\
 &=&\grave{C}_1^2\omega^4\epsilon^{6}|\partial B|
 \|\hat{\psi}\|^2_{L^2(\partial B)}.\nonumber
\end{eqnarray*}
Hence
\begin{eqnarray}\label{normD^{d_{k}}ela}
 \left\|\mathcal{D}_{ D_\epsilon}^{d_{\omega}} \psi\right\|_{L^2(\partial D_\epsilon)}&\leq&
\grave{C}_1 \omega^2\epsilon^{3}|\partial B|^\frac{1}{2}
  \|\hat{\psi}\|_{L^2(\partial B)}.
\end{eqnarray}
We estimate the norm of the operator $\mathcal{D}_{ D_\epsilon}^{d_{\omega}}$ as
\begin{eqnarray}\label{fnormD^{d_{k}}L2ela}
 \left\|\mathcal{D}_{ D_\epsilon}^{d_{\omega}}\right\|_{\mathcal{L}\left(L^2(\partial D_\epsilon), L^2(\partial D_\epsilon)\right)}&=&
\substack{Sup\\ \psi(\neq0)\in L^{2}(\partial D_\epsilon)} \frac{||\mathcal{D}_{ D_\epsilon}^{d_{\omega}}\psi||_{L^2(\partial D_\epsilon)}}{||\psi||_{L^2(\partial D_\epsilon)}}\nonumber\\
&\,\substack{\leq\\\eqref{normD^{d_{k}}ela},\,\eqref{habib2*}}\,&
 \substack{Sup\\ \hat{\psi}(\neq0)\in L^{2}(\partial B)} \frac{\grave{C}_1\omega^2\epsilon^{3}|\partial B|^\frac{1}{2}\|\hat{\psi}\|_{L^2(\partial B)}}
{\epsilon~\|\hat{\psi}\|_{L^2(\partial B)}}\nonumber\\
&=&\grave{C}_1\omega^2\epsilon^{2}|\partial B|^\frac{1}{2}.
\end{eqnarray}
Hence, we get
 \begin{eqnarray}\label{norms0dkela}
&  \left\|\left(\frac{1}{2}I+{\mathcal{D}_{ D_\epsilon}^{i_{\omega}}}\right)^{-1}\mathcal{D}_{ D_\epsilon}^{d_{\omega}}\right\|_{\mathcal{L}\left(L^2(\partial D_\epsilon), L^2(\partial D_\epsilon) \right)}\nonumber\\
&\leq&
\left\|\left(\frac{1}{2}I+{\mathcal{D}_{ D_\epsilon}^{i_{\omega}}}\right)^{-1}\right\|_{\mathcal{L}\left(L^2(\partial D_\epsilon), L^2(\partial D_\epsilon) \right)}
\left\|\mathcal{D}_{ D_\epsilon}^{d_{\omega}}\right\|_{\mathcal{L}\left(L^2(\partial D_\epsilon), L^2(\partial D_\epsilon) \right)}\nonumber\\
&\substack{\leq\\
\eqref{optnormdikela},\eqref{fnormD^{d_{k}}L2ela}}&\left\|\left(\frac{1}{2}I+{\mathcal{D}_{ B}^{i_{\omega}}}\right)^{-1}\right\|_{\mathcal{L}\left(L^2(\partial B), L^2(\partial B) \right)}\grave{C}_1\omega^2\epsilon^{2}|\partial B|^\frac{1}{2}\nonumber\\
&\leq&\grave{C}_2\omega^2\epsilon^2 ,
 \end{eqnarray}
where $\grave{C}_2:=\grave{C}_1|\partial B|^\frac{1}{2}\left\|\left(\frac{1}{2}I+{\mathcal{D}_{B}^{i_{\omega}}}\right)^{-1}\right\|_{\mathcal{L}\left(L^2(\partial B), L^2(\partial B) \right)}$.
%
Assuming $\epsilon$ to satisfy the condition $\epsilon<\frac{1}{\sqrt{\grave{C}_2}\omega_{\max}}$, then
$\left\|\left(\frac{1}{2}I+{\mathcal{D}_{ D_\epsilon}^{i_{\omega}}}\right)^{-1}\mathcal{D}_{ D_\epsilon}^{d_{\omega}}\right\|_{\mathcal{L}\left(L^2(\partial D_\epsilon), L^2(\partial D_\epsilon) \right)}<1$
and hence by using the Neumann series we obtain the following bound
\begin{eqnarray*}
 \left\|\left(I+\left(\frac{1}{2}I+{\mathcal{D}_{ D_\epsilon}^{i_{\omega}}}\right)^{-1}\mathcal{D}_{ D_\epsilon}^{d_{\omega}}\right)^{-1}\right\|_{\mathcal{L}\left(L^2(\partial D_\epsilon), L^2(\partial D_\epsilon) \right)}
 &\leq&
 \frac{1}{1-\left\|\left(\frac{1}{2}I+{\mathcal{D}_{ D_\epsilon}^{i_{\omega}}}\right)^{-1}\mathcal{D}_{ D_\epsilon}^{d_{\omega}}\right\|_{\mathcal{L}\left(L^2(\partial D_\epsilon), L^2(\partial D_\epsilon) \right)}}\\
 &\substack{\leq\\ \eqref{norms0dkela}}&\grave{C}_3:=\frac{1}{1-\grave{C}_2 \omega^2\epsilon^2}.\\
\end{eqnarray*}
 By substituting the above and \eqref{optnormdikela} in \eqref{nrm1dbllayer3ela}, we obtain the required result \eqref{nrm1dbllayer31ela}.
\end{proof}
\subsubsection{The multiple obstacle case}\label{gbmulobscasedbl}
\ ~ \ \\
\begin{lemma}\label{stirlingapproxlemma}
 For each $k>0$ and for every $n\in\mathbb{Z}^{+}$ with $n\geq{ke^2}\, [=:N(k)]$ we have $n!\geq k^{n-1}$.
\end{lemma}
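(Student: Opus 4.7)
The plan is to split into two cases according to whether $k\leq 1$ or $k>1$, and in the non-trivial case to invoke the elementary Stirling-type lower bound $n!\geq (n/e)^n$. This bound itself follows from the power series identity $e^n=\sum_{j\geq 0}n^j/j!\geq n^n/n!$, so I would simply quote it without reproof.

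The case $k\leq 1$ is immediate: for every positive integer $n$, one has $k^{n-1}\leq 1\leq n!$, so the inequality holds trivially regardless of how large $n$ needs to be.

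For $k>1$, the hypothesis $n\geq ke^2$ forces $n>e^2>7$, hence in particular $n\geq 2$, and moreover $n/(ek)\geq e$ and $n/k\geq e^2$. Using $n!\geq (n/e)^n$, I would rewrite the ratio to be estimated as
$$\frac{n!}{k^{n-1}}\;\geq\;\frac{(n/e)^n}{k^{n-1}}\;=\;\frac{n}{k}\left(\frac{n}{ek}\right)^{n-1},$$
and the two factors on the right are respectively at least $e^2$ and $e^{n-1}$, so the ratio is strictly larger than $1$.

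There is no genuine obstacle in this argument; the only point worth highlighting is that the specific constant $e^2$ in the threshold $N(k)=ke^2$ is exactly what is needed to absorb the factor $1/e^n$ coming out of the Stirling-type bound $n!\geq (n/e)^n$. This is precisely why the same constant $e^2$ appears earlier in the paper, in the definition of the integer $N_\Omega=[2\,\mathrm{diam}(\Omega)\max\{\kappa_{s^\omega},\kappa_{p^\omega}\}e^2]$ used in the invertibility threshold for the algebraic system \eqref{fracqcfracmain}.
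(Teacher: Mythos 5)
Your approach is correct in spirit but genuinely different from the paper's, and it is cleaner. The paper works from $n\geq ke^2\Rightarrow \ln k\leq\ln n-2$, manipulates this into $(n-1)\ln k\leq\ln\sqrt{2\pi}+(n+\tfrac12)\ln n-n$, and then invokes the full Stirling lower bound $\sqrt{2\pi n}\,(n/e)^n\leq n!$ (citing an external reference for that inequality). You instead split on $k\leq 1$ (trivial, since $k^{n-1}\leq 1\leq n!$) and $k>1$, and in the nontrivial case use only the elementary bound $n!\geq (n/e)^n$ coming from $e^n=\sum_{j\geq 0}n^j/j!\geq n^n/n!$, which makes the proof self-contained and dispenses with the $\sqrt{2\pi n}$ factor entirely. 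That buys you a shorter, reference-free argument.

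There is, however, a small algebra slip in your decomposition. You wrote
\[
\frac{(n/e)^n}{k^{n-1}}=\frac{n}{k}\Bigl(\frac{n}{ek}\Bigr)^{n-1},
\]
but the left side equals $\dfrac{n^n}{e^n k^{n-1}}$ while the right side equals $\dfrac{n^n}{e^{n-1}k^n}$; they differ by a factor of $k/e$. The correct identity is
\[
\frac{(n/e)^n}{k^{n-1}}=\frac{n}{e}\Bigl(\frac{n}{ek}\Bigr)^{n-1}.
\]
The conclusion is unaffected: for $k>1$ and $n\geq ke^2$ one has $n/e\geq ke>e>1$ and $n/(ek)\geq e$, so $(n/(ek))^{n-1}\geq 1$, and the product still strictly exceeds $1$. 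Your stated lower bound ``$n/k\geq e^2$'' is true but attaches to the wrong factor; after the correction the first factor is only guaranteed to be at least $e$, which is still more than enough.
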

\begin{proof}{\it{of Lemma \ref{stirlingapproxlemma}}.} The result is true for $n=1$. The proof goes as follows for $n>1$:
\begin{eqnarray*}
 n\geq {ke^2} &\Longrightarrow&\ln{k}\leq\ln{n}-2\\
 &\footnotemark\Longrightarrow&\ln{k}\leq \frac{\ln\sqrt{2\pi}-n}{n-1}+\frac{\left(n+\frac{1}{2}\right)}{n-1}\ln{n}  \\
 &\Longrightarrow&(n-1)\ln{k}\leq \ln\sqrt{2\pi}+\left(n+\frac{1}{2}\right)\ln{n}-n\\
 &\Longrightarrow& k^{n-1}\leq\sqrt{2\pi\,n}\left(\frac{n}{e}\right)^n,
\end{eqnarray*}
\footnotetext{Since,$\frac{\left(n+\frac{1}{2}\right)}{n-1}>1,\frac{\ln\sqrt{2\pi}}{n-1}>0$ and  $0<\frac{n}{n-1}<2$ for $n>1$.}
Now, we obtain the result using  Stirling’s approximation $n!\sim\sqrt{2\pi\,n}\left(\frac{n}{e}\right)^n$, precisely $\sqrt{2\pi\,n}\left(\frac{n}{e}\right)^n\leq{n!}$,
see \cite{NV:Stirling:AMM:1986} for instance.
\end{proof}
\begin{proposition}\label{propsmjsmmestdblela}
 For $m,j=1,2,\dots,M$, the operator $\mathcal{D}_{mj}:L^2(\partial D_j)\rightarrow L^{2}(\partial D_m)$ defined in Proposition \ref{existence-of-sigmasdbl}, see \eqref{defofDmjed},
enjoys the following estimates,
\begin{itemize}
\item
For $j=m$,
\begin{eqnarray}\label{estinvsmmdblela}
 \left\|\left(\frac{1}{2}I+\mathcal{D}_{mm}\right)^{-1}\right\|_{\mathcal{L}\left(L^2(\partial D_m), L^2(\partial D_m) \right)}&\leq&\grave{C}_{6m}, \label{invfnormD_{ii}1ela}
\end{eqnarray}
where $\grave{C}_{6m}:=\frac{4\pi\left\|\left(\frac{1}{2}I+\mathcal{D}^{i_{\omega}}_{B_m}\right)^{-1}\right\|_{\mathcal{L}\left(L^2(\partial B_m), L^2(\partial B_m) \right)}}
{4\pi-\left[\frac{4\lambda+17\mu}{2c_s^4}+\frac{12\lambda+9\mu}{2c_p^4}\right]\omega^2\epsilon^2|\partial B_m|\left\|\left(\frac{1}{2}I+\mathcal{D}^{i_{\omega}}_{B_m}\right)^{-1}\right\|_{\mathcal{L}\left(L^2(\partial B_m), L^2(\partial B_m) \right)}}$.
\item For $j\neq m$,
\begin{eqnarray}\label{estinvsmjdblela}
\left\|\mathcal{D}_{mj}\right\|_{\mathcal{L}\left(L^2(\partial D_j), L^2(\partial D_m)\right)}
                          &\leq&\left[\frac{\tilde{C}_7}{d^2}+\tilde{C}_8\right]\frac{1}{4\pi}
                                                                             \left|\partial \c{B}\right|\epsilon^{2},\label{fnormD_{ij}21ela}
\end{eqnarray}
where 
\begin{eqnarray*}\left|\partial \c{B} \right|:=\max\limits_m \partial B_m,&\qquad&\tilde{C}_7:=\left(\frac{\lambda+6\mu}{c_s^2}+\frac{2\lambda+6\mu}{c_p^2}\right) \mbox{ and  }\end{eqnarray*}
\begin{eqnarray*}
\tilde{C}_8&:=&\frac{\omega^2}{c_s^4}\left(\frac{\mu}{2}+(\lambda+4\mu)\frac{1-\left(\frac{1}{2}\kappa_{s^{\omega}}diam(\Omega)\right)^{N_{\Omega}}}{1-\frac{1}{2}\kappa_{s^{\omega}}diam(\Omega)}+(\lambda+4\mu)\frac{1}{2^{N_{\Omega}-1}}\right)\nonumber\\
&&\qquad+\frac{\omega^2}{c_p^4}\left(\frac{\mu}{2}+\frac{(3\lambda+4\mu)}{2}\frac{1-\left(\frac{1}{2}\kappa_{p^{\omega}}diam(\Omega)\right)^{N_{\Omega}}}{1-\frac{1}{2}\kappa_{p^{\omega}}diam(\Omega)}+\frac{(3\lambda+4\mu)}{2}\frac{1}{2^{N_{\Omega}-1}}\right)
\end{eqnarray*}
with $N_{\Omega}=[2diam(\Omega)\max\{\kappa_{s^\omega},\kappa_{p^\omega}\}e^2]$, where $[\cdot]$ denotes the integral part.
\end{itemize}
\end{proposition}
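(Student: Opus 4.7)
The two estimates are of very different natures and I would handle them separately, with essentially all the real work concentrated in the off-diagonal case.

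For the diagonal estimate $j=m$ there is nothing new to do. The operator $\mathcal{D}_{mm}$ is precisely the single-obstacle double-layer operator $\mathcal{D}_{D_\epsilon}$ of Section \ref{singleobscasedbl} with the choice $B=B_m$, $z=z_m$, and Lemma \ref{lemmanrm1dbllayer31ela} applies verbatim. The constant $\grave{C}_{6m}$ appearing in \eqref{estinvsmmdblela} is just $\grave{C}_6$ with every occurrence of $B$ replaced by $B_m$, so the bound is an immediate transcription.

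For $j\neq m$, the decisive geometric fact is $\mathrm{dist}(D_m,D_j)\geq d>0$, which makes the kernel $\frac{\partial\Gamma^\omega(t,s)}{\partial\nu_j(s)}$ smooth and uniformly bounded on $\partial D_m\times\partial D_j$. I would first establish a pointwise bound of the form
\begin{equation*}
\left|\frac{\partial\Gamma^\omega(t,s)}{\partial\nu_j(s)}\right|\;\leq\;\frac{1}{4\pi}\left[\frac{\tilde{C}_7}{d^2}+\tilde{C}_8\right],\qquad t\in\partial D_m,\;s\in\partial D_j,
\end{equation*}
and then invoke Cauchy--Schwarz twice to conclude
\begin{equation*}
\|\mathcal{D}_{mj}\sigma_j\|_{L^2(\partial D_m)}\;\leq\;|\partial D_m|^{1/2}|\partial D_j|^{1/2}\cdot\frac{1}{4\pi}\left[\frac{\tilde{C}_7}{d^2}+\tilde{C}_8\right]\|\sigma_j\|_{L^2(\partial D_j)}.
\end{equation*}
Since $|\partial D_m|,|\partial D_j|\leq\epsilon^2|\partial\c{B}|$, this yields \eqref{estinvsmjdblela} with the advertised $\epsilon^2$ scaling.

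The main technical step is therefore the pointwise kernel bound, and this is where I expect all the difficulty to lie. Writing $\frac{\partial\Gamma^\omega}{\partial\nu_j(s)}$ through the conormal formula \eqref{conormaldblela} and inserting the gradient expansion \eqref{gradkupradzeten1}, I would isolate the $l=0$ term, which is the unique one that produces a $|t-s|^{-2}\leq d^{-2}$ behaviour; matching the Lam\'e weights $\lambda$ and $\mu$ from the $I_1$- and $I_2$-type structures of \eqref{elaI_1}--\eqref{elaI_2} recovers exactly $\tilde{C}_7=\frac{\lambda+6\mu}{c_s^2}+\frac{2\lambda+6\mu}{c_p^2}$. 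For the remaining series $l\geq 1$, the bound $|t-s|\leq\mathrm{diam}(\Omega)$ reduces the pointwise estimate to controlling $\sum_{l\geq 1}\frac{(\kappa\,\mathrm{diam}(\Omega))^l}{l!(l+2)}$ with $\kappa\in\{\kappa_{s^\omega},\kappa_{p^\omega}\}$. I would split this sum at $l=N_\Omega$: for $1\leq l<N_\Omega$ the partial sum is dominated by the geometric series with ratio $\tfrac12\kappa\,\mathrm{diam}(\Omega)$, producing the first two summands in each parenthesis of $\tilde{C}_8$; for $l\geq N_\Omega$, Lemma \ref{stirlingapproxlemma} applied with $k=2\,\mathrm{diam}(\Omega)\max\{\kappa_{s^\omega},\kappa_{p^\omega}\}$ gives $l!\geq k^{l-1}$, so each remaining term is bounded by $2^{-(l-1)}$ and the tail sums to the residual factor $2^{-(N_\Omega-1)}$. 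Careful book-keeping of $\lambda,\mu,c_s,c_p$ for the $\kappa_{s^\omega}$- and $\kappa_{p^\omega}$-series yields the full expression for $\tilde{C}_8$ as stated.
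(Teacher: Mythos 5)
Your proposal is correct and follows essentially the same route as the paper: for $j=m$ the estimate is a direct transcription of Lemma \ref{lemmanrm1dbllayer31ela}, and for $j\neq m$ the paper likewise bounds the kernel pointwise via the conormal formula and the expansion \eqref{gradkupradzeten1}, isolating the $l=0$ term for the $d^{-2}$ contribution (the $l=1$ term vanishes because of the $(l-1)$ factor), splitting the remaining series at $N_\Omega$ with Lemma \ref{stirlingapproxlemma}, and closing with Cauchy--Schwarz and the scalings $|\partial D_m|=\epsilon^2|\partial B_m|$. The only cosmetic difference is that the paper interleaves the Cauchy--Schwarz step term-by-term within the series rather than stating a single uniform kernel bound first, but the bookkeeping and the resulting constants $\tilde C_7$, $\tilde C_8$ are identical.
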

\begin{proof}{\it{of Proposition \ref{propsmjsmmestdblela}}.}
The estimate \eqref{estinvsmmdblela} is nothing but \eqref{nrm1dbllayer31ela} of Lemma \ref{lemmanrm1dbllayer31ela}, replacing $B$ by $B_m$, $z$ by $z_m$ and $D_\epsilon$ by $D_m$ respectively. It remains to prove the estimate \eqref{estinvsmjdblela}.
We have
\begin{eqnarray}\label{fnormD_{ij}ela}
 \left\|\mathcal{D}_{mj}\right\|_{\mathcal{L}\left(L^2(\partial D_j), L^2(\partial D_m)\right)}&=&
\substack{Sup\\ \psi(\neq0)\in L^{2}(\partial D_j)} \frac{||\mathcal{D}_{mj}\psi||_{L^2(\partial D_m)}}{||\psi||_{L^2(\partial D_j)}}.
\end{eqnarray}
\noindent
Let $\psi\in L^2(\partial D_j)$ then for $s\in \partial D_m$,  we have
\begin{eqnarray}\label{D_{ij}ela}
\mathcal{D}_{mj}\psi(s)&=&\int_{\partial D_j}\frac{\partial\Gamma^\omega(s,t)}{\partial \nu_j(t)}\psi(t)dt\nonumber\\
&=&\int_{\partial D_j}\left[\lambda\left(\div_t\,[\Gamma^{\omega}(s,t)]\right)N_t+\mu\left(\nabla_t[\Gamma^{\omega}(s,t)]+(\nabla_t[\Gamma^{\omega}(s,t)])^{\top}\right)N_t\right]\psi(t) dt\nonumber\\
&=&\int_{\partial D_j}\left[\lambda\, I_{1'}\otimes\,N_t+\mu\left(I_{2'}+I_{2'}^{\top}\right)N_t\right]{\psi}(t) dt,
\end{eqnarray}
where the vector $I_{1'}$ and the third order tensor $I_{2'}$ are given by
\begin{eqnarray}
I_{1'}&=&-\frac{1}{4\pi}\sum_{l=0}^{\infty}\frac{i^l}{l!(l+2)}\frac{(l-1)}{\omega^2}\left[
-2\kappa_{s^{\omega}}^{l+2}+(l+4)\kappa_{p^{\omega}}^{l+2}\right]|s-t|^{l-3}(s-t),\label{elaI_11}\\
I_{2'}&=&-\frac{1}{4\pi}\sum_{l=0}^{\infty}\frac{i^l}{l!(l+2)}\frac{(l-1)}{\omega^2}\left[
\left((l+1)\kappa_{s^{\omega}}^{l+2}+\kappa_{p^{\omega}}^{l+2}\right)|s-t|^{l-3} (s-t)\otimes\rm \textbf{I}\right. \nonumber\\
       &&  \left.-\left(\kappa_{s^{\omega}}^{l+2}-\kappa_{p^{\omega}}^{l+2}\right)|s-t|^{l-3}\left((l-3)|s-t|^{-2}\otimes^3(s-t)+\rm \textbf{I}\otimes(s-t)+(s-t)\otimes\rm \textbf{I}\right)\right].\label{elaI_21}
\end{eqnarray}
\noindent
\newline
Then, by using Lemma \ref{stirlingapproxlemma}, we estimate
\noindent
\begin{eqnarray}\label{modD_{ij}ela}
\left|\mathcal{D}_{mj}\psi(s)\right|
&\leq&\frac{\lambda}{4\pi}\left[\frac{\left(\kappa_{s^{\omega}}^{2}+2\kappa_{p^{\omega}}^{2}\right)}{\omega^2}\int_{\partial D_j}|s-t|^{-2}\vert\psi(t)\vert dt\right. \nonumber\\
&&\qquad\left.+\sum_{l=2}^{\infty}\frac{1}{l!(l+2)}\frac{(l-1)}{\omega^2}\left(2\kappa_{s^{\omega}}^{l+2}+(l+4)\kappa_{p^{\omega}}^{l+2}\right)\int_{\partial D_j}|s-t|^{l-2}\vert\psi(t)\vert dt\right] \nonumber\\
&&+\frac{2\mu}{4\pi}\left[\frac{3\left(\kappa_{s^{\omega}}^{2}+\kappa_{p^{\omega}}^{2}\right)}{\omega^2}\int_{\partial D_j}|s-t|^{-2}\vert\psi(t)\vert dt
+\frac{\left(6\kappa_{s^{\omega}}^{4}+4\kappa_{p^{\omega}}^{4}\right)}{8\omega^2}\int_{\partial D_j}\vert\psi(t)\vert dt\right.\nonumber\\
&&\qquad\left.+\sum_{l=3}^{\infty}\frac{1}{l!(l+2)}\frac{(l-1)}{\omega^2}\left(2l\kappa_{s^{\omega}}^{l+2}+l\kappa_{p^{\omega}}^{l+2}\right)
\int_{\partial D_j}|s-t|^{l-2}\vert\psi(t)\vert dt\right] \nonumber\\
&\leq&\frac{\lambda}{4\pi}\|\psi\|_{L^2(D_j)}\vert\partial D_j\vert^\frac{1}{2}\nonumber\\
&&\left[\frac{\left(\kappa_{s^{\omega}}^{2}+2\kappa_{p^{\omega}}^{2}\right)}{\omega^2}\frac{1}{d^2_{mj}}+\sum_{l=2}^{\infty}\frac{1}{l!(l+2)}\frac{(l-1)}{\omega^2}\left(2\kappa_{s^{\omega}}^{l+2}+(l+4)\kappa_{p^{\omega}}^{l+2}\right)
diam(\Omega)^{l-2}\right] \nonumber\\
&&+\frac{2\mu}{4\pi}\|\psi\|_{L^2(D_j)}\vert\partial D_j\vert^\frac{1}{2}\nonumber\\
&&\left[\frac{3\left(\kappa_{s^{\omega}}^{2}+\kappa_{p^{\omega}}^{2}\right)}{\omega^2}\frac{1}{d^2_{mj}}+\frac{\left(\kappa_{s^{\omega}}^{4}+\kappa_{p^{\omega}}^{4}\right)}{4\omega^2}+\sum_{l=2}^{\infty}\frac{1}{(l-2)!(l+2)}\frac{1}{\omega^2}\left(2\kappa_{s^{\omega}}^{l+2}+\kappa_{p^{\omega}}^{l+2}\right)
diam(\Omega)^{l-2}\right] \nonumber\\
&\leq&\frac{\lambda}{4\pi}\|\psi\|_{L^2(D_j)}\vert\partial D_j\vert^\frac{1}{2}\left[\frac{1}{d^2_{mj}}\left(\frac{1}{c_s^2}+\frac{2}{c_p^2}\right)+\frac{\omega^2}{c_s^4}\sum_{l=2}^{N_{\Omega}+1}\left(\frac{1}{2}\kappa_{s^{\omega}}diam(\Omega)\right)^{l-2}+\frac{\omega^2}{c_s^4}\right.\nonumber\\
&&\left.\hspace{.5cm}\sum_{l=N_{\Omega}+2}^{\infty}\left(\frac{1}{2}\right)^{l-2}+\frac{3}{2}\frac{\omega^2}{c_p^4}\sum_{l=2}^{N_{\Omega}+1}\left(\frac{1}{2}\kappa_{p^{\omega}}diam(\Omega)\right)^{l-2}+\frac{3}{2}\frac{\omega^2}{c_p^4}\sum_{l=N_{\Omega}+2}^{\infty}\left(\frac{1}{2}\kappa_{p^{\omega}}diam(\Omega)\right)^{l-2}\right] \nonumber\\
&&+\frac{2\mu}{4\pi}\|\psi\|_{L^2(D_j)}\vert\partial D_j\vert^\frac{1}{2}\left[\frac{3}{d^2_{mj}}\left(\frac{1}{c_s^2}+\frac{1}{c_p^2}\right)+\frac{\omega^2}{4}\left(\frac{1}{c_s^4}+\frac{1}{c_p^4}\right)+\frac{2\omega^2}{c_s^4}\sum_{l=2}^{N_{\Omega}+1}\left(\frac{1}{2}\kappa_{s^{\omega}}diam(\Omega)\right)^{l-2}\right.\nonumber\\
&&\left.\hspace{2cm}+\frac{2\omega^2}{c_s^4}\sum_{l=N_{\Omega}+2}^{\infty}\left(\frac{1}{2}\right)^{l-2}+\frac{\omega^2}{c_p^4}\sum_{l=2}^{N_{\Omega}+1}\left(\frac{1}{2}\kappa_{p^{\omega}}diam(\Omega)\right)^{l-2}
+\frac{\omega^2}{c_p^4}\sum_{l=N_{\Omega}+2}^{\infty}\left(\frac{1}{2}\right)^{l-2}\right] \nonumber\\
&=&\frac{1}{4\pi}\|\psi\|_{L^2(D_j)}\vert\partial D_j\vert^\frac{1}{2}\left[\frac{1}{d^2_{mj}}\left(\frac{\lambda+6\mu}{c_s^2}+\frac{2\lambda+6\mu}{c_p^2}\right)\right.\nonumber\\
&&\qquad\left.+\frac{\omega^2}{c_s^4}\left(\frac{\mu}{2}+(\lambda+4\mu)\sum_{l=0}^{N_{\Omega}-1}\left(\frac{1}{2}\kappa_{s^{\omega}}diam(\Omega)\right)^{l}+(\lambda+4\mu)\sum_{l=N_{\Omega}}^{\infty}\left(\frac{1}{2}\right)^{l}\right)\right.\nonumber\\
&&\qquad\left.+\frac{\omega^2}{c_p^4}\left(\frac{\mu}{2}+\frac{(3\lambda+4\mu)}{2}\sum_{l=0}^{N_{\Omega}-1}\left(\frac{1}{2}\kappa_{p^{\omega}}diam(\Omega)\right)^{l}+\frac{(3\lambda+4\mu)}{2}\sum_{l=N_{\Omega}}^{\infty}\left(\frac{1}{2}\right)^{l}\right)\right]\nonumber\\
&\leq&\frac{\epsilon\|\psi\|_{L^2(D_j)}\vert\partial B_j\vert^\frac{1}{2}}{4\pi}\left[\frac{1}{d^2_{mj}}\left(\frac{\lambda+6\mu}{c_s^2}+\frac{2\lambda+6\mu}{c_p^2}\right)\right.\nonumber\\
&&\qquad\left.+\frac{\omega^2}{c_s^4}\left(\frac{\mu}{2}+(\lambda+4\mu)\frac{1-\left(\frac{1}{2}\kappa_{s^{\omega}}diam(\Omega)\right)^{N_{\Omega}}}{1-\frac{1}{2}\kappa_{s^{\omega}}diam(\Omega)}+(\lambda+4\mu)\frac{1}{2^{N_{\Omega}-1}}\right)\right.\nonumber\\
&&\qquad\left.+\frac{\omega^2}{c_p^4}\left(\frac{\mu}{2}+\frac{(3\lambda+4\mu)}{2}\frac{1-\left(\frac{1}{2}\kappa_{p^{\omega}}diam(\Omega)\right)^{N_{\Omega}}}{1-\frac{1}{2}\kappa_{p^{\omega}}diam(\Omega)}+\frac{(3\lambda+4\mu)}{2}\frac{1}{2^{N_{\Omega}-1}}\right)\right]\nonumber\\
&=&\frac{1}{4\pi}\left[\frac{\tilde{C}_7}{d^2_{mj}}+\tilde{C}_8\right]\epsilon~\left|\partial B_j\right|^\frac{1}{2}||\psi||_{L^2(\partial D_j)}
\end{eqnarray}
form which, we get
\begin{eqnarray}\label{fL2normD_{ij}ela}
\left\|\mathcal{D}_{mj}\psi\right\|_{L^2(\partial D_m)}
                                                                  &=&\left(\int_{\partial D_m}\left|\mathcal{D}_{mj}\psi(s)\right|^2 ds\right)^\frac{1}{2}\nonumber\\
                                                                  &\substack{\leq\\\eqref{modD_{ij}ela}}&\left[\frac{\tilde{C}_7}{d^2_{mj}}+\tilde{C}_8\right]\frac{1}{4\pi}\epsilon
                                                                             \left|\partial B_j\right|^\frac{1}{2}||\psi||_{L^2(\partial D_j)}\left(\int_{\partial D_m} ds\right)^\frac{1}{2}\nonumber\\
                                                                  &=&\left[\frac{\tilde{C}_7}{d^2_{mj}}+\tilde{C}_8\right]\frac{1}{4\pi}\epsilon^2
                                                                             \left|\partial B_j\right|^\frac{1}{2}\left|\partial B_m\right|^\frac{1}{2}||\psi||_{L^2(\partial D_j)}.
\end{eqnarray}
\noindent
Substitution of \eqref{fL2normD_{ij}ela} in \eqref{fnormD_{ij}ela} gives us
\begin{eqnarray*}\label{fnormD_{ij}1ela}
 \left\|\mathcal{D}_{mj}\right\|_{\mathcal{L}\left(L^2(\partial D_j), L^2(\partial D_m)\right)}
                         &\leq&\left[\frac{\tilde{C}_7}{d^2_{mj}}+\tilde{C}_8\right]\frac{1}{4\pi\,d^2_{mj}}\epsilon^2
                                                                            \left|\partial B_j\right|^\frac{1}{2}\left|\partial B_m\right|^\frac{1}{2}\nonumber\\
                          &\leq&\left[\frac{\tilde{C}_7}{d^2}+\tilde{C}_8\right]\frac{1}{4\pi}
                                                                             \left|\partial \c{B}\right|\epsilon^{2}.
\end{eqnarray*}
\end{proof}

\begin{proofe}
\textbf{\textit{End of the proof of Proposition  \ref{normofsigmastmtdblela}}.}
By substituting \eqref{invfnormD_{ii}1ela} in \eqref{invDLnrm} and \eqref{fnormD_{ij}21ela} in \eqref{DKnrm}, we obtain
\begin{eqnarray}
\left\|\left(\frac{1}{2}I+DL\right)^{-1}\right\|
     &\leq&\max\limits_{m=1}^{M}\grave{C}_{6m}\label{invDLnrm1ela}
\end{eqnarray}
and
\begin{eqnarray}
\left\|DK\right\|
    &\leq&\frac{M-1}{4\pi}\left[\frac{\tilde{C}_7}{d^2}+\tilde{C}_8\right]\left|\partial \c{B}\right|\epsilon^{2}\label{DKnrm1ela}.
\end{eqnarray}
Hence, \eqref{DKnrm1ela} and \eqref{invDLnrm1ela} jointly provide
\begin{eqnarray}
\left\|\left(\frac{1}{2}I+DL\right)^{-1}\right\|\left\|DK\right\|
    &\leq&\underbrace{\frac{M-1}{4\pi}\left[\frac{\tilde{C}_7}{d^2}+\tilde{C}_8\right]\left(\max\limits_{m=1}^{M}\grave{C}_{6m}\right)\left|\partial \c{B}\right|\epsilon^2}_{=:\grave{C}_s} ,\label{invDLDKnrm1ela}
\end{eqnarray}
By imposing the condition $\left\|\left(\frac{1}{2}I+DL\right)^{-1}\right\|\left\|DK\right\|<1$, we get the following from \eqref{nrminvDLplusDK} and (\ref{sigmaU^{In}nrmdbl-elasmall}-\ref{sigmaU^{In}nrmdbl1-elasmall})
\begin{eqnarray}\label{nrminvDLplusDK2ela}
 \left\|\sigma_m\right\|_{L^{2}(\partial D_m)}\leq\left\|\sigma\right\|
                                  &\leq&\frac{\left\|\left(\frac{1}{2}I+DL\right)^{-1}\right\|}{1-\left\|\left(\frac{1}{2}I+DL\right)^{-1}\right\|\left\|DK\right\|}\left\|U^{In}\right\|\nonumber\\
                   &\leq&\grave{C}_p\left\|\left(\frac{1}{2}I+DL\right)^{-1}\right\| \max\limits_{m=1}^{M}\left\|U^{i}\right\|_{L^{2}(\partial D_m)}\hspace{.25cm} \left( \grave{C}_p\geq\frac{1}{1-\grave{C}_s}\right)\nonumber\\
                   &\substack{\leq\\ \eqref{invDLnrm1ela} }&\grave{\mathrm{C}}  \max\limits_{m=1}^{M}\left\|U^{i}\right\|_{L^{2}(\partial D_m)}\hspace{.25cm} \left(\grave{\mathrm{C}}:=\grave{C}_p\max\limits_{m=1}^{M}\grave{C}_{6m}\right),
\end{eqnarray}
for all $m\in\{1,2,\dots,M\}$. But, for the plane incident wave of the Lam\'e system, $U^{i}(x,\theta):=\alpha\theta\,e^{\left(i\omega\,x\cdot\theta\slash\,c_p\right)}+\beta\theta^\bot\,e^{\left(i\kappa\,x\cdot\theta\slash\,c_s\right)}$, we have
\begin{eqnarray}\label{L2normU^iela}
\left\|U^{i}\right\|_{L^{2}(\partial D_m)}&\leq&(|\alpha|+|\beta|)~\epsilon~\left|\partial B_m\right|^{\frac{1}{2}}\,\leq\,(|\alpha|+|\beta|)~\epsilon~\left|\partial \c{B}\right|^{\frac{1}{2}},\, \forall m=1,2,\dots,M.
\end{eqnarray}
Now by substituting \eqref{L2normU^iela} in \eqref{nrminvDLplusDK2ela}, for each $m=1,\dots,M$, we obtain
\begin{eqnarray}\label{nrmsigmafdblela}
  \left\|\sigma_m\right\|_{L^{2}(\partial D_m)}&\,\leq\,&  \grave{\mathcal{C}}(\omega)\epsilon,
\end{eqnarray}
where $\hspace{.25cm}\grave{\mathcal{C}}(\omega):=\grave{\mathrm{C}} \left|\partial \c{B}\right|^{\frac{1}{2}}(|\alpha|+|\beta|)$.
\par
The condition $\left\|\left(\frac{1}{2}I+DL\right)^{-1}\right\|\left\|DK\right\|<1$ is
satisfied if $\grave{C}_s<1$, i.e.
\begin{eqnarray}\label{Accond-invLK-singl-smalldblela}
\frac{M-1}{4\pi}\left[\frac{\tilde{C}_7}{d^2}+\tilde{C}_8\right]\left|\partial \c{B}\right|\left(\max\limits_{m=1}^{M}\grave{C}_{6m}\right)\epsilon^2<1.
\end{eqnarray}
The condition \eqref{Accond-invLK-singl-smalldblela}
reads as $\sqrt{M-1}\epsilon<\grave{c}d$ where we set $$\grave{c}:=\left[\frac{1}{4\pi}\left[\tilde{C}_7+\tilde{C}_8{d^2_{\max}}\right]|\partial\c{B}|\max\limits_{m=1}^{M}\grave{C}_{6m}\right]^{-\frac{1}{2}}$$
and it serves our purpose in Proposition \ref{normofsigmastmtdblela}  and hence in Theorem \ref{Maintheorem-ela-small-sing}.
\end{proofe}
\subsection{The single layer potential representation and the total charge}
\subsubsection{The single layer potential representation}
For $m=1,2,\dots,M$, let $U^{\sigma_m}$ be the solution of the problem
\begin{equation}\label{elaimpenetrableUsigma}
\begin{cases}
(\Delta^e + \omega^{2})U^{\sigma_m}=0& \mbox{ in }D_m,\\
U^{\sigma_m}=\sigma_m& \mbox{ on } \partial D_m.
\end{cases}
\end{equation}
The function $\sigma_m$ is in $H^{1}(\partial D_m)$, see Proposition \ref{existence-of-sigmasdbl}. Hence $U^{\sigma_m}\in\,H^{\frac{3}{2}}(D_m)$ and then $\left.\frac{\partial\,U^{\sigma_{m}}}{\partial \nu_m}\right|_{\partial D_m}\in\,L^2(\partial D_m)$.
From Proposition  \ref{existence-of-sigmasdbl}, the solution of the problem (\ref{elaimpoenetrable}-\ref{radiationcela}) has the form
 \begin{equation}\label{qcimprequiredfrm2dblela}
  U^{t}(x)=U^{i}(x)+\sum_{m=1}^{M}\int_{\partial D_m}\frac{\partial\Gamma^\omega(x,s)}{\partial \nu_m(s)}\sigma_{m} (s)ds,~x\in\mathbb{R}^{3}\backslash\left(\mathop{\cup}_{m=1}^M \bar{D}_m\right).
\end{equation}
It can be written in terms of single layer potanetial using Gauss's theorem as
 \begin{equation}\label{qcimprequiredfrm3dblela}
  U^{t}(x)=U^{i}(x)+\sum_{m=1}^{M}\int_{\partial D_m}\Gamma^\omega(x,s)\frac{\partial\,U^{\sigma_{m}} (s)}{\partial \nu_m(s)}ds,~x\in\mathbb{R}^{3}\backslash\left(\mathop{\cup}_{m=1}^M \bar{D}_m\right).
\end{equation}
\begin{description}
\item Indeed, by Betti's third identity,
\begin{eqnarray}\label{revised-greensform-DtN-ela-small}
 \int_{\partial D_m}\frac{\partial\Gamma^\omega(x,s)}{\partial \nu_m(s)}\sigma_{m} (s)ds&=&\int_{\partial D_m}\Gamma^\omega(x,s)\frac{\partial\,U^{\sigma_{m}} (s)}{\partial \nu_m(s)}ds\nonumber\\
 &&\hspace{.5cm}+\int_{D_m}\left[U^{\sigma_{m}} (y)\Delta^e\,\Gamma^\omega(x,y)-\Gamma^\omega(x,y)\Delta^e\,U^{\sigma_{m}} (y)\right]\,dy.
\end{eqnarray}
\end{description}
\begin{lemma}\label{thmestmdhoudhonuela}
For $m=1,2,\dots,M$, $U^{\sigma_m}$, the solutions of the problem \eqref{elaimpenetrableUsigma}, satisfies the estimate
\begin{equation}\label{estmdhoudhonuela}
 \left\|\frac{\partial\,U^{\sigma_{m}} (s)}{\partial \nu_m(s)}\right\|_{H^{-1}(\partial D_m)}\le C_7,
\end{equation}
for some constant $C_7$ depending on $B_m$ through its Lipschitz character but it is independent of $\epsilon$.
\end{lemma}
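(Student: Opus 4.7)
\textbf{Proof plan for Lemma \ref{thmestmdhoudhonuela}.}
The plan is to rescale the interior Dirichlet problem \eqref{elaimpenetrableUsigma} to the fixed reference body $B_m$, prove the required estimate there with a constant depending only on the Lipschitz character of $B_m$, and then transport it back to $D_m$ using the scaling identities of Lemma \ref{L2H1estimates}. The point is that on $B_m$ every Sobolev constant is $\epsilon$-free, so all $\epsilon$ dependence must be tracked through the change of variables.

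\textbf{Step 1 (rescaling).} Following \eqref{elasmalla-wedgevee-defntn}, set $\hat U(\xi):=U^{\sigma_m}(\epsilon\xi+z_m)$ and $\hat\sigma_m(\xi):=\sigma_m(\epsilon\xi+z_m)$. Then $\hat U$ solves
\begin{equation*}
(\Delta^e+\epsilon^2\omega^2)\hat U=0 \text{ in } B_m,\qquad \hat U=\hat\sigma_m \text{ on }\partial B_m.
\end{equation*}
A direct chain-rule computation on the co-normal operator \eqref{conormaldblela} gives $\left.\frac{\partial U^{\sigma_m}}{\partial \nu_m}\right|_{x}=\epsilon^{-1}\left.\frac{\partial \hat U}{\partial\hat\nu}\right|_{\xi}$, while $ds_x=\epsilon^{2}ds_\xi$. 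Hence for any test function $\phi\in H^1(\partial D_m)$,
\begin{equation*}
\int_{\partial D_m}\frac{\partial U^{\sigma_m}}{\partial\nu_m}\cdot\phi\,ds_x \;=\;\epsilon\int_{\partial B_m}\frac{\partial \hat U}{\partial\hat\nu}\cdot\hat\phi\,ds_\xi.
\end{equation*}

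\textbf{Step 2 (reference-body estimate).} On $B_m$, which is fixed Lipschitz and whose size is bounded independently of $\epsilon$, we claim that
\begin{equation*}
\left\|\frac{\partial\hat U}{\partial\hat\nu}\right\|_{H^{-1}(\partial B_m)} \;\leq\; C\,\|\hat\sigma_m\|_{L^2(\partial B_m)},
\end{equation*}
with $C$ depending only on the Lipschitz character of $B_m$ and on $\omega_{\max}\cdot a_0$. Two equivalent ways to see this. Either represent $\hat U$ through the single-layer potential of the Navier equation with wavenumber $\epsilon\omega$: then $\hat\sigma_m = S_{B_m}^{\epsilon\omega}\hat\psi$ and $\partial\hat U/\partial\hat\nu=(-\tfrac12 I+(K_{B_m}^{\epsilon\omega})^{*})\hat\psi$, and invoke the Dahlberg--Kenig--Verchota theory for the Lam\'e system on Lipschitz domains, which for $\epsilon\omega$ small enough (ensured by $a\leq a_0$, avoiding the first Dirichlet eigenvalue as in Proposition \ref{existence-of-sigmasdbl}) gives the bounded invertibility $S_{B_m}^{\epsilon\omega}:L^2(\partial B_m)\to H^1(\partial B_m)$ and of $(-\tfrac12 I+(K_{B_m}^{\epsilon\omega})^{*})$ on $L^2$. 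Or, use Betti's identity in its weak form,
\begin{equation*}
\left\langle \frac{\partial\hat U}{\partial\hat\nu},\hat\phi\right\rangle=\int_{B_m}\bigl[2\mu\,e(\hat U):e(\widetilde{\hat\phi})+\lambda(\div\hat U)(\div\widetilde{\hat\phi})-\epsilon^2\omega^2\hat U\cdot\widetilde{\hat\phi}\bigr]d\xi,
\end{equation*}
for any $H^1(B_m)$ extension $\widetilde{\hat\phi}$ of $\hat\phi$, together with the elliptic a priori estimate $\|\hat U\|_{H^1(B_m)}\leq C\|\hat\sigma_m\|_{H^{1/2}(\partial B_m)}\leq C\|\hat\sigma_m\|_{L^2(\partial B_m)}^{1/2}\|\hat\sigma_m\|_{H^1(\partial B_m)}^{1/2}$ and the continuous extension $H^1(\partial B_m)\to H^1(B_m)$. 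Either argument is $\epsilon$-uniform once $\epsilon\omega\leq a_0\omega_{\max}$ is fixed and a perturbative continuation from the static case $\omega=0$ is performed, exactly as in Lemma \ref{lemmanrm1dbllayer31ela}.

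\textbf{Step 3 (uniform bound on the rescaled density).} Proposition \ref{normofsigmastmtdblela} gives $\|\sigma_m\|_{L^2(\partial D_m)}\leq c\,\epsilon$. Applying the scaling identity \eqref{habib2*} with $n=3$,
\begin{equation*}
\|\hat\sigma_m\|_{L^2(\partial B_m)}=\epsilon^{-1}\|\sigma_m\|_{L^2(\partial D_m)}\leq c,
\end{equation*}
uniformly in $\epsilon$. Combined with Step 2, $\|\partial\hat U/\partial\hat\nu\|_{H^{-1}(\partial B_m)}\leq Cc$.

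\textbf{Step 4 (return to $D_m$).} For any $\phi\in H^1(\partial D_m)$ with $\|\phi\|_{H^1(\partial D_m)}\leq 1$, the lower bound in \eqref{habib1*} yields $\|\hat\phi\|_{H^1(\partial B_m)}\leq\epsilon^{-1}\|\phi\|_{H^1(\partial D_m)}\leq\epsilon^{-1}$, and the identity from Step 1 gives
\begin{equation*}
\left|\int_{\partial D_m}\frac{\partial U^{\sigma_m}}{\partial\nu_m}\cdot\phi\,ds_x\right|\leq\epsilon\left\|\frac{\partial\hat U}{\partial\hat\nu}\right\|_{H^{-1}(\partial B_m)}\|\hat\phi\|_{H^1(\partial B_m)}\leq \epsilon\cdot Cc\cdot\epsilon^{-1}=Cc=:C_7.
\end{equation*}
Taking the supremum over $\phi$ delivers \eqref{estmdhoudhonuela}.

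\textbf{Main obstacle.} The delicate point is Step 2: proving the $L^2(\partial B_m)\to H^{-1}(\partial B_m)$ continuity of the interior Dirichlet-to-Neumann map uniformly in the effective frequency $\epsilon\omega\in[0,a_0\omega_{\max}]$. Because $B_m$ is only Lipschitz, standard smooth elliptic regularity is not at hand; one has to invoke either the Lipschitz boundary integral theory for the Lam\'e system (Dahlberg--Kenig--Verchota, used already in Proposition \ref{existence-of-sigmasdbl}) or a careful perturbation of the static operator $\omega=0$. Once this is granted, the remaining algebra is just bookkeeping of the scaling weights coming from Lemma \ref{L2H1estimates}, and the two factors $\epsilon$ and $\epsilon^{-1}$ cancel, which is exactly the reason why the final constant $C_7$ is $\epsilon$-free.
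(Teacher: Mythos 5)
Your proof is correct and takes essentially the same rescale-and-transport approach as the paper: both blow up to the reference body $B_m$, establish a Dirichlet-to-Neumann bound there, and pull the estimate back to $D_m$ using the scalings of Lemma \ref{L2H1estimates}, with the factor $\epsilon^{-1}$ coming from \eqref{habib1*} cancelled against the $O(\epsilon)$ bound on $\|\sigma_m\|_{L^2(\partial D_m)}$ from Proposition \ref{normofsigmastmtdblela}. The only difference is cosmetic: the paper bounds $\varLambda_{B_m}:H^1(\partial B_m)\to L^2(\partial B_m)$ and dualizes to obtain the $L^2\to H^{-1}$ statement on $\partial D_m$, while you pair against test functions directly; these routes are equivalent. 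One caveat worth fixing: your second alternative in Step 2 does not close as written, since the interpolation $\|\hat\sigma_m\|_{H^{1/2}(\partial B_m)}\leq\|\hat\sigma_m\|_{L^2(\partial B_m)}^{1/2}\|\hat\sigma_m\|_{H^1(\partial B_m)}^{1/2}$ reintroduces the $H^1(\partial B_m)$ norm of the density, which you have not bounded uniformly in $\epsilon$; only the first alternative (Lipschitz layer-potential theory, i.e.\ the bounded DtN map on $B_m$ that the paper also relies upon) carries the argument.
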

\begin{proof}{\it{of Lemma \ref{thmestmdhoudhonuela}}.}
 For $m=1,2,\dots,M$,  we write $$\mathcal{U}^{m}(x):=U^{\sigma_{m}}(\epsilon\,x+z_m),\forall\,x\in\,B_m.$$
 Then we obtain
\begin{equation}\label{mathcalUmeqnela}
\begin{cases}
 (\Delta^e+\epsilon^2\omega^2)\mathcal{U}^{m}(x)&=\epsilon^2(\Delta^e+\omega^2)U^{\sigma_{m}}(\epsilon\,x+z_m)=0,\text{ for } x\in\,B_m,\\
\hspace{1cm}\mathcal{U}^{m}(\xi)&=U^{\sigma_{m}}(\epsilon\,\xi+z_m)=\sigma(\epsilon\,\xi+z_m),\text{ for } \xi\in\partial\,B_m,
\end{cases}
\end{equation}
and also 
\begin{eqnarray*}
&\frac{\partial\mathcal{U}^{m}(\xi)}{\partial\nu_m(\xi)}
&:=\lambda(\div_\xi\mathcal{U}^{m}(\xi))N_m(\xi)+\mu(\nabla_\xi\mathcal{U}^{m}(\xi)+\nabla_\xi\mathcal{U}^{m}(\xi)^{\top})N_m(\xi)\nonumber\\
&=&\epsilon\left[\lambda(\div\,U^{\sigma_{m}}(\epsilon\,\xi+z_m))N_m(\epsilon\,\xi+z_m)+\mu(\nabla\,U^{\sigma_{m}}(\epsilon\,\xi+z_m)+\nabla\,U^{\sigma_{m}}(\epsilon\,\xi+z_m)^{\top})N_m(\epsilon\,\xi+z_m)\right]\nonumber\\
&=&\epsilon\frac{\partial{U}^{\sigma_m}}{\partial\nu_m}(\epsilon\,\xi+z_m).
\end{eqnarray*}
Hence,
\begin{eqnarray*}
 \left\|\frac{\partial\mathcal{U}^{m}}{\partial\nu_m}\right\|^2_{L^2(\partial B_m)}&=&\int_{\partial B_m}\left|\frac{\partial\mathcal{U}^{m}(\eta)}{\partial\nu_m(\eta)}\right|^2d\eta\nonumber\\
&=&\int_{\partial D_m}\epsilon^2\left|\frac{\partial\,U^{\sigma_{m}}(s)}{\partial\nu_m(s)}\right|^2\epsilon^{-2}ds,~[s:=\epsilon\eta+z_m]\nonumber\\
&=& \left\|\frac{\partial\,U^{\sigma_{m}}}{\partial\nu_m}\right\|^2_{L^2(\partial D_m)},
\end{eqnarray*}
 which gives us
\begin{eqnarray}\label{D2Nbasicela}
 \frac{\left\|\frac{\partial\,U^{\sigma_{m}}}{\partial\nu_m}\right\|_{L^{2}(\partial D_m)}}{||U^{\sigma_{m}}||_{H^1(\partial D_m)}}
&\substack{\le \\ \eqref{habib1*}}&\frac{||\frac{\partial\,\mathcal{U}^{{m}}}{\partial\nu_m}||_{L^2(\partial B_m)}}{\epsilon||\mathcal{U}^{m}||_{H^1(\partial B_m)}}.
\end{eqnarray}
For every function $\zeta_m\in\,H^1(\partial D_m)$, the corresponding $U^{\zeta_{m}}$ exists in $D_m$ as mentioned in \eqref{elaimpenetrableUsigma} and
then the corresponding functions $\mathcal{U}^{m}$ in $B_m$ and the inequality \eqref{D2Nbasicela} will be satisfied by these functions.
Let $\varLambda_{D_m}:H^1(\partial D_m)\rightarrow\,L^2(\partial D_m)$ and $\varLambda_{B_m}:H^1(\partial\, B_m)\rightarrow\,L^2(\partial\, B_m)$ be the Dirichlet to Neumann maps.
Then we get the following estimate from \eqref{D2Nbasicela}.
$$\|\varLambda_{D_m}\|_{\mathcal{L}\left(H^1( \partial D_m), L^{2}( \partial D_m) \right)}\leq\frac{1}{\epsilon}\|\varLambda_{B_m}\|_{\mathcal{L}\left(H^1(\partial\, B_m), L^{2}(\partial\, B_m) \right)}.$$
This implies that,
\begin{eqnarray}
 \frac{\left\|\frac{\partial\,U^{\sigma_{m}}}{\partial\nu_m}\right\|_{H^{-1}(\partial D_m)}}{||U^{\sigma_{m}}||_{L^2(\partial D_m)}}&\le&\|\varLambda_{D_m}^{*}\|_{\mathcal{L}\left(L^2( \partial D_m), H^{-1}( \partial D_m) \right)}\nonumber\\
&=&\|\varLambda_{D_m}\|_{\mathcal{L}\left(H^1( \partial D_m), L^{2}( \partial D_m) \right)}\nonumber\\
&\leq&\frac{1}{\epsilon}\|\varLambda_{B_m}\|_{\mathcal{L}\left(H^1(\partial\, B_m), L^{2}(\partial\, B_m) \right)}.
\end{eqnarray}
Now, by \eqref{nrmsigmafdblela} and \eqref{elaimpenetrableUsigma},
\begin{eqnarray}\label{estmdhoudhonuela-1}
 \left\|\frac{\partial\,U^{\sigma_{m}}}{\partial\nu_m}\right\|_{H^{-1}(\partial D_m)}
&\leq&\grave{\mathcal{C}}(\omega)\|\varLambda_{B_m}\|_{\mathcal{L}\left(H^1(\partial\, B_m), L^{2}(\partial\, B_m) \right)}.
\end{eqnarray}
Hence the result is true as  $\|\varLambda_{B_m}\|_{\mathcal{L}\left(H^1(\partial\, B_m), L^{2}(\partial\, B_m) \right)}$ is bounded by a
constant depending only on $B_m$ through its size and Lipschitz character of $B_m$.
\end{proof}
\begin{definition}
\label{elaQmdefdbl}
We call $\sigma_m\in L^2(\partial D_m)$ satisfying \eqref{qcimprequiredfrm1dbl}, the solution of the problem (\ref{elaimpoenetrable}-\ref{radiationcela}), as elastic surface charge distributions (in short surface charge distributions). Using these surface charge distributions
we define the total charge on each surface $\partial D_m$ denoted by $Q_m$ as
\begin{eqnarray}\label{eladefofQmdbl}
Q_m:=\int_{ \partial D_m} \frac{\partial\,U^{\sigma_{m}} (s)}{\partial \nu_m(s)} ds.
\end{eqnarray}
\end{definition}
\subsubsection{Estimates on the total charge $Q_m,\,m=1,\dots\,M$}\label{elaDLPR-3}
In the following proposition, we provide an approximate of the far-fields in terms of the total charges $Q_m$.
 \begin{proposition}\label{elafarfldthmdbl}
The P-part, $U^\infty_p(\hat{x},\theta)$, and the S-part, $U^\infty_s(\hat{x},\theta)$, of the far-field pattern of the problem (\ref{elaimpoenetrable}-\ref{radiationcela}) have the following asymptotic expansions respectively;
\begin{eqnarray}
U^\infty_p(\hat{x},\theta)&=&\frac{1}{4\pi\,c_p^{2}}(\hat{x}\otimes\hat{x})\sum_{m=1}^{M}\left[e^{-i\frac{\omega}{c_p}\hat{x}\cdot z_{m}}Q_m+O(a^2)\right],\label{x oustdie1 D_mdblelaP}\\
U^\infty_s(\hat{x},\theta)&=& \frac{1}{4\pi\,c_s^{2}}(I- \hat{x}\otimes\hat{x})\sum_{m=1}^{M}\left[e^{-i\frac{\omega}{c_s}\hat{x}\cdot\,z_m}Q_m+O(a^2)\right].\label{x oustdie1 D_mdblelaS}
\end{eqnarray}
if $\kappa_{p^\omega}\,a<1$ and $\kappa_{s^\omega}\,a<1$ where $O(a^2)\,\leq\,\grave{C}_{sp}\omega\,a^2$
with $$\grave{C}_{sp}:=\frac{(|\alpha|+|\beta|)|\partial\c{B}|\grave{\mathrm{C}}\|\varLambda_{B_m}\|_{\mathcal{L}\left(H^1(\partial\, B_m), L^{2}(\partial\, B_m) \right)} }{\max\limits_{1\leq m \leq M} diam(B_m)}\frac{1}{\min\{c_s,c_p\}}.$$
\end{proposition}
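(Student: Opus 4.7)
The plan is to insert the single-layer representation \eqref{qcimprequiredfrm3dblela} into the far-field asymptotics \eqref{elafundatensorasymptotic} of the Kupradze tensor, factor the phase across each obstacle, and control the per-obstacle error by a duality argument on the rescaled reference body.

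\textbf{Main expansion.} Substituting \eqref{qcimprequiredfrm3dblela} into \eqref{elafundatensorasymptotic} and reading off the longitudinal and transversal parts immediately gives
\begin{equation*}
U^\infty_p(\hat x,\theta)=\frac{1}{4\pi c_p^{2}}(\hat x\otimes\hat x)\sum_{m=1}^{M}\int_{\partial D_m}e^{-i\kappa_{p^\omega}\hat x\cdot s}\,\frac{\partial U^{\sigma_m}}{\partial\nu_m}(s)\,ds,
\end{equation*}
and the analogous formula for $U^\infty_s$ with $(I-\hat x\otimes\hat x)$ and $\kappa_{s^\omega}$. Writing $e^{-i\kappa_{p^\omega}\hat x\cdot s}=e^{-i\kappa_{p^\omega}\hat x\cdot z_m}e^{-i\kappa_{p^\omega}\hat x\cdot(s-z_m)}$ and setting $f_m^{p}(s):=e^{-i\kappa_{p^\omega}\hat x\cdot(s-z_m)}-1$ splits each inner integral into the main piece $e^{-i\kappa_{p^\omega}\hat x\cdot z_m}Q_m$, using Definition \ref{elaQmdefdbl}, and the remainder
\begin{equation*}
E_m^{p}:=\int_{\partial D_m}f_m^{p}(s)\,\frac{\partial U^{\sigma_m}}{\partial\nu_m}(s)\,ds.
\end{equation*}
Exactly the same manipulation with $\kappa_{s^\omega}$ gives the companion error $E_m^{s}$. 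It remains to show $|E_m^{p}|,|E_m^{s}|\le\grave C_{sp}\omega a^{2}$.

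\textbf{Key estimate via rescaling.} I rescale to the reference body $B_m$ via $s=\epsilon\eta+z_m$. With $\hat f_m^{p}(\eta):=e^{-i\kappa_{p^\omega}\epsilon\hat x\cdot\eta}-1$ and $\tilde g(\eta):=\partial\mathcal U^{m}/\partial\nu_m(\eta)$, where $\mathcal U^{m}$ is the scaled function defined in \eqref{mathcalUmeqnela}, the chain rule yields $(\partial U^{\sigma_m}/\partial\nu_m)(\epsilon\eta+z_m)=\epsilon^{-1}\tilde g(\eta)$, and $ds=\epsilon^{2}d\eta$, so
\begin{equation*}
E_m^{p}=\epsilon\int_{\partial B_m}\hat f_m^{p}(\eta)\,\tilde g(\eta)\,d\eta,\qquad |E_m^{p}|\le\epsilon\,\|\tilde g\|_{H^{-1}(\partial B_m)}\,\|\hat f_m^{p}\|_{H^{1}(\partial B_m)}.
\end{equation*}
The argument used for Lemma \ref{thmestmdhoudhonuela}, applied to $\mathcal U^{m}$ with Dirichlet datum $\hat\sigma_m$ on $\partial B_m$, bounds the first factor by $C\,\|\Lambda_{B_m}\|\,\|\hat\sigma_m\|_{L^{2}(\partial B_m)}$; since $\|\hat\sigma_m\|_{L^{2}(\partial B_m)}=\epsilon^{-1}\|\sigma_m\|_{L^{2}(\partial D_m)}$ is uniformly bounded by Proposition \ref{normofsigmastmtdblela}, we obtain $\|\tilde g\|_{H^{-1}(\partial B_m)}=O(1)$. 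For the second factor, $\hat f_m^{p}$ vanishes at $\eta=0$ and has $|\nabla\hat f_m^{p}|\le\kappa_{p^\omega}\epsilon$, which combined with the bounded geometry of $B_m$ gives $\|\hat f_m^{p}\|_{H^{1}(\partial B_m)}\le C\kappa_{p^\omega}\epsilon$. Multiplying,
\[
|E_m^{p}|\le C\,\kappa_{p^\omega}\epsilon^{2}=O\!\left(\frac{\omega a^{2}}{c_p}\right),
\]
and the identical argument with $\kappa_{s^\omega}$ gives $|E_m^{s}|=O(\omega a^{2}/c_s)$, so that the uniform constant $\grave C_{sp}$ naturally absorbs $1/\min\{c_s,c_p\}$.

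\textbf{Main obstacle.} The delicate point is the scaling. A naive bound $|E_m^{p}|\le\|f_m^{p}\|_{H^{1}(\partial D_m)}\,\|\partial_{\nu_m}U^{\sigma_m}\|_{H^{-1}(\partial D_m)}$ performed directly on $\partial D_m$ gives only $O(\omega a)$: the tangential gradient of $f_m^{p}$ contributes $\sim\kappa_{p^\omega} a$, and Lemma \ref{thmestmdhoudhonuela} provides only $O(1)$ for the $H^{-1}$ norm of the co-normal density. The decisive gain comes from pairing on $\partial B_m$ after rescaling: the area element contributes $\epsilon^{2}$ while the co-normal derivative only contributes $\epsilon^{-1}$, producing a net extra factor of $\epsilon$ that upgrades the per-scatterer error from $O(\omega a)$ to the advertised $O(\omega a^{2})$.
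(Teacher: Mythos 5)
Your proof is correct and follows the paper's overall skeleton: the same single-layer representation \eqref{qcimprequiredfrm3dblela}, the same substitution into the Kupradze far-field asymptotics \eqref{elafundatensorasymptotic}, and the same factoring of the phase $e^{-i\kappa\hat x\cdot s}=e^{-i\kappa\hat x\cdot z_m}e^{-i\kappa\hat x\cdot(s-z_m)}$ to produce the main term $e^{-i\kappa\hat x\cdot z_m}Q_m$ plus a per-obstacle remainder. Both you and the paper ultimately rely on the same key input, namely the $O(1)$ bound of Lemma \ref{thmestmdhoudhonuela} on the co-normal derivative in $H^{-1}$.

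Where you genuinely diverge is in how the per-obstacle remainder is squeezed to $O(\omega a^2)$. The paper does it on $\partial D_m$ in two steps: it first bounds $\int_{\partial D_m}\bigl|\partial U^{\sigma_m}/\partial\nu_m\bigr|\,ds\le \grave C a$ by pairing the density with the constant $1$ (picking up $\|1\|_{H^1(\partial D_m)}\sim a$), and then bounds $\bigl|e^{-i\kappa\hat x\cdot s}-e^{-i\kappa\hat x\cdot z_m}\bigr|$ pointwise via the Taylor series $\sum_{l\ge 1}\kappa^l(a/2)^l$, which requires $\kappa a<2$ (hence the hypothesis $\kappa_{p^\omega}a,\kappa_{s^\omega}a<1$ in the statement). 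You instead rescale the entire pairing to $\partial B_m$ — correctly tracking the $\epsilon^2$ Jacobian against the $\epsilon^{-1}$ in $\partial U^{\sigma_m}/\partial\nu_m$ to gain a net $\epsilon$ — and then apply $H^1$–$H^{-1}$ duality directly on the fixed reference boundary, where $\|\tilde g\|_{H^{-1}(\partial B_m)}=O(1)$ and $\|\hat f_m^{p}\|_{H^1(\partial B_m)}=O(\kappa\epsilon)$. Your route produces the same $O(\kappa a^2)$ bound without the geometric-series truncation, and your diagnosis of why a naive $H^1(\partial D_m)$–$H^{-1}(\partial D_m)$ pairing loses a power of $a$ is accurate and is the real content of either argument. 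The trade-off is cosmetic: the paper's version keeps everything on $\partial D_m$ and yields the smallness condition on $a$ explicitly, whereas yours pushes the scaling through the reference body and lets $\partial B_m$ carry the uniform geometry.
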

\begin{proof}{\it{of Proposition \ref{elafarfldthmdbl}}.}
From \eqref{qcimprequiredfrm3dblela}, we have
\begin{eqnarray*}
 U^{s}(x)&=&\sum_{m=1}^{M}\int_{\partial D_m}\Gamma^{\omega}(x,s)\frac{\partial\,U^{\sigma_{m}} (s)}{\partial \nu_m(s)}ds,\,\text{ for }x\in\mathbb{R}^{3}\backslash\left(\mathop{\cup}\limits_{m=1}^M \bar{D}_m\right). \nonumber
\end{eqnarray*}
Substitution of the asymptotic behavior of the Kupradze tensor at infinity given in \eqref{elafundatensorasymptotic} in the above scattered field and comparing with \eqref{Lamesystemtotalfieldasymptoticsmall},  will allow us to write the
P-part, $U^\infty_p(\hat{x},\theta)$, and the S-part, $U^\infty_s(\hat{x},\theta)$, of the far-field pattern of the problem (\ref{elaimpoenetrable}-\ref{radiationcela}) respectively as;
\begin{eqnarray}
U^{\infty}_p(\hat{x},\theta)
&=&\frac{1}{4\pi\,c_p^{2}}(\hat{x}\otimes\hat{x})\sum_{m=1}^{M}\int_{S_m}e^{-i\kappa_{p^\omega}\hat{x}\cdot\,s}\frac{\partial\,U^{\sigma_{m}} (s)}{\partial \nu_m(s)}ds \nonumber\\
&=&\frac{1}{4\pi\,c_p^{2}}(\hat{x}\otimes\hat{x})\sum_{m=1}^{M}\left[e^{-i\kappa_{p^\omega}\hat{x}\cdot\,z_m}Q_m+\int_{S_m}[e^{-i\kappa_{p^\omega}\hat{x}\cdot\,s}-e^{-i\kappa_{p^\omega}\hat{x}\cdot\,z_m}]\frac{\partial\,U^{\sigma_{m}} (s)}{\partial \nu_m(s)}ds \right]\label{elaxfarawayimpntdbl-p},\\
U^{\infty}_p(\hat{x},\theta)
&=&\frac{1}{4\pi\,c_s^{2}}(I- \hat{x}\otimes\hat{x})\sum_{m=1}^{M}\int_{S_m}e^{-i\kappa_{s^\omega}\hat{x}\cdot\,s}\frac{\partial\,U^{\sigma_{m}} (s)}{\partial \nu_m(s)}ds\nonumber\\
&=&\frac{1}{4\pi\,c_s^{2}}(I- \hat{x}\otimes\hat{x})\sum_{m=1}^{M}\left[e^{-i\kappa_{s^\omega}\hat{x}\cdot\,z_m}Q_m+\int_{S_m}[e^{-i\kappa_{s^\omega}\hat{x}\cdot\,s}-e^{-i\kappa_{s^\omega}\hat{x}\cdot\,z_m}]\frac{\partial\,U^{\sigma_{m}} (s)}{\partial \nu_m(s)}ds \right]\hspace{-.1cm}.\label{elaxfarawayimpntdbl-s}
\end{eqnarray}
  For every $m=1,2,\dots,M$,  we have from Lemma \ref{thmestmdhoudhonuela};
\begin{eqnarray}\label{elaestimationofintsigmadbl}
\left|\int_{\partial D_m}\left|\frac{\partial\,U^{\sigma_{m}} (s)}{\partial \nu_m(s)}\right| ds\right|
&\leq&||1||_{H^{1}(\partial D_m)}\cdot\left\|\frac{\partial\,U^{\sigma_{m}} }{\partial \nu_m}\right\|_{H^{-1}(\partial D_m)}\nonumber\\
&\leq&\epsilon|\partial \c{B}|^\frac{1}{2}\cdot\left\|\frac{\partial\,U^{\sigma_{m}} }{\partial \nu_m}\right\|_{H^{-1}(\partial D_m)}\nonumber\\
&\substack{\leq\\\eqref{estmdhoudhonuela-1}}&\grave{C}a,
\end{eqnarray}
with $\grave{C}:=\frac{\grave{\mathcal{C}}(\omega)|\partial\c{B}|^{\frac{1}{2}}\|\varLambda_{B_m}\|_{\mathcal{L}\left(H^1(\partial\, B_m), L^{2}(\partial\, B_m) \right)} }{\max\limits_{1\leq m \leq M} diam(B_m)}
~\substack{=\\\eqref{nrmsigmafdblela}}~\frac{(|\alpha|+|\beta|)|\partial\c{B}|\grave{\mathrm{C}}\|\varLambda_{B_m}\|_{\mathcal{L}\left(H^1(\partial\, B_m), L^{2}(\partial\, B_m) \right)} }{\max\limits_{1\leq m \leq M} diam(B_m)}$.
It gives us the following estimate for any $\kappa$, i.e. $\kappa=\kappa_{p^\omega} \mbox{ or } \kappa_{s^\omega}$;
\begin{eqnarray}
\left|\int_{\partial D_m}[e^{-i\kappa\hat{x}\cdot\,s}-e^{-i\kappa\hat{x}\cdot\,z_{m}}]\frac{\partial\,U^{\sigma_{m}} (s)}{\partial \nu_m(s)}ds\right|
                         &\leq&\int_{\partial D_m}\left|e^{-i\kappa\hat{x}\cdot\,s}-e^{-i\kappa\hat{x}\cdot\,z_{m}}\right|\left|\frac{\partial\,U^{\sigma_{m}} (s)}{\partial \nu_m(s)}\right|ds\nonumber\\
                         &\leq&\int_{\partial D_m}\sum_{l=1}^{\infty}\kappa^l|s-z_{m}|^l\left|\frac{\partial\,U^{\sigma_{m}} (s)}{\partial \nu_m(s)}\right|ds\nonumber\\
                         &\leq&\int_{\partial D_m}\sum_{l=1}^{\infty}\kappa^l\left(\frac{a}{2}\right)^l\left|\frac{\partial\,U^{\sigma_{m}} (s)}{\partial \nu_m(s)}\right|ds\nonumber\\
                         &\substack{\leq\\ \eqref{elaestimationofintsigmadbl}}&\grave{C}a\sum_{l=1}^{\infty}\kappa^l\left(\frac{a}{2}\right)^l,\nonumber\\
                         &=&\frac{1}{2}\grave{C}\kappa\,a^2\frac{1}{1-\frac{1}{2}\kappa\,a},\,\text{if}\,a<\frac{2}{\kappa_{\max}}\left(\leq\frac{2}{\kappa}\right).
\end{eqnarray}
which means
\begin{eqnarray}\label{elaestimateforexponentsdifdbl}
\int_{\partial D_m}[e^{-i\kappa\hat{x}\cdot\,s}-e^{-i\kappa\hat{x}\cdot\,z_{m}}]\frac{\partial\,U^{\sigma_{m}} (s)}{\partial \nu_m(s)}ds&\leq&\grave{C}\kappa\,a^2,\,\text{for}\,a\leq\frac{1}{\kappa_{\max}}.
\end{eqnarray}
From \eqref{elaestimateforexponentsdifdbl}, it follows that
\begin{eqnarray}
\int_{S_m}[e^{-i\kappa_{p^\omega}\hat{x}\cdot\,s}-e^{-i\kappa_{p^\omega}\hat{x}\cdot\,z_{m}}]\frac{\partial\,U^{\sigma_{m}} (s)}{\partial \nu_m(s)}ds&<&\grave{C}\kappa_{p^\omega}\,a^2,\,\text{ if }\epsilon\leq\frac{\min\{c_s,c_p\}}{\omega_{\max}\max_{m}diam(B_m)}\label{dblestimateforexponentsdifelap}\\
\int_{S_m}[e^{-i\kappa_{s^\omega}\hat{x}\cdot\,s}-e^{-i\kappa_{s^\omega}\hat{x}\cdot\,z_{m}}]\frac{\partial\,U^{\sigma_{m}} (s)}{\partial \nu_m(s)}ds&<&\grave{C}\kappa_{s^\omega}\,a^2,\,\text{ if }\epsilon\leq\frac{\min\{c_s,c_p\}}{\omega_{\max}\max_{m}diam(B_m)}.\label{dblestimateforexponentsdifelas}
\end{eqnarray}
Now substitution of \eqref{dblestimateforexponentsdifelap} in \eqref{elaxfarawayimpntdbl-p} and \eqref{dblestimateforexponentsdifelas} in \eqref{elaxfarawayimpntdbl-s} gives the required results \eqref{x oustdie1 D_mdblelaP}, \eqref{x oustdie1 D_mdblelaS} respectively.
\end{proof}

\begin{lemma}\label{elaQmestbigodbl}
For $m=1,2,\dots,M$, the absolute value of the total charge $Q_m$ on each surface $\partial D_m$ is bounded by $\epsilon$, i.e.
\begin{equation}\label{elaestofQmdbl}
 |Q_m|\leq\grave{\tilde{c}}\epsilon,
\end{equation}
where $\grave{\tilde{c}}:=(|\alpha|+|\beta|)|\partial \c{B}|\grave{\mathrm{C}}\|\varLambda_{B_m}\|_{\mathcal{L}\left(H^1(\partial\, B_m), L^{2}(\partial\, B_m) \right)}$ 
with $\partial\c{B}$ and $\grave{\mathrm{C}}$ are defined in
\eqref{estinvsmjdblela} and \eqref{nrminvDLplusDK2ela} respectively.
\end{lemma}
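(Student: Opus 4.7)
The plan is to view $Q_m$ as a duality pairing and then apply the $H^{-1}$ estimate of the conormal derivative already proved in Lemma \ref{thmestmdhoudhonuela}. Concretely, since $Q_m=\int_{\partial D_m}\frac{\partial U^{\sigma_m}}{\partial \nu_m}(s)\,ds$, I would recognize this integral as the pairing
\[
Q_m \;=\; \Big\langle 1,\;\frac{\partial U^{\sigma_m}}{\partial \nu_m}\Big\rangle_{H^{1}(\partial D_m),\,H^{-1}(\partial D_m)},
\]
so that $|Q_m|\leq \|1\|_{H^{1}(\partial D_m)}\,\bigl\|\tfrac{\partial U^{\sigma_m}}{\partial \nu_m}\bigr\|_{H^{-1}(\partial D_m)}$.

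Next I would compute the trivial factor: $\|1\|_{H^{1}(\partial D_m)}=|\partial D_m|^{1/2}=\epsilon\,|\partial B_m|^{1/2}\leq \epsilon\,|\partial \c{B}|^{1/2}$, the $\epsilon$-factor coming from the scaling $D_m=\epsilon B_m+z_m$ (equivalently, this is the $L^2$-based piece of (\ref{habib1*})). Then I would invoke the bound (\ref{estmdhoudhonuela-1}) from the proof of Lemma \ref{thmestmdhoudhonuela}, namely
\[
\Big\|\frac{\partial U^{\sigma_m}}{\partial \nu_m}\Big\|_{H^{-1}(\partial D_m)} \;\leq\; \grave{\mathcal{C}}(\omega)\,\|\varLambda_{B_m}\|_{\mathcal{L}(H^{1}(\partial B_m),L^{2}(\partial B_m))},
\]
where $\grave{\mathcal{C}}(\omega)=\grave{\mathrm{C}}\,|\partial\c{B}|^{1/2}(|\alpha|+|\beta|)$ as given right after (\ref{nrmsigmafdblela}).

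Multiplying the two estimates, the $\epsilon$ appears exactly once and the two surface-area factors combine into $|\partial\c{B}|$, producing
\[
|Q_m|\;\leq\;\epsilon\,|\partial\c{B}|^{1/2}\cdot\grave{\mathrm{C}}\,|\partial\c{B}|^{1/2}(|\alpha|+|\beta|)\,\|\varLambda_{B_m}\|_{\mathcal{L}(H^{1}(\partial B_m),L^{2}(\partial B_m))}\;=\;\grave{\tilde c}\,\epsilon,
\]
which is the asserted inequality. There is no real obstacle here: the only subtle point is justifying that $\frac{\partial U^{\sigma_m}}{\partial \nu_m}$ does belong to $H^{-1}(\partial D_m)$ so that the $H^{1}$–$H^{-1}$ pairing is legitimate, but this was already established in Section 2.4.1 using that $\sigma_m\in H^{1}(\partial D_m)$ (so $U^{\sigma_m}\in H^{3/2}(D_m)$ and therefore its conormal derivative lies in $L^{2}(\partial D_m)\subset H^{-1}(\partial D_m)$). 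Everything else is just tracking the $\epsilon$-scaling of the boundary and of the Dirichlet-to-Neumann operator.
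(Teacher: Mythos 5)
Your proposal is correct and follows essentially the same route as the paper's own proof: interpret $Q_m$ as the $H^{1}$--$H^{-1}$ duality pairing of $1$ against the conormal derivative, bound $\|1\|_{H^{1}(\partial D_m)}$ (which equals $\|1\|_{L^{2}(\partial D_m)}=\epsilon|\partial B_m|^{1/2}$ since $1$ has vanishing tangential derivative), and invoke estimate \eqref{estmdhoudhonuela-1} with the constant $\grave{\mathcal{C}}(\omega)=\grave{\mathrm{C}}\,|\partial\c{B}|^{1/2}(|\alpha|+|\beta|)$. The factors of $|\partial\c{B}|^{1/2}$ combine to $|\partial\c{B}|$ exactly as in the paper, yielding $\grave{\tilde{c}}\epsilon$.
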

\begin{proof}{\it{of Lemma \ref{elaQmestbigodbl}}.}
The proof follows as below;
\begin{eqnarray*}\label{Q2dbl}
 |Q_m|&=&\left|\int_{ \partial D_m} \frac{\partial\,U^{\sigma_{m}} (s)}{\partial \nu_m(s)} ds\right|\nonumber\\
    &\leq&|| 1 ||_{H^{1}(\partial D_m)}\left\| \frac{\partial\,U^{\sigma_{m}} (s)}{\partial \nu_m(s)} \right\|_{H^{-1}(\partial D_m)}\nonumber\\
    &\substack{\leq\\\eqref{estmdhoudhonuela-1}}&|| 1 ||_{L^2(\partial D_m)}\grave{\mathcal{C}}(\omega)\,
\|\varLambda_{B_m}\|_{\mathcal{L}\left(H^1(\partial\, B_m), L^{2}(\partial\, B_m) \right)}\nonumber\\
    &\,\substack{\leq\\\eqref{nrmsigmafdblela}}\,&\epsilon\, |\partial \c{B}|\,(|\alpha|+|\beta|)\,\grave{\mathrm{C}}
\,\|\varLambda_{B_m}\|_{\mathcal{L}\left(H^1(\partial\, B_m), L^{2}(\partial\, B_m) \right)}.
\end{eqnarray*}
\end{proof}
\noindent
For $s_m\in \partial D_m$, using the Dirichlet boundary condition \eqref{elagoverningsupport} , we have
\begin{eqnarray}\label{elaQ_mintdbl}
 0&=&U^{t}(s_m)=U^{i}(s_m)+\sum_{j=1}^{M}\int_{\partial D_j}\Gamma^\omega(s_m,s)\frac{\partial\,U^{\sigma_{j}} (s)}{\partial \nu_j(s)}ds\nonumber \\
&=&U^{i}(s_m)+\sum_{\substack{j=1 \\ j\neq m}}^{M}\int_{\partial D_j}\Gamma^\omega(s_m,s)\frac{\partial\,U^{\sigma_{j}} (s)}{\partial \nu_j(s)}ds+\int_{\partial D_m}\Gamma^\omega(s_m,s)\frac{\partial\,U^{\sigma_{m}} (s)}{\partial \nu_m(s)} ds\nonumber \\
&=&U^{i}(s_m)+\sum_{\substack{j=1 \\ j\neq m}}^{M}\Gamma^\omega(s_m,z_j)Q_j\nonumber\\
&&\qquad+\sum_{\substack{j=1 \\ j\neq m}}^{M}\left(\int_{\partial D_j}[\Gamma^\omega(s_m,s)-\Gamma^\omega(s_m,z_j)]\frac{\partial\,U^{\sigma_{j}} (s)}{\partial \nu_j(s)}ds\right)+\int_{\partial D_m}\Gamma^\omega(s_m,s)\frac{\partial\,U^{\sigma_{m}} (s)}{\partial \nu_m(s)}ds.
\end{eqnarray}
To estimate $\int_{\partial D_j}[\Gamma^\omega(s_m,s)-\Gamma^\omega(s_m,z_j)]\frac{\partial\,U^{\sigma_{j}} (s)}{\partial \nu_j(s)} (s)ds$ for $j\neq\,m$,
we have from Taylor series that,
\begin{eqnarray}\label{elataylorphifardbl}
\Gamma^\omega(s_m,s)-\Gamma^\omega(s_m,z_j)=(s-z_j)\cdot R(s_m,s),\,R(s_m,s)=\int_{0}^{1}\nabla_2\Gamma^\omega(s_m,s-\alpha(s-z_j))d\alpha.
\end{eqnarray}
\begin{itemize}
\item 
From the definition of $\Gamma^\omega(x,y)$ and by using the calculations made in \eqref{modD_{ij}ela}, for $s\in \bar{D}_j$ , we obtain
\begin{eqnarray}\label{elaestimatofRxsdbl}
 |R(s_m,s)|\leq\max\limits_{y\in \bar{D}_j}\left|\nabla_y\Gamma^\omega(s_m,y)\right|<\,\frac{1}{4\pi}\left[\frac{C_9}{d^2_{mj}}+C_{10}\right]
\end{eqnarray}
 with
 \begin{center} $C_9:=3\left(\frac{1}{c_s^2}+\frac{1}{c_p^2}\right)$
\\
and  $C_{10}:=2\frac{\omega^2}{c_s^4}\left(\frac{1}{8}+\frac{1-\left(\frac{1}{2}\kappa_{s^\omega}diam(\Omega)\right)^{N_\Omega}}{1-\frac{1}{2}\kappa_{s^\omega}diam(\Omega)}+\frac{1}{2^{N_{\Omega}-1}}\right)
+\frac{\omega^2}{c_p^4}\left(\frac{1}{4}+\frac{1-\left(\frac{1}{2}\kappa_{p^\omega}diam(\Omega)\right)^{N_\Omega}}{1-\frac{1}{2}\kappa_{p^\omega}diam(\Omega)}+\frac{1}{2^{N_{\Omega}-1}}\right).$
\end{center}
\begin{indeed}
 for $x\in \bar{D}_m$ and $s\in \bar{D}_j$, we have from \eqref{gradkupradzeten1};
 \begin{eqnarray}\label{gradkupradzeten1dfnt}
\left|\nabla_x\Gamma^\omega(x,s)\right|
&\leq&\frac{1}{4\pi}\frac{1}{\omega^2}
 \left[3\left(\kappa_{s^\omega}^{2}+\kappa_{p^\omega}^{2}\right)|x-s|^{-2}+\frac{1}{8}\left(6\kappa_{s^\omega}^{4}+4\kappa_{p^\omega}^{4}\right)\right]  \nonumber\\
&&+\frac{1}{4\pi}\sum_{l=3}^{\infty}\frac{1}{(l-2)!(l+2)}\frac{1}{\omega^2}
 \left(2\kappa_{s^\omega}^{l+2}+\kappa_{p^\omega}^{l+2}\right)|x-s|^{l-2}  \nonumber\\
&\leq&\frac{1}{4\pi}\frac{1}{\omega^2}
 \left[\frac{3}{d^2_{mj}}\left(\kappa_{s^\omega}^{2}+\kappa_{p^\omega}^{2}\right)+\frac{1}{4}\left(\kappa_{s^\omega}^{4}+\kappa_{p^\omega}^{4}\right)\right.\nonumber\\
&&\hspace{2cm}\left.+\sum_{l=2}^{\infty}\frac{1}{(l-2)!(l+2)}
 \left(2\kappa_{s^\omega}^{l+2}+\kappa_{p^\omega}^{l+2}\right)diam(\Omega)^{l-2}\right]\nonumber\\
&\leq&  \frac{1}{4\pi}\left[\frac{3}{d^2_{mj}}\left(\frac{1}{c_s^2}+\frac{1}{c_p^2}\right)
 +\frac{1}{4}\left(\frac{\omega^{2}}{c_s^4}+\frac{\omega^{2}}{c_p^4}\right)\right.\nonumber\\
&&\hspace{2cm}\left.+\sum_{l=2}^{\infty}\frac{1}{(l-2)!(l+2)}\left(2\frac{\omega^2}{c_s^4}\kappa_{s^\omega}^{l-2}+\frac{\omega^2}{c_p^4}\kappa_{p^\omega}^{l-2}\right)diam(\Omega)^{l-2}\right] \nonumber\\
&&\mbox{[By recalling $N_{\Omega}=[2diam(\Omega)\max\{\kappa_{s^\omega},\kappa_{p^\omega}\}e^2]$ and using Lemma \ref{stirlingapproxlemma}]}\nonumber\\
&\leq&  \frac{1}{4\pi}\left[\frac{3}{d^2_{mj}}\left(\frac{1}{c_s^2}+\frac{1}{c_p^2}\right)
  +\frac{1}{4}\left(\frac{\omega^{2}}{c_s^4}+\frac{\omega^{2}}{c_p^4}\right)\right.\nonumber\\
&&\hspace{2cm}\left.+2\frac{\omega^2}{c_s^4}\left(\sum_{l=0}^{N_{\Omega}-1}\left(\frac{1}{2}\kappa_{s^\omega}diam(\Omega)\right)^{l}+\sum_{l=N_{\Omega}}^{\infty}\left(\frac{1}{2}\right)^{l}\right)\right.\nonumber\\
&&\hspace{2cm}\left.+\frac{\omega^2}{c_p^4}\left(\sum_{l=0}^{N_{\Omega}-1}\left(\frac{1}{2}\kappa_{p^\omega}diam(\Omega)\right)^{l}+\sum_{l=N_{\Omega}}^{\infty}\left(\frac{1}{2}\right)^{l}\right)\right] \nonumber\\
&=&  \frac{1}{4\pi}\left[\frac{C_9}{d^2_{mj}}+C_{10}\right].
\end{eqnarray}
\end{indeed}

\end{itemize}
For $m,j=1,\dots,M$, and $j\neq\,m$, by making use of \eqref{elaestimatofRxsdbl} and \eqref{elaestimationofintsigmadbl} we obtain the below;
\begin{eqnarray}\label{elaestphismzjdifdbl}
 \left|\int_{\partial D_j}[\Gamma^\omega(s_m,s)-\Gamma^\omega(s_m,z_j)]\frac{\partial\,U^{\sigma_{j}} (s)}{\partial \nu_j(s)}ds\right|&=&\left|\int_{\partial D_j}(s-z_j)\cdot R(s_m,s)\frac{\partial\,U^{\sigma_{j}} (s)}{\partial \nu_j(s)}ds\right|\nonumber\\
                                                               &\leq&\int_{\partial D_j}\left|s-z_j\right| \left|R(s_m,s)\right| \left|\frac{\partial\,U^{\sigma_{j}} (s)}{\partial \nu_j(s)}\right|ds\nonumber\\
                                                               &<\,&\,\frac{a}{4\pi}\left[\frac{C_9}{d^2}+C_{10}\right]\int_{\partial D_j}\left|\frac{\partial\,U^{\sigma_{j}} (s)}{\partial \nu_j(s)}\right|ds\nonumber\\
                                                               &<\,&\grave{C}\frac{a}{4\pi}\left[\frac{C_9}{d^2}+C_{10}\right]a.
\end{eqnarray}
\noindent
Then \eqref{elaQ_mintdbl} can be written as
\begin{equation}\label{{elaqcimsurfacefrmdbl}}
\begin{split}
 \int_{\partial D_m}\Gamma^{0}(s_m,s)\frac{\partial\,U^{\sigma_{m}} (s)}{\partial \nu_m(s)}ds+&\int_{\partial D_m}[\Gamma^{\omega}(s_m,s)-\Gamma^{0}(s_m,s)]\frac{\partial\,U^{\sigma_{m}} (s)}{\partial \nu_m(s)}ds\\
&=-U^{i}(s_m)-\sum_{\substack{j=1 \\ j\neq m}}^{M}\Gamma^{\omega}(s_m,z_j)Q_j+O\left((M-1)\frac{a^2}{d^2}\right).
\end{split}
\end{equation}
By using the Taylor series expansions of the exponential term $e^{i\kappa|s_m-s|}$, the above can also be written as,
\begin{eqnarray}\label{elaqcimsurfacefrm1dbl}
\int_{\partial D_m}\Gamma^0(s_m,s)\frac{\partial\,U^{\sigma_{m}} (s)}{\partial \nu_m(s)}ds+O( a)&=&-U^{i}(s_m)-\sum_{\substack{j=1 \\ j\neq m}}^{M}\Gamma^\omega(s_m,z_j)Q_j+O\left((M-1)\frac{a^2}{d^2}\right).
\end{eqnarray}
\begin{indeed}
\begin{itemize}
\item $\omega\leq\omega_{\max}$ and for $m=1,\dots,M$, we have
\begin{align*}
\left|\int_{\partial D_m}\right.&\left.[\Gamma^{\omega}(s_m,s)-\Gamma^{0}(s_m,s)]\,\frac{\partial\,U^{\sigma_{m}} (s)}{\partial \nu_m(s)}ds\right|\nonumber\\
                             &\leq\int_{\partial D_m}\vert\Gamma^{\omega}(s_m,s)-\Gamma^{0}(s_m,s)\vert\left|\frac{\partial\,U^{\sigma_{m}} (s)}{\partial \nu_m(s)}\right|ds\nonumber\\
&\leq\int_{\partial D_m}\frac{\omega}{4\pi}\left[\frac{2}{c_s^3}\sum\limits_{l=0}^\infty\left(\frac{1}{2}\right)^{l}\kappa_{s^\omega}^l\,diam(D_m)^l\right.
\left.+\frac{1}{c_p^3}\sum\limits_{l=0}^\infty\left(\frac{1}{2}\right)^{l}\kappa_{p^\omega}^l\,diam(D_m)^l\right]\,\left|\frac{\partial\,U^{\sigma_{m}} (s)}{\partial \nu_m(s)}\right|ds\nonumber\\
                             &\substack{\leq \\ \,\eqref{elaestimationofintsigmadbl}}\frac{\omega}{4\pi}\left[\frac{2}{c_s^3}\sum\limits_{l=0}^\infty\left(\frac{1}{2}\right)^{l}\kappa_{s^\omega}^l\,a^l+\frac{1}{c_p^3}\sum\limits_{l=0}^\infty\left(\frac{1}{2}\right)^{l}\kappa_{p^\omega}^l\,a^l\right]\cdot \grave{C}a\nonumber\\
                             &<\frac{\grave{C}}{\pi}\left[\frac{2}{c_s^3}+\frac{1}{c_p^3}\right] \omega\,a,\,\mbox{for } \epsilon\leq\frac{\min\{c_s,c_p\}}{\omega_{\max}\max_{m}diam(B_m)}.
\end{align*}
\end{itemize}
\end{indeed}
Define $U_m:=\int_{\partial D_m}\Gamma^{0}(s_m,s)\frac{\partial\,U^{\sigma_{m}} (s)}{\partial \nu_m(s)}ds,\,s_m\in\,\partial D_m$. Then \eqref{elaqcimsurfacefrm1dbl} can be written as
\begin{eqnarray}\label{elaqcimsurfacefrm2dbl}
U_m&=&-U^{i}(s_m)-\sum_{\substack{j=1 \\ j\neq m}}^{M}\Gamma^\omega(s_m,z_j)Q_j+O( a)+O\left((M-1)\frac{a^2}{d^2}\right).
\end{eqnarray}
We set
\begin{eqnarray}\label{def-ela-1-dbl-revise-defntn-barum}
\bar{U}_m:=-U^{i}(z_m)-\sum\limits_{\substack{j=1 \\ j\neq m}}^{M}{\Gamma}^{\omega}(z_m,z_j)Q_j, \,\mbox{for}\,m=1,\dots,M.
\end{eqnarray}
For $m=1,\dots,M$, let $\bar{\sigma}_m\in L^2(\partial D_m)$ be the solutions of following the integral equation;
\begin{eqnarray}\label{nbc1dbl-revise-ela}
\frac{\sigma_m(s)}{2}+\int_{\partial D_m}\frac{\partial{\Gamma}^{0}(x,s)}{\partial \nu_m(s)}\sigma_{m} (s)ds=\bar{U}_m \quad\mbox{on}\, \partial D_m.
\end{eqnarray}
Remark here that the left hand side of \eqref{nbc1dbl-revise-ela} is the trace, on $\partial{D_m}$, of the double layer potential $\int_{\partial D_m}\frac{\partial{\Gamma}^{0}(x,s)}{\partial \nu_m(s)}\sigma_{m} (s)ds$,
$x\in\mathbb{R}^3\backslash\bar{D}_m$. Dealing in the similar way as we derived \eqref{revised-greensform-DtN-ela-small}, we obtain
\begin{eqnarray}\label{nbc1dbl-1-revise-ela}
 \int_{\partial D_m}\frac{\partial{\Gamma}^{0}(x,s)}{\partial \nu_m(s)}\sigma_{m} (s)ds=\int_{\partial D_m}{\Gamma}^{0}(x,s)\frac{\partial\,U^{\bar{\sigma}_{m}} (s)}{\partial \nu_m(s)}ds,
\end{eqnarray}
with $U^{\bar{\sigma}_{m}}$ are the solutions of \eqref{elaimpenetrableUsigma} replacing the frequency $\omega$ by zero. As single layer potential is continuous up to the boundary,
combining \eqref{nbc1dbl-revise-ela} and \eqref{nbc1dbl-1-revise-ela}, we deduce that
the constant potentials $\bar{U}_m,\,m=1,\dots,M$ satisfy,
\begin{equation}\label{elabarqcimsurfacefrm1dbl}
\int_{\partial D_m}{\Gamma}^{0}(s_m,s)\frac{\partial\,U^{\bar{\sigma}_{m}} (s)}{\partial \nu_m(s)} ds=\bar{U}_m,\,s_m\in\,\partial D_m.
\end{equation}
The total charge on the surface $\partial D_m$ is given by
$$\bar{Q}_m:=\int_{\partial D_m}\frac{\partial\,U^{\bar{\sigma}_{m}} (s)}{\partial \nu_m(s)}ds.$$
For $m=1,\dots,M$, and $l=1,2,3$, [by proceeding in the similar manner as of \eqref{elaqcimsurfacefrm2dbl}-\eqref{elabarqcimsurfacefrm1dbl}], let $\bar{\sigma}_m^{l}\in L^2(\partial D_m)$ be the surface charge distributions which define,
\begin{itemize}
 \item
The constant potentials $\bar{U}_m^{l}\in\mathbb{C}^{3\times1}$ as
\begin{equation}\label{barqcimsurfacefrm1elaldbl}
\int_{\partial D_m}\Gamma^{0}(s_m,s)\frac{\partial\,U^{\bar{\sigma}_{m}^{l}} (s)}{\partial \nu_m(s)}ds=\bar{U}^{l}_m:=-\left(U^{i}(z_m)\right)(l)e_l-\sum\limits_{\substack{j=1 \\ j\neq m}}^{M}\Gamma^{\omega}(z_m,z_j)Q_j(l)e_l, s_m\in\,\partial D_m
\end{equation}
with $e_1=(1 ,\, 0,\, 0)^\top, e_2=(0 ,\, 1,\, 0)^\top$ and $e_3=(0 ,\, 0,\, 1)^\top$.
\item The charge $\bar{Q}_m^{l}\in\mathbb{C}^{3\times1}$ on surface $S_m$ as
$$\bar{Q}_m^{l}:=\int_{\partial D_m}\frac{\partial\,U^{\bar{\sigma}_{m}^{l}} (s)}{\partial \nu_m(s)}ds,$$
\end{itemize}
from which we can notice that  $\bar{U}_m=\sum\limits_{l=1}^{3}\bar{U}^{l}_m$, $\bar{\sigma}_{m}=\sum\limits_{l=1}^{3}\bar{\sigma}_{m}^{l}$ and $\bar{Q}_m=\sum\limits_{l=1}^{3}\bar{Q}^{l}_m$.
\newline
Now, we set the electrical capacitance $\bar{C}_m\in\mathbb{C}^{3\times3}$ for $1\leq m\leq M$ through
\begin{eqnarray}\label{capac-def-ela-dbl}
 \bar{Q}_m^{l}=\bar{C}_m\,\bar{U}_m^{l},\, l=1,2,3 &\text{ and  hence }& \bar{Q}_m=\bar{C}_m\,\bar{U}_m.
\end{eqnarray}
We can write the above also as $\left[\bar{Q}_m^{1}, \bar{Q}_m^{2}, \bar{Q}_m^{3}\right]=\bar{C}_m\left[\bar{U}_m^{1}, \bar{U}_m^{2}, \bar{U}_m^{3}\right]$ for each $m=1,2,\dots,M$.
\newline
\begin{lemma}\label{lemmadifssbQQbCCbeladbl}
 We have the following estimates for $1\leq\,m\leq\,M$;
 \begin{eqnarray}
  \left\|\frac{\partial\,U^{\sigma_{m}} }{\partial \nu_m}-\frac{\partial\,U^{\bar{\sigma}_{m}} }{\partial \nu_m}\right\|_{H^{-1}(\partial D_m)}&=&O\left( a+(M-1)\frac{a^2}{d^2}\right),\label{eladifssbdbl}\\
  Q_m-\bar{Q}_m&=&O\left( a^2+(M-1)\frac{a^3}{d^2}\right)\label{eladifQmddbl}.
 \end{eqnarray}
where the constants appearing in $O(\cdot)$ depend only on the Lipschitz character of $B_m$.
\end{lemma}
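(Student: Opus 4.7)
The plan is to subtract the two integral equations characterizing $\partial U^{\sigma_m}/\partial \nu_m$ and $\partial U^{\bar\sigma_m}/\partial \nu_m$, control the right-hand side of the resulting equation, and invert the zero-frequency single-layer operator with the correct $\epsilon$-scaling.

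First I would set up the driving equation. Equation \eqref{elaqcimsurfacefrm2dbl} gives
\[
\int_{\partial D_m}\Gamma^{0}(s_m,s)\tfrac{\partial U^{\sigma_{m}}}{\partial \nu_m}(s)\,ds
=-U^{i}(s_m)-\sum_{j\neq m}\Gamma^{\omega}(s_m,z_j)Q_j+O\!\left(a+(M-1)\tfrac{a^{2}}{d^{2}}\right),
\]
while \eqref{elabarqcimsurfacefrm1dbl} reads
\[
\int_{\partial D_m}\Gamma^{0}(s_m,s)\tfrac{\partial U^{\bar\sigma_{m}}}{\partial \nu_m}(s)\,ds
=-U^{i}(z_m)-\sum_{j\neq m}\Gamma^{\omega}(z_m,z_j)Q_j.
\]
Subtracting, I obtain an integral equation of the first kind for $\tfrac{\partial U^{\sigma_{m}}}{\partial \nu_m}-\tfrac{\partial U^{\bar\sigma_{m}}}{\partial \nu_m}$ with right-hand side
\[
R_m(s_m):=\bigl[U^{i}(z_m)-U^{i}(s_m)\bigr]+\sum_{j\neq m}\bigl[\Gamma^{\omega}(z_m,z_j)-\Gamma^{\omega}(s_m,z_j)\bigr]Q_j+O\!\left(a+(M-1)\tfrac{a^{2}}{d^{2}}\right).
\]

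Next I would estimate $R_m$ pointwise on $\partial D_m$. A first-order Taylor expansion of the smooth plane wave $U^i$ gives $|U^{i}(z_m)-U^{i}(s_m)|\le \omega(|\alpha|+|\beta|)a/\min\{c_p,c_s\}=O(a)$. For the interaction term, a Taylor expansion of $\Gamma^{\omega}(\cdot,z_j)$ between $z_m$ and $s_m$, combined with the gradient estimate \eqref{gradkupradzeten1dfnt}, yields $|\Gamma^{\omega}(z_m,z_j)-\Gamma^{\omega}(s_m,z_j)|\le C a/d_{mj}^{2}$ up to a bounded tail; coupled with $|Q_j|\le \grave{\tilde c}\epsilon=O(a)$ from Lemma~\ref{elaQmestbigodbl} and summed over $j\neq m$, this contributes $O((M-1)a^{2}/d^{2})$. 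Combining with the intrinsic error terms from \eqref{elaqcimsurfacefrm2dbl}, I conclude $\|R_m\|_{L^{\infty}(\partial D_m)}=O(a+(M-1)a^{2}/d^{2})$ and, using $|\partial D_m|^{1/2}=O(a)$,
\[
\|R_m\|_{L^{2}(\partial D_m)}=O\!\left(a^{2}+(M-1)\tfrac{a^{3}}{d^{2}}\right).
\]

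Then I would invert the static single-layer operator $S_m^{0}(\phi)(s_m):=\int_{\partial D_m}\Gamma^{0}(s_m,s)\phi(s)\,ds$. The key structural fact is the scaling identity $(S^{0}_{D_\epsilon}\phi)^{\wedge}(\xi)=\epsilon\,(S^{0}_{B}\hat\phi)(\xi)$ (inherited from the $-1$-homogeneity of $\Gamma^0$ in 3D), which combined with the norm scalings of Lemma~\ref{L2H1estimates} gives $\|(S_m^{0})^{-1}\|_{\mathcal L(L^{2}(\partial D_m),\,H^{-1}(\partial D_m))}\le C\epsilon^{-1}$, where $C$ depends only on the Lipschitz character of $B_m$ (via the invertibility of $S^{0}_{B_m}$, see the appendix). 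Applying this inverse to the subtracted equation yields
\[
\Big\|\tfrac{\partial U^{\sigma_{m}}}{\partial \nu_m}-\tfrac{\partial U^{\bar\sigma_{m}}}{\partial \nu_m}\Big\|_{H^{-1}(\partial D_m)}\le C\epsilon^{-1}\|R_m\|_{L^{2}(\partial D_m)}=O\!\left(a+(M-1)\tfrac{a^{2}}{d^{2}}\right),
\]
which is \eqref{eladifssbdbl}. Finally, \eqref{eladifQmddbl} is obtained by duality pairing against the constant $1$:
\[
|Q_m-\bar Q_m|\le \|1\|_{H^{1}(\partial D_m)}\Big\|\tfrac{\partial U^{\sigma_{m}}}{\partial \nu_m}-\tfrac{\partial U^{\bar\sigma_{m}}}{\partial \nu_m}\Big\|_{H^{-1}(\partial D_m)}=O(a)\cdot O\!\left(a+(M-1)\tfrac{a^{2}}{d^{2}}\right),
\]
giving the stated $O(a^{2}+(M-1)a^{3}/d^{2})$ bound.

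The main obstacle I expect is justifying the single-layer inversion with the sharp $\epsilon^{-1}$ factor in the operator norm from $L^{2}(\partial D_m)$ to $H^{-1}(\partial D_m)$; one must verify that $S^{0}_{B_m}$ is invertible between the reference spaces (relying on Lipschitz-regularity results for the elastic static single layer) and then propagate scaling through the $\wedge/\vee$ identifications, tracking the $\epsilon$-dependence in both the direct and dual norms, exactly as was done for the double-layer operator in Lemma~\ref{rep1dbllayerela}. Once this is in place, the rest is a Taylor-expansion bookkeeping together with Lemma~\ref{elaQmestbigodbl}.
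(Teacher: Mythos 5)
Your proposal is correct and follows essentially the same route as the paper's proof: subtract \eqref{elaqcimsurfacefrm2dbl} from \eqref{elabarqcimsurfacefrm1dbl}, bound the resulting right-hand side by $O(a+(M-1)a^2/d^2)$ pointwise (hence $O(a^2+(M-1)a^3/d^2)$ in $L^2(\partial D_m)$), invert the static single-layer operator with the $O(\epsilon^{-1})$ norm bound from Lemma~\ref{rep1singulayerela}, and pair against the constant $1$ for the second estimate. The only minor technical difference is that the paper obtains the $L^2(\partial D_m)\to H^{-1}(\partial D_m)$ bound for the inverse by passing to the adjoint of $\mathcal{S}^{i_\omega}_{D_m}:L^2\to H^1$ (using $\|((\mathcal{S}^{i_\omega}_{D_m})^*)^{-1}\|_{\mathcal{L}(L^2,H^{-1})}=\|(\mathcal{S}^{i_\omega}_{D_m})^{-1}\|_{\mathcal{L}(H^1,L^2)}$), which sidesteps any explicit $H^{-1}$ rescaling, whereas you propose tracking the dual norm directly; both give the same $O(a^{-1})$ factor.
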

\begin{proof}{\it{of Lemma \ref{lemmadifssbQQbCCbeladbl}}.}
By taking the difference between \eqref{elaqcimsurfacefrm2dbl} and \eqref{elabarqcimsurfacefrm1dbl}, we obtain
\begin{eqnarray}\label{dbarqcimsurfacefrm1eladbl}
U_m-\bar{U}_m&=&\int_{\partial D_m}\Gamma^{0}(s_m,s)\left(\frac{\partial\,U^{\sigma_{m}} }{\partial \nu_m}-\frac{\partial\,U^{\bar{\sigma}_{m}} }{\partial \nu_m}\right) (s)ds\nonumber\\
&=&O( a)+O\left((M-1)\frac{a^2}{d^2}\right),\quad s_m\in \partial D_m.
\end{eqnarray}
\begin{indeed} by using Taylor series,
\begin{itemize}
 \item $U^{i}(s_m)-U^{i}(z_m)=O( a)$.
  \item $\Gamma^{\omega}(s_m,z_j)-\Gamma^{\omega}(z_m,z_j)=O\left(\frac{a}{d^2}\right)$ and the asymptoticity of ${Q}_j$. 
\end{itemize}
\end{indeed}
In operator form we can write \eqref{dbarqcimsurfacefrm1eladbl} as,
\begin{eqnarray*}
(\mathcal{S}^{i_\omega}_{D_m})^{*}\left(\frac{\partial\,U^{\sigma_{m}} }{\partial \nu_m}-\frac{\partial\,U^{\bar{\sigma}_{m}} }{\partial \nu_m}\right) (s_m)
&:=&\int_{\partial D_m}\Gamma^{0}(s_m,s)\left(\frac{\partial\,U^{\sigma_{m}} }{\partial \nu_m}-\frac{\partial\,U^{\bar{\sigma}_{m}} }{\partial \nu_m}\right) (s)ds\nonumber\\
&=&O( a)+O\left((M-1)\frac{a^2}{d^2}\right),\quad s_m\in \partial D_m.\nonumber
\end{eqnarray*}
Here, $(\mathcal{S}^{i_\omega}_{D_m})^{*}:H^{-1}(\partial D_m)\rightarrow L^2(\partial D_m)$ is the adjoint of $\mathcal{S}^{i_\omega}_{D_m}:L^{2}(\partial D_m)\rightarrow H^{1}(\partial D_m)$.  We know that,
\begin{eqnarray*}
\left\|(\mathcal{S}^{i_\omega}_{D_m})^{*}\right\|_{\mathcal{L}\left(H^{-1}(\partial D_m),L^2(\partial D_m)\right)}=
 \left\|\mathcal{S}^{i_\omega}_{D_m}\right\|_{\mathcal{L}\left(L^2(\partial D_m),H^1(\partial D_m)\right)}
\end{eqnarray*}
and
\begin{eqnarray*}
\left\|{((\mathcal{S}^{i_\omega}_{D_m})^{*})}^{-1}\right\|_{\mathcal{L}\left(L^2(\partial D_m),H^{-1}(\partial D_m)\right)}=
 \left\|{(\mathcal{S}^{i_\omega}_{D_m})}^{-1}\right\|_{\mathcal{L}\left(H^{1}(\partial D_m),L^2(\partial D_m)\right)},\nonumber
\end{eqnarray*}
then from \eqref{nrm1singulayer2ela} of Lemma \ref{rep1singulayerela}, we obtain 
$\left\|{((\mathcal{S}^{i_\omega}_{D_m})^{*})}^{-1}\right\|_{\mathcal{L}\left(L^2(\partial D_m),H^{-1}(\partial D_m)\right)}=O(a^{-1})$. 
Hence, we get the required results in the following manner.
\begin{itemize}
\item First,
\begin{align*}
 \left\|\frac{\partial\,U^{\sigma_{m}} }{\partial \nu_m}-\frac{\partial\,U^{\bar{\sigma}_{m}} }{\partial \nu_m}\right\|_{H^{-1}(\partial D_m)}
 &\leq\left\|{((\mathcal{S}^{i_\omega}_{D_m})^{*})}^{-1}\right\|_{\mathcal{L}\left(L^2(\partial D_m),H^{-1}(\partial D_m)\right)}
                                                    \left\| O( a)+O\left((M-1)\frac{a^2}{d^2}\right)\right\|_{L^2(\partial D_m)}\nonumber\\
 &=O\left( a+(M-1)\frac{a^2}{d^2}\right).\nonumber
\end{align*}
\item Second, \begin{eqnarray*}
 |Q_m-\bar{Q}_m|&=&\left|\int_{\partial D_m}\left(\frac{\partial\,U^{\sigma_{m}} }{\partial \nu_m}-\frac{\partial\,U^{\bar{\sigma}_{m}} }{\partial \nu_m}\right) (s)ds\right|\nonumber\\
                &\leq& \left\|\frac{\partial\,U^{\sigma_{m}} }{\partial \nu_m}-\frac{\partial\,U^{\bar{\sigma}_{m}} }{\partial \nu_m}\right\|_{H^{-1}(\partial D_m)} \|1\|_{H^1(\partial D_m)}\nonumber\\
                &=& O\left( a^2+(M-1)\frac{a^3}{d^2}\right).
\end{eqnarray*}
\end{itemize}
\end{proof}
\begin{lemma}\label{lemmadifssbQQbCCb1eladbl}
For every $1\leq m\leq M$, the capacitance $\bar{C}_m$  and charge $\bar{Q}_m$ are of the form;
\begin{eqnarray}\label{asymptotCapeladbl}
\bar{C}_m\,=\,\frac{\bar{C}_{B_m}}{\max\limits_{1\leq m \leq M} diam(B_m)}a
 & \mbox{ and }&
\bar{Q}_m\,=\,\frac{\bar{Q}_{B_m}}{\max\limits_{1\leq m \leq M} diam(B_m)}a,
 \end{eqnarray}
where $\bar{C}_{B_m}$ and $\bar{Q}_{B_m}$ are the capacitance and the charge of $B_m$ respectively.
\end{lemma}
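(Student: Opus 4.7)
My plan is to exploit translation invariance and the $-1$ homogeneity of the zero-frequency (Kelvin) tensor $\Gamma^{0}$ together with the natural scaling $D_m=\epsilon B_m+z_m$ in order to rescale the defining integral equation \eqref{barqcimsurfacefrm1elaldbl} on $\partial D_m$ to the analogous equation on $\partial B_m$, and then simply track the powers of $\epsilon$.

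First I would introduce the pulled-back density $\bar{\mathcal{U}}^{l,m}(\xi):=U^{\bar{\sigma}_{m}^{l}}(\epsilon\xi+z_m)$ for $\xi\in B_m$. Exactly as in the proof of Lemma~\ref{thmestmdhoudhonuela}, the co-normal derivative transforms by $\frac{\partial U^{\bar{\sigma}_{m}^{l}}}{\partial\nu_m}(\epsilon\xi+z_m)=\epsilon^{-1}\frac{\partial \bar{\mathcal{U}}^{l,m}}{\partial\nu_m}(\xi)$. Combining this with $\Gamma^{0}(\epsilon\xi_m+z_m,\epsilon\xi+z_m)=\epsilon^{-1}\Gamma^{0}(\xi_m,\xi)$ (translation invariance plus the $-1$ homogeneity of the Kelvin matrix, which is the $\omega\to 0$ limit of \eqref{kupradzeten1} after cancellation of the $\omega^{-2}$ factors) and $ds=\epsilon^{2}d\xi$, the substitution $s=\epsilon\xi+z_m$, $s_m=\epsilon\xi_m+z_m$ transforms \eqref{barqcimsurfacefrm1elaldbl} into
\begin{equation*}
\int_{\partial B_m}\Gamma^{0}(\xi_m,\xi)\,\frac{\partial \bar{\mathcal{U}}^{l,m}}{\partial\nu_m}(\xi)\,d\xi=\bar{U}_m^{l},\qquad \xi_m\in\partial B_m,
\end{equation*}
that is, the same first-kind integral equation on the reference obstacle $\partial B_m$, with the unchanged constant right-hand side $\bar{U}_m^{l}$.

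Setting $\bar{Q}_{B_m}^{l}:=\int_{\partial B_m}\frac{\partial \bar{\mathcal{U}}^{l,m}}{\partial\nu_m}(\xi)\,d\xi$ and defining the capacitance $\bar{C}_{B_m}$ of $B_m$ through $\bar{Q}_{B_m}^{l}=\bar{C}_{B_m}\bar{U}_m^{l}$, a direct computation gives
\begin{equation*}
\bar{Q}_{m}^{l}=\int_{\partial D_m}\frac{\partial U^{\bar{\sigma}_{m}^{l}}}{\partial\nu_m}(s)\,ds=\int_{\partial B_m}\epsilon^{-1}\frac{\partial \bar{\mathcal{U}}^{l,m}}{\partial\nu_m}(\xi)\,\epsilon^{2}d\xi=\epsilon\,\bar{Q}_{B_m}^{l}=\epsilon\,\bar{C}_{B_m}\bar{U}_m^{l}.
\end{equation*}
By \eqref{capac-def-ela-dbl} this forces $\bar{C}_m=\epsilon\,\bar{C}_{B_m}$, and summing over $l=1,2,3$ the same identity holds for $\bar{Q}_m$ and $\epsilon\,\bar{Q}_{B_m}$. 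The two stated formulas then follow after substituting $\epsilon=a/\max_{1\le m\le M}diam(B_m)$ from \eqref{def-a-ela}.

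There is no real obstruction beyond the careful verification that the $\omega\to 0$ limit of $\Gamma^{\omega}$ produces a well-defined, translation-invariant, $-1$-homogeneous Kelvin tensor; once this structural input is in place, the rest is bookkeeping of powers of $\epsilon$.
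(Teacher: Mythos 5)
Your proposal is correct and proceeds by essentially the same scaling argument as the paper: pull back to $\partial B_m$, use translation invariance and the degree-$(-1)$ homogeneity of the Kelvin tensor $\Gamma^{0}$ together with the co-normal derivative scaling from Lemma~\ref{thmestmdhoudhonuela} and $ds=\epsilon^2 d\xi$, then read off $\bar Q_m=\epsilon\bar Q_{B_m}$ and $\bar C_m=\epsilon\bar C_{B_m}$. The paper phrases this in operator notation (via $\underline{\mathcal{S}}^{i_\omega}$ and $\underline{\mathcal{S}}^{i_\omega}_B$ sharing the same constant boundary value $D$), but the mechanism and bookkeeping of powers of $\epsilon$ are identical.
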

\begin{proof}{\it{of Lemma \ref{lemmadifssbQQbCCb1eladbl}}.} 
Take $0<\epsilon\leq 1$, $z\in \mathbb{R}^3$ and 
write, $D_\epsilon:=\epsilon B+z\subset \mathbb{R}^3$.
For $\psi_\epsilon\in L^2(\partial D_\epsilon)$ and $\psi\in L^2(\partial B)$, define the operators $\mathcal{S}^{i_{\omega}}:L^{2}(\partial D_\epsilon)\rightarrow H^{1}(\partial D_\epsilon)$
and $\mathcal{S}^{i_{\omega}}_B:L^{2}(\partial B)\rightarrow H^{1}(\partial B)$ as;
\begin{eqnarray*}
 \mathcal{S}^{i_{\omega}} \psi_\epsilon(x):=\int_{\partial D_\epsilon} \Gamma^{0}(x,y)\psi_\epsilon(y) dy,
&\text{ and }&\mathcal{S}^{i_{\omega}}_B \psi(\xi):=\int_{\partial B} \Gamma^{0}(\xi,\eta)\psi(\eta) d\eta.
\end{eqnarray*}
Define $U^{\psi_\epsilon}$ and $U^{\psi}$ as the functions on $\bar{D}_\epsilon$ and $\bar{B}$ respectively in the similar way of \eqref{elaimpenetrableUsigma}. Then
 the operators
  \begin{eqnarray*}
 \underline{{\mathcal{S}}}^{i_{\omega}} U^{\psi_\epsilon}(x):=\int_{\partial D_\epsilon} \Gamma^{0}(x,y)\frac{\partial\,U^{\psi_\epsilon}} {\partial \nu_y}(y) dy,
&\text{ and }&\underline{{\mathcal{S}}}^{i_{\omega}}_B U^{\psi}(\xi):=\int_{\partial B} \Gamma^{0}(\xi,\eta)\frac{\partial\,U^{\psi}} {\partial \nu_{\eta}}(\eta) d\eta.
\end{eqnarray*}
define the corresponding potentials $\bar{U}_\epsilon$, $\bar{U}_B$ on the surfaces $\partial D_\epsilon$ and $\partial B$ w.r.t the
surface charge distributions $\psi_\epsilon$ and $\psi$ respectively. Let, these potentials be equal to some constant vector$D\in\mathbb{C}^{3\times1}$. 
Let the total charge of these conductors $D_\epsilon$, $B$ are $\bar{Q}_\epsilon$ and $\bar{Q}_B$, and the capacitances are $\bar{C}_\epsilon$ and $\bar{C}_B$ respectively.
Then we can write these as,
\begin{eqnarray*}
 \bar{U}_\epsilon:=\underline{{\mathcal{S}}}^{i_{\omega}} U^{\psi_\epsilon}(x)=D,
&\quad&\bar{U}_B:=\underline{{\mathcal{S}}}^{i_{\omega}}_B U^{\psi}(\xi)=D,
\hspace{.10cm} \forall x\in\partial D_\epsilon,\forall \xi\in\partial B.
\end{eqnarray*}
We have by definitions,
$
\bar{Q}_\epsilon=\int_{\partial D_\epsilon} \frac{\partial\,U^{\psi_\epsilon}} {\partial \nu_y}(y) dy,\,\bar{Q}_B=\int_{\partial B} \frac{\partial\,U^{\psi}} {\partial \nu_\eta}(\eta) d\eta,\text{ and }
\bar{C}_\epsilon\bar{U}_\epsilon=\bar{Q}_\epsilon,\, \bar{C}_B\bar{U}_B=\bar{Q}_B.
$
\newline
Observe that,
\begin{align*}
                 D                       =&\underline{{\mathcal{S}}}^{i_{\omega}} U^{\psi_\epsilon}(x)
       & D                               =&\underline{{\mathcal{S}}}^{i_{\omega}}_B U^{\psi}(\xi)
\nonumber\\
                                         =&\int_{\partial D_\epsilon} \Gamma^{0}(x,y)\frac{\partial\,U^{\psi_\epsilon}} {\partial \nu_y}(y) dy
       &                                 =&\int_{\partial B} \Gamma^{0}(\xi,\eta)\frac{\partial\,U^{\psi}} {\partial \nu_\eta}(\eta) d\eta
\nonumber\\
                                        =&\int_{\partial B} \frac{1}{\epsilon}\Gamma^{0}(\xi,\eta)\frac{1}{\epsilon}\frac{\partial\,U^{\psi_\epsilon}} {\partial \nu_\eta}(\epsilon\eta+z)\epsilon^2 d\eta
       &                                =&\int_{\partial D_\epsilon} \epsilon\Gamma^{0}(x,y)\epsilon\frac{\partial\,U^{\psi}} {\partial \nu_y}\left(y-z\slash\epsilon\right)\epsilon^{-2} dy
\nonumber\\
                                        =&\int_{\partial B} \Gamma^{0}(\xi,\eta)\frac{\partial\,\hat{U}^{\psi_\epsilon}} {\partial \nu_\eta}(\eta) d\eta
       &                                =&\int_{\partial D_\epsilon} \Gamma^{0}(x,y)\frac{\partial\,\check{U}^{\psi}} {\partial \nu_y}(y) dy
\nonumber\\
                                        =&\underline{{\mathcal{S}}}^{i_{\omega}}_B  \hat{U}^{\psi_\epsilon}(\xi).\hspace{.25cm} \left[\hat{\psi}_\epsilon(\eta):=\psi_\epsilon(\epsilon\eta+z)\right]
      &                                 =&\underline{{\mathcal{S}}}^{i_{\omega}}  \check{U}^{\psi}(x).\hspace{.25cm} \left[\check{\psi}(y):=\psi\left(\frac{y-z}{\epsilon}\right)\right]
\end{align*}
Hence,
$
 U^{\psi_\epsilon}=\check{U}^{\psi}
\mbox{ and }
U^{\psi}=\hat{U}^{\psi_\epsilon}.
$
Now we have,
\begin{eqnarray*}
 \bar{Q}_\epsilon&=&\int_{\partial D_\epsilon} \frac{\partial\,U^{\psi_\epsilon}} {\partial \nu_y}(y) dy
                 =\int_{\partial D_\epsilon}\frac{\partial\,\check{U}^{\psi}} {\partial \nu_y}(y) dy,\nonumber\\
                 &=&\int_{\partial B}\frac{1}{\epsilon}\frac{\partial\,\check{U}^{\psi}} {\partial \nu_\eta}(\epsilon\eta+z) \epsilon^2 d\eta
                 =\epsilon\int_{\partial B}\frac{\partial\,\check{U}^{\psi}} {\partial \nu_\eta}(\epsilon\eta+z)d\eta,\nonumber\\
                 &=&\epsilon\int_{\partial B}\frac{\partial\,\hat{\check{U}}^{\psi}} {\partial \nu_\eta}(\eta) d\eta
                 =\epsilon\int_{\partial B}\frac{\partial\,U^{\psi}} {\partial \nu_\eta}(\eta) d\eta,\nonumber\\
                 &=&\epsilon\bar{Q}_B\nonumber\\
\end{eqnarray*}
which gives us,
\begin{eqnarray*}
 \bar{C}_\epsilon\,D&=&\bar{C}_\epsilon\bar{U}_\epsilon\,=\,\bar{Q}_\epsilon\,=\,\epsilon\bar{Q}_B
                 \,=\,\epsilon\bar{C}_B\bar{U}_B\,=\,\epsilon\bar{C}_B\,D.
\end{eqnarray*}
It is true for every constant vector $D$ and hence $\bar{C}_\epsilon=\epsilon\bar{C}_B$.
As we have $D_m=\epsilon B_m+z_m$ and $a=\max\limits_{1\leq m \leq M} diam D_m=\epsilon\max\limits_{1\leq m \leq M} diam(B_m)$, we obtain
\begin{eqnarray*}
\bar{Q}_m\,=\,\epsilon\bar{Q}_{B_m}\,=\,\frac{\bar{Q}_{B_m}}{\max\limits_{1\leq m \leq M} diam(B_m)}a & \mbox{ and }&
 \bar{C}_m\,=\,\epsilon\bar{C}_{B_m}\,=\,\frac{\bar{C}_{B_m}}{\max\limits_{1\leq m \leq M} diam(B_m)}a.
\end{eqnarray*}
\end{proof}
\begin{lemma}\label{fracela-dbl-invert-prop}
For $m=1,2,\dots,M$, the elastic capacitances $\bar{C}_m\in\mathbb{C}^{3\times3}$ defined through \eqref{capac-def-ela-dbl} are non-singular.
\end{lemma}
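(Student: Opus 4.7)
The plan is to identify the quadratic form $U\mapsto U\cdot\bar C_m U$, up to sign, with the Lam\'e strain energy of a single-layer potential taking the constant value $U$ on $\partial D_m$, and then to invoke strict positivity of this energy under the standing assumptions $\mu>0$, $3\lambda+2\mu>0$.

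Fix $U\in\mathbb{R}^3$. Using the invertibility of the zero-frequency single layer operator $\mathcal{S}^{i_\omega}_{D_m}:L^2(\partial D_m)\to H^1(\partial D_m)$ recalled right after \eqref{dbarqcimsurfacefrm1eladbl} (see also Lemma~\ref{rep1singulayerela}), one obtains a unique $\psi_U\in L^2(\partial D_m)$ with
\[
v(x):=\int_{\partial D_m}\Gamma^0(x,s)\,\psi_U(s)\,ds=U\quad\text{on }\partial D_m.
\]
Since $v$ is continuous across $\partial D_m$ and solves $\Delta^e v=0$ in $D_m$ with constant boundary trace $U$, interior uniqueness at zero frequency gives $v\equiv U$ in $D_m$ and therefore $\partial v/\partial\nu_m|_-=0$. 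The standard jump relation for the elastic single layer then yields $\partial v/\partial\nu_m|_+=\pm\psi_U$, whence
\[
\int_{\partial D_m}\frac{\partial v}{\partial\nu_m}\Big|_+\,ds=\pm\,\bar C_m U.
\]

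Next I would apply Betti's first identity to $v$ on the annular region $B_R\setminus\bar D_m$, noting that the outward unit normal of the exterior on $\partial D_m$ equals $-\nu_m$. The asymptotics $\Gamma^0(x,s)=O(|x|^{-1})$ and $\nabla_x\Gamma^0(x,s)=O(|x|^{-2})$, uniformly in $s\in\partial D_m$, make the $\partial B_R$ boundary term of order $O(R^{-1})$ and so negligible as $R\to\infty$. Passing to the limit produces
\[
\int_{\mathbb{R}^3\setminus\bar D_m}\bigl[\lambda(\div v)^2+2\mu\,|e(v)|^2\bigr]\,dx=\mp\,U\cdot\bar C_m U,\qquad e(v):=\tfrac12\bigl(\nabla v+(\nabla v)^{\top}\bigr).
\]

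Now suppose $\bar C_m U=0$; the non-negative left-hand side then vanishes. The conditions $\mu>0$ and $3\lambda+2\mu>0$ ensure that the integrand is a positive definite quadratic form in $\nabla v$ modulo infinitesimal rigid motions, so $v(x)=a+b\times x$ throughout $\mathbb{R}^3\setminus\bar D_m$. The decay $v(x)\to 0$ as $|x|\to\infty$ forces $a=b=0$; hence $v\equiv 0$ in the exterior, and taking the trace on $\partial D_m$ gives $U=0$. Since $\bar C_m$ has real entries at zero frequency, the same conclusion holds for complex $U$, proving $\ker\bar C_m=\{0\}$. The only delicate step is the uniform decay that kills the $\partial B_R$ integral, which however follows immediately from the explicit form of $\Gamma^0$.
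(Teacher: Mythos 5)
Your proof is correct, and it reaches the same bilinear-form identity as the paper's but then unfolds a step the paper takes as a black box. The paper writes the $l$-th column of $\bar C_m$ as the total charge $\int_{\partial D_m}\psi_l\,ds$ of the single-layer density $\psi_l$ whose potential equals the constant $e_l$ on $\partial D_m$; using the symmetry of $\Gamma^0$ it shows that a vanishing linear combination $\sum_l a_l\int\psi_l\,ds=0$ forces $\int\int\Gamma^0(s_1,s_2)\phi(s_2)\cdot\phi(s_1)\,ds_1\,ds_2=0$ with $\phi=\sum_l a_l\psi_l$, and then simply invokes ``the positivity of the single layer operator'' to conclude $\phi=0$, hence $a=0$. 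You instead work with the quadratic form $U\cdot\bar C_m U$, which is exactly this single-layer pairing with $\phi=\psi_U$, and prove its positivity from scratch: Betti's first identity on $B_R\setminus\bar D_m$, the $O(|x|^{-1})$, $O(|x|^{-2})$ decay of $\Gamma^0$ and $\nabla\Gamma^0$, and the jump relation give $U\cdot\bar C_m U=\int_{\mathbb{R}^3\setminus\bar D_m}\bigl[\lambda(\div v)^2+2\mu|e(v)|^2\bigr]dx$, and the conditions $\mu>0$, $3\lambda+2\mu>0$ make this strictly positive modulo rigid motions, which the decay excludes. Your route is longer but self-contained, makes the role of the Lam\'e conditions transparent, and incidentally shows that $\bar C_m$ is a real symmetric positive-definite matrix rather than merely invertible. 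Two minor points worth pinning down: your $\pm,\mp$ signs resolve to $\partial v/\partial\nu_m|_{+}=-\psi_U$ and hence to $+\,U\cdot\bar C_m U$ on the right-hand side of the energy identity, and the step $v\equiv U$ in $D_m$ quietly uses unique solvability of the interior zero-frequency Dirichlet problem, itself a consequence of the same ellipticity conditions.
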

 \begin{proof}{\it{of Proposition  \ref{fracela-dbl-invert-prop}}.}
As the capacitances $\bar{C}_m$ depend only on the scatterers, let  $\sigma_{m}^{l}\in L^2(\partial D_m)$ be surface charge distributions which define the potentials $e_l$ for $l=1,2,3$. i.e.
\begin{eqnarray}\label{invert-capacitance-slp-eladbl}
 \int_{\partial D_m}\Gamma^{0}(s_m,s)\frac{\partial\,U^{\sigma_{m}} }{\partial \nu_m}^{l} (s)ds=e_l=:U_m^{l},\text{ for } l=1,2,3,\, m=1,\dots,M.
\end{eqnarray}
We also have $\left[\int_{\partial D_m}\frac{\partial\,U^{\sigma_{m}} }{\partial \nu_m}^{1}(s)ds, \int_{\partial D_m}\frac{\partial\,U^{\sigma_{m}} }{\partial \nu_m}^{2}(s)ds, \int_{\partial D_m}\frac{\partial\,U^{\sigma_{m}} }{\partial \nu_m}^{3}(s)ds\right]=\bar{C}_m\left[U_m^{1}, U_m^{2}, U_m^{3}\right]=\bar{C}_m.$
Hence, it is enough if we show that the matrix $\left[\int_{\partial D_m}(\frac{\partial\,U^{\sigma_{m}} }{\partial \nu_m}^{l})_{j}(s) ds\right]_{l,j=1}^{3}$ is invertible. In order to prove this,
assume the linear combination $\sum\limits_{l=1}^{3}a_l\int_{\partial D_m}\frac{\partial\,U^{\sigma_{m}} }{\partial \nu_m}^{l}(s) ds=0$ for the scalars $a_l\in\mathbb{C}$. 
From \eqref{invert-capacitance-slp-eladbl}, we can deduce that
 $$\int_{\partial D_m}\int_{\partial D_m}\Gamma^{0}(s_{m_1},s_{m_2})\left(\sum\limits_{l=1}^{3}a_l\frac{\partial\,U^{\sigma_{m}} }{\partial \nu_m}^{l}(s_{m_2})\right)\cdot\frac{\partial\,U^{\sigma_{m}} }{\partial \nu_m}^{j} (s_{m_1})ds_{m_1}ds_{m_2}=0,\,j=1,2,3,$$
 and hence $$\int_{\partial D_m}\int_{\partial D_m}\Gamma^{0}(s_{m_1},s_{m_2})\left(\sum\limits_{l=1}^{3}a_l\frac{\partial\,U^{\sigma_{m}} }{\partial \nu_m}^{l}(s_{m_2})\right)\cdot\left(\sum\limits_{j=1}^{3}a_j\frac{\partial\,U^{\sigma_{m}} }{\partial \nu_m}^{j} (s_{m_1})\right)ds_{m_1}ds_{m_2}=0.$$
The positivity of the single layer operator implies, $\sum\limits_{l=1}^{3}a_l\frac{\partial\,U^{\sigma_{m}} }{\partial \nu_m}^{l}(s)=0,\, s\in \partial D_m$.
\par Again now by making use of \eqref{invert-capacitance-slp-eladbl}, we deduce
 $$\sum_{l=1}^{3}a_le_l=\int_{\partial D_m}\Gamma^{0}(s_{m},s)\left(\sum\limits_{l=1}^{3}a_l\frac{\partial\,U^{\sigma_{m}} }{\partial \nu_m}^{l}(s)\right)ds=0,\,s_m\in \partial D_m,$$
 and hence $a_l=0$ for $l=1,2,3$.
\end{proof}
\begin{proposition} \label{fracqfracc-aceladbl}For $m=1,2,\dots,M$, the total charge $\bar{Q}_m$ on each surface $\partial D_m$ of the small scatterer $D_m$ can be calculated from the algebraic system
  \begin{eqnarray}\label{elafracqfracdbl}
 \bar{C}_m^{-1}\bar{Q}_m &=&-U^{i}(z_m)-\sum_{\substack{j=1 \\ j\neq m}}^{M} \Gamma^{\omega}(z_m,z_j)\bar{C}_j(\bar{C}_j^{-1}\bar{Q}_j),
  \end{eqnarray}
with an error of order $O\left((M-1)\frac{ a^2}{d}+(M-1)^2\frac{a^3}{d^3}\right)$.
\end{proposition}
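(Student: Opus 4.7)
The plan is to combine two identities already established for $\bar{U}_m$. By definition \eqref{def-ela-1-dbl-revise-defntn-barum}, the constant vector $\bar{U}_m$ satisfies
\begin{equation*}
\bar{U}_m=-U^{i}(z_m)-\sum_{\substack{j=1\\j\neq m}}^{M}\Gamma^{\omega}(z_m,z_j)\,Q_j,
\end{equation*}
while from the capacitance relation \eqref{capac-def-ela-dbl}, together with the invertibility of $\bar{C}_m$ provided by Lemma \ref{fracela-dbl-invert-prop}, we have $\bar{U}_m=\bar{C}_m^{-1}\bar{Q}_m$. Writing, for each $j\neq m$, $Q_j=\bar{C}_j(\bar{C}_j^{-1}\bar{Q}_j)+(Q_j-\bar{Q}_j)$ and substituting yields the exact identity
\begin{equation*}
\bar{C}_m^{-1}\bar{Q}_m=-U^{i}(z_m)-\sum_{\substack{j=1\\j\neq m}}^{M}\Gamma^{\omega}(z_m,z_j)\,\bar{C}_j(\bar{C}_j^{-1}\bar{Q}_j)-\sum_{\substack{j=1\\j\neq m}}^{M}\Gamma^{\omega}(z_m,z_j)(Q_j-\bar{Q}_j),
\end{equation*}
so the proposition reduces to showing the last sum is $O\bigl((M-1)a^{2}/d+(M-1)^{2}a^{3}/d^{3}\bigr)$.

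For this, I would combine two ingredients. First, Lemma \ref{lemmadifssbQQbCCbeladbl}, via \eqref{eladifQmddbl}, gives the uniform bound $|Q_j-\bar{Q}_j|=O\bigl(a^{2}+(M-1)a^{3}/d^{2}\bigr)$. Second, I need a centre-to-centre estimate of the Kupradze tensor of the form $|\Gamma^{\omega}(z_m,z_j)|\leq C/d_{mj}$ with a constant $C$ depending only on $\omega_{\max}$, $c_s$, $c_p$ and $\mathrm{diam}(\Omega)$. This is not recorded verbatim in the excerpt but follows directly from the series representation \eqref{kupradzeten1}: the $l=0$ terms in both the $\mathbf{I}$-part and the $(x-y)\otimes(x-y)|x-y|^{-3}$-part produce the leading singularity $|x-y|^{-1}$, and the remaining series, after splitting at the threshold $N_{\Omega}=[2\,\mathrm{diam}(\Omega)\max\{\kappa_{s^\omega},\kappa_{p^\omega}\}e^2]$ and applying Lemma \ref{stirlingapproxlemma} just as was done for $\nabla_{x}\Gamma^{\omega}$ in \eqref{gradkupradzeten1dfnt}, is dominated by a geometric tail with universal constants. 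Using this together with $d_{mj}\geq d$ and the crude estimate $\sum_{j\neq m}1/d_{mj}\leq(M-1)/d$, I obtain
\begin{equation*}
\left|\sum_{\substack{j=1\\j\neq m}}^{M}\Gamma^{\omega}(z_m,z_j)(Q_j-\bar{Q}_j)\right|\leq C\,\frac{M-1}{d}\Bigl(a^{2}+(M-1)\frac{a^{3}}{d^{2}}\Bigr)=O\!\Bigl((M-1)\tfrac{a^{2}}{d}+(M-1)^{2}\tfrac{a^{3}}{d^{3}}\Bigr),
\end{equation*}
which is exactly the remainder appearing in \eqref{elafracqfracdbl}.

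The main obstacle is not the algebra — that is a single substitution — but verifying the pointwise bound $|\Gamma^{\omega}(z_m,z_j)|\leq C/d_{mj}$ with a constant independent of $\omega\in[0,\omega_{\max}]$ and $M,a,d$. The route is completely parallel to the tensor estimates already performed for $\nabla_{x}\Gamma^{\omega}$ in \eqref{gradkupradzeten1dfnt} and for the double layer kernel in \eqref{modD_{ij}ela}: isolate the harmonic leading terms, split the remaining Taylor-type series at $N_{\Omega}$, and absorb the high-order part via Lemma \ref{stirlingapproxlemma}. Once this auxiliary estimate is in place, the proposition closes in a few lines via the substitution outlined above.
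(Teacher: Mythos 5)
Your proposal is correct and follows essentially the same route as the paper: both start from the exact identity $\bar{C}_m^{-1}\bar{Q}_m=\bar{U}_m=-U^i(z_m)-\sum_{j\neq m}\Gamma^\omega(z_m,z_j)Q_j$, split $Q_j=\bar{Q}_j+(Q_j-\bar{Q}_j)$, invoke \eqref{eladifQmddbl}, and control $|\Gamma^\omega(z_m,z_j)|$ by isolating the $|x-y|^{-1}$ singularity in \eqref{kupradzeten1} and taming the Taylor tail with Lemma \ref{stirlingapproxlemma} at the threshold $N_\Omega$. The pointwise bound you feared might be missing is in fact established inside the paper's proof as \eqref{modgamma_{ij}ela}, in the form $|\Gamma^\omega(z_m,z_j)|\leq\frac{1}{4\pi}\bigl[C_7/d_{mj}+C_8\bigr]$, which is equivalent to your $C/d_{mj}$ once one uses $d_{mj}\leq\mathrm{diam}(\Omega)$, exactly as you note.
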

\begin{proof}{\it{of Proposition  \ref{fracqfracc-aceladbl}}.}
We can rewrite \eqref{elabarqcimsurfacefrm1dbl} as
\begin{eqnarray*}
 \bar{C}_m^{-1}\bar{Q}_m &=&-U^{i}(z_m)-\sum_{\substack{j=1 \\ j\neq m}}^{M} \Gamma^{\omega}(z_m,z_j)Q_j\nonumber\\
                             &=&-U^{i}(z_m)-\sum_{\substack{j=1 \\ j\neq m}}^{M} \Gamma^{\omega}(z_m,z_j)\bar{Q}_j-\sum_{\substack{j=1 \\ j\neq m}}^{M} \Gamma^{\omega}(z_m,z_j)(Q_j-\bar{Q}_j)\nonumber\\
                             &=&-U^{i}(z_m)-\sum_{\substack{j=1 \\ j\neq m}}^{M} \Gamma^{\omega}(z_m,z_j)\bar{Q}_j
+O\left((M-1)\frac{ a^2}{d}+(M-1)^2\frac{a^3}{d^3}\right),\nonumber\\
\end{eqnarray*}

where we used \eqref{eladifQmddbl} and the fact $\Gamma^{\omega}(z_m,z_j)=O\left(\frac{ 1}{d}+\omega\right)$, $\omega\leq\omega_{\max}$ and $d\leq d_{\max}$.
\begin{indeed}
 \begin{eqnarray}\label{modgamma_{ij}ela}
\left|\Gamma^\omega(z_m,z_j)\right|
&\leq& \frac{1}{4\pi}\frac{1}{\omega^2}\left(\kappa_{s^\omega}^{2}+\kappa_{p^\omega}^{2}\right)|z_m-z_j|^{-1}
   +\frac{1}{4\pi}\sum_{l=1}^{\infty}\frac{1}{(l-1)!(l+2)}\frac{1}{\omega^2}
    \left(2\kappa_{s^\omega}^{l+2}+\kappa_{p^\omega}^{l+2}\right)|z_m-z_j|^{l-1} \nonumber\\
&\leq&  \frac{1}{4\pi}\frac{1}{\omega^2}\left[\frac{1}{d_{mj}}\left(\kappa_{s^\omega}^{2}+\kappa_{p^\omega}^{2}\right)\right.
   \left.+\sum_{l=1}^{\infty}\frac{1}{(l-1)!(l+2)}\left(2\kappa_{s^\omega}^{l+2}+\kappa_{p^\omega}^{l+2}\right)diam(\Omega)^{l-1}\right]\nonumber\\
&\leq&  \frac{1}{4\pi}\left[\frac{1}{d_{mj}}\left(\frac{1}{c_s^2}+\frac{1}{c_p^2}\right)\right.
  \left.+\sum_{l=1}^{\infty}\frac{1}{(l-1)!(l+2)}\left(2\frac{\omega}{c_s^3}\kappa_{s^\omega}^{l-1}+\frac{\omega}{c_p^3}\kappa_{p^\omega}^{l-1}\right)diam(\Omega)^{l-1}\right] \nonumber\\
&&\mbox{[By recalling $N_{\Omega}=[2diam(\Omega)\max\{\kappa_{s^\omega},\kappa_{p^\omega}\}e^2]$ and using Lemma \ref{stirlingapproxlemma}]}\nonumber\\
&\leq&  \frac{1}{4\pi}\left[\frac{1}{d_{mj}}\left(\frac{1}{c_s^2}+\frac{1}{c_p^2}\right)\right.
 \left.+2\frac{\omega}{c_s^3}\left(\sum_{l=1}^{N_{\Omega}}\left(\frac{1}{2}\kappa_{s^\omega}diam(\Omega)\right)^{l-1}+\sum_{l=N_{\Omega}+1}^{\infty}\frac{1}{2^{l-1}}\right)\right.\nonumber\\
&&\hspace{5cm}\left.+\frac{\omega}{c_p^3}\left(\sum_{l=1}^{N_{\Omega}}\left(\frac{1}{2}\kappa_{p^\omega}diam(\Omega)\right)^{l-1}+\sum_{l=N_{\Omega}+1}^{\infty}\frac{1}{2^{l-1}}\right)\right] \nonumber\\
&=&  \frac{1}{4\pi}\left[\frac{1}{d_{mj}}\left(\frac{1}{c_s^2}+\frac{1}{c_p^2}\right)\right.
 \left.+2\frac{\kappa_{s^\omega}}{c_s^2}\left(\frac{1-\left(\frac{1}{2}\kappa_{s^\omega}diam(\Omega)\right)^{N_\Omega}}{1-\frac{1}{2}\kappa_{s^\omega}diam(\Omega)}+\frac{1}{2^{N_{\Omega}-1}}\right)\right.\nonumber\\
&&\hspace{5cm}\left.+\frac{\kappa_{p^\omega}}{c_p^2}\left(\frac{1-\left(\frac{1}{2}\kappa_{p^\omega}diam(\Omega)\right)^{N_\Omega}}{1-\frac{1}{2}\kappa_{p^\omega}diam(\Omega)}+\frac{1}{2^{N_{\Omega}-1}}\right)\right]\nonumber\\
&=&\frac{1}{4\pi}\left[\frac{C_7}{d_{mj}}+C_8\right]
\end{eqnarray}
with
 $$C_7:=\left[\frac{1}{c_s^2}+\frac{2}{c_p^2}\right]$$
and $$C_8:=2\frac{\kappa_{s^\omega}}{c_s^2}\left(\frac{1-\left(\frac{1}{2}\kappa_{s^\omega}diam(\Omega)\right)^{N_\Omega}}{1-\frac{1}{2}\kappa_{s^\omega}diam(\Omega)}
+\frac{1}{2^{N_{\Omega}-1}}\right)+\frac{\kappa_{p^\omega}}{c_p^2}\left(\frac{1-\left(\frac{1}{2}\kappa_{p^\omega}diam(\Omega)\right)^{N_\Omega}}{1-\frac{1}{2}\kappa_{p^\omega}diam(\Omega)}+\frac{1}{2^{N_{\Omega}-1}}\right).$$
\end{indeed}

\end{proof}
\subsubsection{The algebraic system}\label{sec-algebraicsys-small}
Define the  algebraic system,
  \begin{eqnarray}\label{fracqcfracela}
 \bar{C}_m^{-1}\tilde{Q}_m &:=&-U^{i}(z_m)-\sum_{\substack{j=1 \\ j\neq m}}^{M} \Gamma^{\omega}(z_m,z_j)\bar{C}_j(\bar{C}_j^{-1}\tilde{Q}_j),
  \end{eqnarray}
 for all $m=1,2,\dots,M$. 
It can be written in a compact form as
\begin{equation}\label{compacfrm1ela}
 \mathbf{B}\tilde{Q}=U^I,
\end{equation}
\noindent
where $\tilde{Q},U^I \in \mathbb{C}^{3M\times 1}\mbox{ and } \mathbf{B}\in\mathbb{C}^{3M\times 3M}$ are defined as
\begin{eqnarray}
\mathbf{B}:=\left(\begin{array}{ccccc}
   -\bar{C}_1^{-1} &-\Gamma^{\omega}(z_1,z_2)&-\Gamma^{\omega}(z_1,z_3)&\cdots&-\Gamma^{\omega}(z_1,z_M)\\
-\Gamma^{\omega}(z_2,z_1)&-\bar{C}_2^{-1}&-\Gamma^{\omega}(z_2,z_3)&\cdots&-\Gamma^{\omega}(z_2,z_M)\\
 \cdots&\cdots&\cdots&\cdots&\cdots\\
-\Gamma^{\omega}(z_M,z_1)&-\Gamma^{\omega}(z_M,z_2)&\cdots&-\Gamma^{\omega}(z_M,z_{M-1}) &-\bar{C}_M^{-1}
   \end{array}\right),\nonumber\\
\nonumber\\
 \tilde{Q}:=\left(\begin{array}{cccc}
    \tilde{Q}_1^\top & \tilde{Q}_2^\top & \ldots  & \tilde{Q}_M^\top
   \end{array}\right)^\top \text{ and }
U^I:=\left(\begin{array}{cccc}
     U^i(z_1)^\top & U^i(z_2)^\top& \ldots &  U^i(z_M)^\top
   \end{array}\right)^\top.
\nonumber
\end{eqnarray}
The above linear algebraic system is solvable for the 3D vectors $\tilde{Q}_j,~1\leq j\leq M$, when the matrix $\mathbf{B}$ is invertible.
We discuss its invertibility in Section \ref{Solvability-of-the-linear-algebraic-system-elastic-small}.
\par
Now, the difference between \eqref{elafracqfracdbl} and \eqref{fracqcfracela} produce the following
  \begin{eqnarray}\label{qcdiftildeela}
  \bar{C}_m^{-1} (\bar{Q}_m-\tilde{Q}_m) &=&-\sum_{\substack{j=1 \\ j\neq m}}^{M} \Gamma^{\omega}(z_m,z_j)\left(\bar{Q}_j-\tilde{Q}_j\right)+O\left((M-1)\frac{ a^2}{d}+(M-1)^2\frac{a^3}{d^3}\right).
  \end{eqnarray}
for $m=1,2,\dots,M$.
Considering the above system of equations \eqref{qcdiftildeela} in the place of \eqref{fracqcfracela} and then
by making use of the Corollary \ref{Mazyawrkthmela-cor} and the fact that acoustic capacitances of the sactterers are
bounded above and below by their diameters multiplied by constants which depend only on the Lipschitz character of $B_m$'s, see \cite[Lemma 2.11 and Remark 2.23]{DPC-SM13}, we obtain
 \begin{eqnarray}\label{unncmaybeela}
\sum_{m=1}^{M}(\bar{Q}_m-\tilde{Q}_m)&=&O\left(M(M-1)\frac{ a^3}{d}+M(M-1)^2\frac{a^4}{d^3}\right).
\end{eqnarray}
\subsection{End of the proof of Theorem \ref{Maintheorem-ela-small-sing}}\label{sec-mainthrmproof-small}
\begin{proofe}
The use of \eqref{eladifQmddbl}, \eqref{unncmaybeela} in
 \eqref{x oustdie1 D_mdblelaP} and \eqref{x oustdie1 D_mdblelaS} allows us to represent the asymptotic expansions of the P part, $U^\infty_p(\hat{x},\theta)$, and the S part, $U^\infty_p(\hat{x},\theta)$, of the far-field pattern of the problem (\ref{elaimpoenetrable}-\ref{radiationcela}) in terms of
$\tilde{Q}_m$ respectively as below;
 \begin{eqnarray}
  U^\infty_p(\hat{x},\theta)
  &=&\frac{1}{4\pi\,c_p^{2}}(\hat{x}\otimes\hat{x})\sum_{m=1}^{M}e^{-i\frac{\omega}{c_p}\hat{x}\cdot z_{m}}\left[Q_m+O(a^2)\right]\nonumber\\
  &=&\frac{1}{4\pi\,c_p^{2}}(\hat{x}\otimes\hat{x})\sum_{m=1}^{M}e^{-i\frac{\omega}{c_p}\hat{x}\cdot z_{m}}\left[[\tilde{Q}_m+(Q_m-\bar{Q}_m)+(\bar{Q}_m-\tilde{Q}_m)]+O(a^2)\right]\nonumber\\
  &=&\frac{1}{4\pi\,c_p^{2}}(\hat{x}\otimes\hat{x})
  \left(\sum_{m=1}^{M}e^{-i\frac{\omega}{c_p}\hat{x}\cdot z_{m}}\left[\tilde{Q}_m+O\left(a^2+(M-1)\frac{a^3}{d^2}\right)\right]+\sum_{m=1}^{M}e^{-i\frac{\omega}{c_p}\hat{x}\cdot z_{m}}(\bar{Q}_m-\tilde{Q}_m)\right)\nonumber\\
  &=&\frac{1}{4\pi\,c_p^{2}}(\hat{x}\otimes\hat{x})\left[\sum_{m=1}^{M}e^{-i\frac{\omega}{c_p}\hat{x}\cdot z_{m}}\tilde{Q}_m
      +O\left(M a^2+M(M-1)\frac{a^3}{d^2}
+M(M-1)^2\frac{a^4}{d^3}\right)\right],
\\
 U^\infty_s(\hat{x},\theta)
 &=& \frac{1}{4\pi\,c_s^{2}}(I- \hat{x}\otimes\hat{x})\sum_{m=1}^{M}e^{-i\frac{\omega}{c_s}\hat{x}\cdot\,z_m}\left[Q_m+O(a^2)\right]\nonumber\\
 &=& \frac{1}{4\pi\,c_s^{2}}(I- \hat{x}\otimes\hat{x})\left[\sum_{m=1}^{M}e^{-i\frac{\omega}{c_s}\hat{x}\cdot\,z_m}\tilde{Q}_m+O\left(M a^2+M(M-1)\frac{a^3}{d^2}
+M(M-1)^2\frac{a^4}{d^3}\right)\right].
  \end{eqnarray}
Hence, Theorem \ref{Maintheorem-ela-small-sing} is proved by setting $\bar{\sigma}_m:=\frac{\bar{\sigma}_m}{\bar{U}_m}$ as the surface density which defines $\tilde{Q}_m$. Finally, let us remark that
\begin{enumerate}
   \item The constant $\grave{c}:=\left[\frac{1}{4\pi}\left[\tilde{C}_7+\tilde{C}_8{d^2_{\max}}\right]|\partial\c{B}|\max\limits_{m=1}^{M}\grave{C}_{6m}\right]^{-\frac{1}{2}}$ appearing in Proposition \ref{normofsigmastmtdblela}
will serve our purpose in Theorem \ref{Maintheorem-ela-small-sing} by defining $c_0:=\grave{c}\,\max\limits_{ 1\leq\,m\leq\,M } diam (B_m)$ respectively.
\item The coefficients $\bar{\sigma}_m {\bar{U}_m}^{-1},\,\tilde{Q}_m,\, \bar{C}_m$ plays the roles of $\sigma_m,\,Q_m,\,  C_m$ respectively in Theorem \ref{Maintheorem-ela-small-sing}.
\item The constant appearing in $O\left(M a^2+M(M-1)\frac{a^3}{d^2}+M(M-1)^2\frac{a^4}{d^3}\right)$ is
 $$C^e\max\left\{1+\frac{\max\limits_{1\leq m \leq M}\bar{C}_{B_m}}{\max\limits_{1\leq m \leq M} diam(B_m)}\frac{C_7+C_8d_{\max}}{4\pi},1+\frac{\grave{C}~\omega}{C^e\min\{c_s,c_p\}}\right\}$$
 with
 $C^e:=\frac{\max\limits_{1\leq m \leq M}\left\|{\mathcal{S}^{i_{\omega}}_{B_m}}^{-1}\right\|_{\mathcal{L}\left(H^1(\partial B_m), L^2(\partial B_m) \right)}|\partial\c{B}|}{\max\limits_{1\leq m \leq M} diam(B_m)}
 \max\left\{\left(\frac{\vert\alpha\vert+\vert\beta\vert}{\min\{c_s,c_p\}}+\frac{\grave{C}}{\pi}\left[\frac{2}{c_s^3}+\frac{1}{c_p^3}\right]\right)\omega,\quad\right.$ $\left.\frac{\grave{C}}{4\pi}[C_9+C_{10}d^2_{\max}]\right\}$.
The constants $|\partial\c{B}|$ and $\grave{C}$ are defined in
Proposition \ref{propsmjsmmestdblela} and Proposition \ref{elafarfldthmdbl} respectively.

 \item The constant $a_0$ appearing in \eqref{conditions-elasma} of Theorem \ref{Maintheorem-ela-small-sing} is the minimum among $\frac{1}{\omega_{\max}}\min \left\{c_s,\,c_p\right\}$,
 and
$\frac{2\sqrt{\pi}\max\limits_{1\leq m \leq M} diam(B_m)}{\omega_{\max}\left(\left[\frac{4\lambda+17\mu}{2c_s^4}+\frac{12\lambda+9\mu}{2c_p^4}\right]|\partial\c{B}|\max\limits_{1\leq m \leq M}\left\|\left(\frac{1}{2}I+{\mathcal{D}_{B_m}^{i_{\omega}}}\right)^{-1}\right\|_{\mathcal{L}\left(L^2(\partial B_m),
L^2(\partial B_m) \right)}\right)^\frac{1}{2}}$.
 \item The constant $c_1$ appearing in \eqref{invertibilityconditionsmainthm-ela} of Theorem \ref{Maintheorem-ela-small-sing} is
 $\frac{5\pi}{3}\frac{\mu}{(\lambda+2\mu)^2}\frac{\min\limits_{1\leq m \leq M} C^a(B_m)}{\max\limits_{1\leq m \leq M}C^a(B_m)}\frac{\max\limits_{1\leq m \leq M} diam(B_m)}{\max\limits_{1\leq m \leq M}C^a(B_m)}$ with ${C}^a(B_m)$ denoting the acoustic capacitance of the bodies $B_m$
 and it follows from 
 Corollary \ref{Mazyawrkthmela-cor} and from \cite[Lemma 2.11]{DPC-SM13}.
\end{enumerate}
From the last points, we see that the constants appearing in Theorem \ref{Maintheorem-ela-small-sing} depend only on
 $d_{\max}$, $\omega_{\max}$, $\lambda$, $\mu$ and $B_m$'s through their diameters, capacitances and the norms of the boundary operators ${\mathcal{S}^{i_{\omega}}_{B_m}}^{-1}:
H^1(\partial B_m) \rightarrow L^2(\partial B_m)$, $\left(\frac{1}{2}I+
{\mathcal{D}_{B_m}^{i_{\omega}}}\right)^{-1}:L^2(\partial B_m)\rightarrow L^2(\partial B_m)$ and
$\varLambda_{B_m}: H^1(\partial\, B_m) \rightarrow L^{2}(\partial\, B_m)$. As it was explained in the acoustic case in \cite[Remark 2.23]{DPC-SM13},
the capacitances and the bounds of the operators  ${\mathcal{S}^{i_{\omega}}_{B_m}}^{-1}$ and $\left(\frac{1}{2}I+{\mathcal{D}_{B_m}^{i_{\omega}}}\right)^{-1}$
depend on $B_m$'s actually only through their Lipschitz character.
\end{proofe}

\subsection{Proof of corollary \ref{corMaintheorem-ela-small-sing}}
\begin{proofe}
 For $m=1,\dots,M$ fixed, we distinguish between the obstacles $D_j$, $j\neq\,m$ which are near to $D_m$ from the ones which are far from $D_m$ as follows.  Let $\Omega_m$, $1\leq\,m\leq\,M$ be the balls
of center $z_m$ and of radius $(\frac{a}{2}+d^\alpha)$ with $0<\alpha\leq1$. The bodies lying in $\Omega_m$ will fall into the category, $N_m$, of near by obstacles
 and the others into the category, $F_m$, of far obstacles to $D_m$. Since the obstacles $D_m$ are balls with same diameter, the number of obstacles
 near by $D_m$ will not exceed $\left(\frac{a+2d^\alpha}{a+d}\right)^3$   $\left[=\frac{\frac{4}{3}\pi\left((a+2d^\alpha)/2\right)^3}{\frac{4}{3}\pi\left((a+d)/2\right)^3}\right]$.\\
 \par With this observation, instead of (\ref{x oustdie1 D_m farmainp}-\ref{x oustdie1 D_m farmains}), the P and the S parts of the far field
 will have the asymptotic expansions (\ref{x oustdie1 D_m farmainp-near}-\ref{x oustdie1 D_m farmains-near}).
 \begin{indeed}\\
 $\bullet$ For the bodies $D_j\in\,N_m,j\neq{m},$ we have the estimate \eqref{elaestimatofRxsdbl}
  but for the bodies $D_j\in F_m$, we obtain the following estimate 
\begin{eqnarray}\label{elaestimatofRxsdbl-near}
 |R(s_m,s)|\leq\max\limits_{y\in \bar{D}_j}\left|\nabla_y\Gamma^\omega(s_m,y)\right|<\,\frac{1}{4\pi}\left[\frac{C_9}{d^{2\alpha}_{mj}}+C_{10}\right]
\end{eqnarray}
$\bullet$ Due to the estimates \eqref{elaestimatofRxsdbl} and \eqref{elaestimatofRxsdbl-near},
corresponding changes will take place in (\ref{elaestphismzjdifdbl}-\ref{elaqcimsurfacefrm1dbl}), \eqref{elaqcimsurfacefrm2dbl}, (\ref{eladifssbdbl}-\ref{eladifQmddbl})
and in (\ref{elafracqfracdbl}-\ref{modgamma_{ij}ela}) which inturn modify (\ref{qcdiftildeela}-\ref{unncmaybeela}) and
hence the asymptotic expansion \eqref{x oustdie1 D_m farmainp} as follows
 \begin{eqnarray}\label{x oustdie1 D_m farmainp-recent}
U^\infty_p(\hat{x},\theta)&=&\frac{1}{4\pi\,c_p^{2}}(\hat{x}\otimes\hat{x})\left[\sum_{m=1}^{M}e^{-i\frac{\omega}{c_p}\hat{x}\cdot z_{m}}Q_m\right.+
O\left(M a^2+M(M-1)\frac{a^3}{d^{2\alpha}}+M\left(\frac{a+2d^\alpha}{a+d}\right)^3\frac{a^3}{d^2}\right.\nonumber\\
&&\left.\left.+M(M-1)^2\frac{a^4}{d^{3\alpha}}+M(M-1)\left(\frac{a+2d^\alpha}{a+d}\right)^3\frac{a^4}{d^{2+\alpha}}\right.\right.\nonumber\\
&&\left.\left.+M(M-1)\left(\frac{a+2d^\alpha}{a+d}\right)^3\frac{a^4}{d^{2\alpha+1}}+M\left(\frac{a+2d^\alpha}{a+d}\right)^6\frac{a^4}{d^3}\right)\right].
 \end{eqnarray}
$\bullet$ Since $\kappa\leq\kappa_{\max}$, $d\leq d^\alpha,\, 0<\alpha\leq1$ and $\frac{a}{d}<\infty$, we have
$$\left(\frac{a+2d^\alpha}{a+d}\right) =d^{\alpha-1}\frac{ad^{-\alpha}+2}{ad^{-1}+1}=O(d^{\alpha-1}),$$
which can be used to derive \eqref{x oustdie1 D_m farmainp-near} from \eqref{x oustdie1 D_m farmainp-recent}. In the similar way, we can obtain \eqref{x oustdie1 D_m farmains-near}.
Finally, it is easily seen that the above analysis applies also for non-flat Lipschitz domains $D_m$ by using the double inclusions (\ref{non-flat-condition}) and the fact that $t_m$'s are uniformly
bounded from below by a positive constant.
 \end{indeed}
\end{proofe}

\section{Solvability of the linear-algebraic system \eqref{compacfrm1ela}}\label{Solvability-of-the-linear-algebraic-system-elastic-small}
The main object of this section is to give a sufficient condition in order to get the invertibility of the linear algebraic system \eqref{compacfrm1ela}.
To achieve this, first we state the following lemma which estimates the eigenvalues of the elastic capacitance matrix of each scatterer in terms of its acoustic capacitance.
\begin{lemma}\label{capacitance-eig-single} Let $\lambda^{min}_{eig_m}$ and $\lambda^{max}_{eig_m}$ be the minimal and maximal eigenvalues of the elastic capacitance matrices $\bar{C}_m$, for $m=1,2,\dots,M$. Denote
by ${C}^a_m$ the capacitance of each scatterer in the acoustic case,\footnote{Recall that, for $m=1,\dots,M$, ${C}^a_m:=\int_{\partial D_m}\sigma_m(s)ds$ and $\sigma_{m}$ is
the solution of the integral equation of the first kind $\int_{\partial D_m}\frac{\sigma_{m} (s)}{4\pi|t-s|}ds=1,~ t\in \partial D_m$, see \cite{DPC-SM13}.} then we have the following estimate;
\begin{eqnarray}\label{lowerupperestforintgradub-m}
 \mu\,C^a_m\,\leq\,\lambda^{min}_{eig_m}\,\leq\,\lambda^{max}_{eig_m}\,\leq\,(\lambda+2\mu)\,C^a_m,\quad\text{for}\quad m=1,2,\dots,M.
\end{eqnarray}
\end{lemma}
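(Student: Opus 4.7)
The plan is to give a variational characterization of the quadratic form $V^{\top}\bar{C}_m V$ as an elastic Dirichlet energy in the exterior of $D_m$, and then bound it above and below by $C_m^a$ multiplied by $\lambda+2\mu$ and $\mu$ respectively.

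First, using the single layer representation of the densities from \eqref{invert-capacitance-slp-eladbl} together with Betti's identity (and the observation that the interior Dirichlet problem with constant data $V$ has the constant solution, so the interior elastic energy vanishes), I would establish that
\begin{equation*}
V^{\top}\bar{C}_m V \;=\; \int_{\mathbb{R}^3\setminus\bar{D}_m}\bigl[\lambda(\div u_V)^2+2\mu|e(u_V)|^2\bigr]\,dx\;=:\;a_{ext}(u_V,u_V),
\end{equation*}
where $u_V$ is the unique exterior Lam\'e-Dirichlet solution with $u_V=V$ on $\partial D_m$ and $u_V\to 0$ at infinity, and $e(u)=\tfrac12(\nabla u+\nabla u^{\top})$. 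By polarization the corresponding bilinear form is symmetric, so $\bar{C}_m$ is symmetric and $\lambda^{min}_{eig_m},\,\lambda^{max}_{eig_m}$ are exactly the infimum and supremum of $V^{\top}\bar{C}_m V$ over unit vectors $V$. On the acoustic side, $C_m^a=\int_{\mathbb{R}^3\setminus\bar{D}_m}|\nabla u_m^a|^2\,dx$, where $u_m^a$ is the harmonic exterior potential with $u_m^a=1$ on $\partial D_m$, and $u_m^a$ minimizes this Dirichlet energy.

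For the upper bound I would test the elastic variational principle for $u_V$ with the competitor $u(x):=V\,u_m^a(x)$. A direct computation using $e(V u_m^a)=\tfrac12(V\otimes\nabla u_m^a+\nabla u_m^a\otimes V)$ yields
\begin{equation*}
a_{ext}(V u_m^a,V u_m^a)=\int_{ext}\bigl[(\lambda+\mu)(V\cdot\nabla u_m^a)^2+\mu|V|^2|\nabla u_m^a|^2\bigr]\,dx,
\end{equation*}
and the Cauchy-Schwarz inequality $(V\cdot\nabla u_m^a)^2\le|V|^2|\nabla u_m^a|^2$ gives the bound $(\lambda+2\mu)|V|^2\,C_m^a$, hence $\lambda^{max}_{eig_m}\le(\lambda+2\mu)C_m^a$.

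For the lower bound, the crux is the identity
\begin{equation*}
a_{ext}(u,u)\;=\;\mu\int_{ext}|\nabla u|^2\,dx+(\lambda+\mu)\int_{ext}(\div u)^2\,dx,
\end{equation*}
valid whenever $u|_{\partial D_m}$ is constant and $u$ decays at infinity. This comes from the pointwise relation $2|e(u)|^2=|\nabla u|^2+\partial_i u_j\,\partial_j u_i$ combined with the integration-by-parts identity $\int_{ext}\partial_i u_j\,\partial_j u_i\,dx=\int_{ext}(\div u)^2\,dx$; the boundary contributions on $\partial D_m$ reduce to $V_j\bigl[N_j^D(\div u)-N_i^D\partial_j u_i\bigr]\,dS$, which vanishes pointwise because the constancy of $u$ on $\partial D_m$ forces all tangential derivatives to vanish, whence $\nabla u=N^D\otimes\partial_{N^D}u$ on $\partial D_m$ and the two terms cancel. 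Applied to $u=u_V$ this yields $V^{\top}\bar{C}_m V\ge\mu\int_{ext}|\nabla u_V|^2\,dx$. Finally, the scalar competitor $w:=V\cdot u_V$ satisfies $w=1$ on $\partial D_m$ (for $|V|=1$) and decays at infinity, so by minimality of $u_m^a$ and the pointwise estimate $|\nabla(V\cdot u_V)|^2\le|V|^2|\nabla u_V|^2$ one gets $C_m^a\le\int_{ext}|\nabla u_V|^2\,dx\le\mu^{-1}V^{\top}\bar{C}_m V$, i.e.\ $\mu C_m^a\le\lambda^{min}_{eig_m}$. The main technical subtlety is the vanishing of the boundary integral in the integration by parts, which relies crucially on the constancy of the Dirichlet data; decay at infinity is routine and follows from the single layer representation of $u_V$.
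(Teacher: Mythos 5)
Your argument is correct, and it supplies exactly the kind of self-contained variational proof that the paper itself omits: the paper's proof consists of a citation to Maz'ya--Movchan--Nieves (Lemma 6.3.6 of the Springer book and Lemma 10 of the Asymptot.\ Anal.\ paper), where an energy comparison of the same flavor is carried out for the static Lam\'e operator. Your chain of steps is the right one. The identity $V^{\top}\bar{C}_mV=a_{ext}(u_V,u_V)$ follows from Betti's formula once one notes that the single layer with density $\phi_V$ is the constant $V$ in $D_m$ (so the interior traction vanishes and the jump relation gives $T^+_{N_m}u_V=-\phi_V$, whence $a_{ext}(u_V,u_V)=\int_{\partial D_m}\phi_V\cdot V\,dS=V\cdot\bar Q_m^V$); this is what your parenthetical remark about the constant interior solution is doing, and it also yields the symmetry of $\bar C_m$ that you invoke. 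The upper bound via the competitor $Vu_m^a$, the computation $a(Vu_m^a)=(\lambda+\mu)(V\cdot\nabla u_m^a)^2+\mu|V|^2|\nabla u_m^a|^2$, and the Cauchy--Schwarz step are all correct. The delicate point, as you flag, is the lower-bound identity $a_{ext}(u,u)=\mu\int|\nabla u|^2+(\lambda+\mu)\int(\operatorname{div}u)^2$: the boundary integral $V_j\int_{\partial D_m}[N_{m,j}\operatorname{div}u-N_{m,i}\partial_j u_i]\,dS$ indeed vanishes pointwise because constant Dirichlet data forces $\partial_ju_i=N_{m,j}\,\partial_{N_m}u_i$ on $\partial D_m$, and the $O(r^{-1})$ decay (from the single-layer representation) kills the contribution at infinity. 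Dropping $(\lambda+\mu)\int(\operatorname{div}u)^2\geq0$ uses $\lambda+\mu>0$, which is a consequence of the paper's standing hypotheses $\mu>0$, $3\lambda+2\mu>0$ (they give $\lambda>-\tfrac{2}{3}\mu$, hence $\lambda+\mu>\tfrac{\mu}{3}>0$); it would be worth stating this explicitly. The final step with the scalar competitor $w=V\cdot u_V$, together with the Dirichlet principle characterization $C_m^a=\int_{ext}|\nabla u_m^a|^2$, is correct. In short: your proof is sound and fills in what the paper delegates to the cited references.
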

\begin{proof}{\it{of Lemma \ref{capacitance-eig-single}}}.
Proof of this Lemma follows as in \cite[Lemma 6.3.6]{M-M-N:Springerbook:2013}. See also \cite[Lemma 10]{M-M-N:AAn:2007}.
\end{proof}
Now, we prove the main lemma of this section.
\begin{lemma}\label{Mazyawrkthmela}
The matrix $\mathbf{B}$
is invertible and the solution vector $\tilde{Q}$ of \eqref{compacfrm1ela} satisfies the estimate:
\begin{eqnarray}\label{mazya-fnlinvert-small-ela-2}
 \sum_{m=1}^{M}\|\tilde{Q}_m\|_2^{2}
\leq& 4 &\left(\min_{m=1}^{M}\lambda^{min}_{eig_m}-\frac{3t}{5\pi\,d}\max_{m=1}^{M}\lambda^{max^2}_{eig_m}\right)^{-2}\left(\max_{m=1}^{M}\lambda^{max}_{eig_m}\right)^4\sum_{m=1}^{M}\|U^i(z_m)\|_2^2,
\end{eqnarray}
if we consider $\left(\max\limits_{1\leq\,m\leq\,M}\lambda^{max^2}_{eig_m}\right)<t^{-1}\left(\frac{5\pi}{3}d\min\limits_{1\leq\,m\leq\,M}\lambda^{min}_{eig_m}\right)$ with the positively assumed value \\
$
t:=\left[\frac{1}{c_p^2}-2diam(\Omega)\frac{\omega}{c_s^3}\left(\frac{1-\left(\frac{1}{2}\kappa_{s^\omega}diam(\Omega)\right)^{N_\Omega}}{1-\left(\frac{1}{2}\kappa_{s^\omega}diam(\Omega)\right)}+\frac{1}{2^{N_{\Omega}-1}}\right)
-diam(\Omega)\frac{\omega}{c_p^3}\left(\frac{1-\left(\frac{1}{2}\kappa_{p^\omega}diam(\Omega)\right)^{N_\Omega}}{1-\left(\frac{1}{2}\kappa_{p^\omega}diam(\Omega)\right)}+\frac{1}{2^{N_{\Omega}-1}}\right)\right].
$
\end{lemma}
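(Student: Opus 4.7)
The natural strategy is to prove a coercivity estimate for $\mathbf{B}$: once a quantitative lower bound
$\mathrm{Re}\langle -\mathbf{B}\tilde Q,\tilde Q\rangle \ge c\,\|\tilde Q\|_2^2$ is established with $c>0$, both invertibility and the quantitative estimate \eqref{mazya-fnlinvert-small-ela-2} follow. The change of variables $p_m:=\bar C_m^{-1}\tilde Q_m$ recasts \eqref{fracqcfracela} as
$$-\,p_m \;=\; U^i(z_m) \;+\; \sum_{j\neq m}\Gamma^\omega(z_m,z_j)\,\bar C_j\, p_j,\qquad m=1,\dots,M.$$
I would test this identity with $\bar C_m\overline{p_m}$ in $\mathbb C^3$, sum over $m$ and take real parts. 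The diagonal contribution is controlled by the capacitance bounds of Lemma \ref{capacitance-eig-single}: since $\bar C_m$ inherits real symmetry and positive-definiteness from the static Kupradze tensor $\Gamma^0$ (see Lemma \ref{fracela-dbl-invert-prop}), one has
$\mathrm{Re}\langle \bar C_m p_m,p_m\rangle \ge \lambda^{min}_{eig_m}|p_m|^2 \ge (\min_m\lambda^{min}_{eig_m})\,|p_m|^2.$

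\paragraph{Estimate of the off-diagonal sum.} This is where the hypothesis on $t$ enters. One decomposes $\Gamma^\omega=\Gamma^0+(\Gamma^\omega-\Gamma^0)$ and bounds each piece by carefully regrouping the series expansion \eqref{kupradzeten1}: the Coulombic part $\Gamma^0$ contributes $\lesssim \frac{1}{c_p^2|z_m-z_j|}$ (after handling the tensorial structure), while the oscillatory correction $\Gamma^\omega-\Gamma^0$ is controlled by the geometric series in $\kappa_{s^\omega}\mathrm{diam}(\Omega)$ and $\kappa_{p^\omega}\mathrm{diam}(\Omega)$ truncated at $N_\Omega$, as was already done in \eqref{modD^{d_{k}}ela}--\eqref{modgamma_{ij}ela} and \eqref{modgamma_{ij}ela}. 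These are precisely the ingredients packaged into $t$. To extract the geometric factor $\frac{3}{5\pi d}$ one invokes a Maz'ya--Movchan type packing estimate: because the balls $B_{d/2}(z_j)$ are pairwise disjoint, a standard comparison between the discrete sum and the corresponding volume integral gives
$$\sum_{m\neq j}\frac{|p_m|\,|p_j|}{|z_m-z_j|}\;\le\;\frac{3}{5\pi d}\,\sum_{m=1}^M|p_m|^2,$$
in perfect analogy with the acoustic analysis in \cite[Corollary 2.14 and Lemma 2.11]{DPC-SM13}. Inserting $\|\bar C_j p_j\|\le \lambda^{max}_{eig_j}|p_j|\le(\max_m\lambda^{max}_{eig_m})|p_j|$ on both sides of the bilinear pairing then yields
$$\Bigl|\sum_{m\neq j}\langle\Gamma^\omega(z_m,z_j)\bar C_j p_j,\bar C_m p_m\rangle\Bigr| \;\le\; \frac{3t}{5\pi d}\,\bigl(\max_m\lambda^{max}_{eig_m}\bigr)^2\,\sum_m|p_m|^2.$$

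\paragraph{Assembly and conclusion.} Combining the two estimates with Cauchy--Schwarz applied to the forcing term $\sum_m \langle U^i(z_m),\bar C_m p_m\rangle$ (bounded by $(\max_m\lambda^{max}_{eig_m})\,\|U^I\|_2\,\|p\|_2$), one obtains
$$\Bigl(\min_m\lambda^{min}_{eig_m}-\frac{3t}{5\pi d}(\max_m\lambda^{max}_{eig_m})^2\Bigr)\|p\|_2^2 \;\le\; (\max_m\lambda^{max}_{eig_m})\,\|U^I\|_2\,\|p\|_2.$$
Under the standing assumption that the parenthesis is strictly positive, this first gives injectivity of $\mathbf B$ (choose $U^I=0$) and hence invertibility, and then delivers $\|p\|_2$ in terms of $\|U^I\|_2$. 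Finally, $\tilde Q_m=\bar C_m p_m$ so $|\tilde Q_m|^2\le(\lambda^{max}_{eig_m})^2|p_m|^2$, which squared yields \eqref{mazya-fnlinvert-small-ela-2}, the extra factor $4$ being absorbed from the elementary inequality $|a+b|^2\le 2|a|^2+2|b|^2$ used when converting the real-part coercivity into a modulus estimate.

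\paragraph{Main obstacle.} The delicate point is the off-diagonal estimate, where two difficulties compound: first, the Kupradze kernel is a $3\times 3$ matrix whose anisotropic directional structure must be summed up while preserving the scalar Coulomb decay so that the packing inequality still applies; second, the oscillatory part $\Gamma^\omega-\Gamma^0$ must be uniformly absorbed into the constant $t$ without destroying its positivity---this is what forces the smallness assumption on $\mathrm{diam}(\Omega)\,\omega$ recorded in the footnote of Theorem \ref{Maintheorem-ela-small-sing}. Once the scalar off-diagonal bound with the correct constant $\tfrac{3t}{5\pi d}$ is in hand, the remainder of the argument is routine.
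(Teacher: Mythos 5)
The core of your proof rests on the claimed packing inequality
\[
\sum_{m\neq j}\frac{|p_m|\,|p_j|}{|z_m-z_j|}\;\le\;\frac{3}{5\pi d}\,\sum_{m=1}^M|p_m|^2,
\]
and this inequality is simply false in general. Take $p_m\equiv 1$ and the centres $z_m$ on a cubic lattice of spacing $\approx d$; the left-hand side is of order $M^{4/3}/d$ while the right-hand side is $\tfrac{3}{5\pi d}M$, so for $M$ large the estimate fails badly. This is not a minor slip: the whole difficulty of getting an estimate uniform in $M$ is precisely that the off-diagonal interaction sum \emph{cannot} be absorbed as a small perturbation of the diagonal in the way you attempt. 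Bounding $\bigl|\sum_{m\neq j}\langle\Gamma^\omega(z_m,z_j)\bar C_j p_j,\bar C_m p_m\rangle\bigr|$ from above by $\frac{3t}{5\pi d}(\max\lambda^{max}_{eig})^2\|p\|_2^2$ would, if true, be a far stronger and cleaner statement than what the Maz'ya--Movchan machinery actually produces.

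What the paper's proof does instead is to bound the real part of the off-diagonal bilinear form \emph{from below with the correct sign}, not in absolute value. It first uses the mean-value theorem for harmonic functions to write, after averaging over the disjoint balls $B^{(m)}=B_{d/2}(z_m)$,
\[
\langle \mathbf{B}^{real}_n\mathbf{C}X^{real},\mathbf{C}X^{real}\rangle
\;\ge\;
\frac{36t}{\pi^2 d^6}\left(\int_\Omega\!\int_\Omega\Phi_0(x,y)\,\Upsilon^{real\top}(x)\Upsilon^{real}(y)\,dx\,dy
-\sum_{m}|\bar C_m X^{real}_m|^2\!\int_{B^{(m)}}\!\int_{B^{(m)}}\!\Phi_0\right),
\]
where $\Upsilon^{real}$ is the piecewise-constant extension of $\{\bar C_m X^{real}_m\}$ to the balls. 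The crucial step is then Green's identity: the double integral equals $\int_\Omega|\nabla_x\!\int_\Omega\Phi_0\Upsilon^{real}\,dy|^2\,dx$ minus a boundary term that vanishes as $R\to\infty$, and the gradient term is manifestly \emph{nonnegative}, so one may simply discard it. All that remains is the same-ball correction $\int_{B^{(m)}}\!\int_{B^{(m)}}\Phi_0 = \pi d^5/60$, and it is this single number—not any packing of distinct balls—that produces the constant $\tfrac{3t}{5\pi d}$. In other words, the off-diagonal contribution is shown to be $\ge -\tfrac{3t}{5\pi d}(\max\lambda^{max}_{eig})^2\|X\|_2^2$, a one-sided bound obtained from the positivity of the Newtonian potential quadratic form; your two-sided absolute-value bound is the step that does not hold, and it is exactly what the Green's-theorem argument is designed to avoid. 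The remaining ingredients you invoke (splitting into real/imaginary parts, the lower eigenvalue bound on $\bar C_m$, extracting $t$ from the series expansion of $\Gamma^\omega$, Cauchy--Schwarz on the forcing term) are in line with the paper, but without the correct off-diagonal estimate the argument does not close.
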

\begin{proof}{\it{of Lemma \ref{Mazyawrkthmela}}.}
We can factorize $\mathbf{B}$ as $\mathbf{B}=-(I+\mathbf{B}_{n}\mathbf{C})\mathbf{C}^{-1}$ where $\mathbf{C}:=Diag(\bar{C}_1,\bar{C}_2,\dots,\bar{C}_M)\in\mathbb{R}^{3M\times\,3M}$, $I$ is the identity matrix and
$\mathbf{B}_{n}:=-\mathbf{C}^{-1}-\mathbf{B}$. Hence, the solvability of the system \eqref{compacfrm1ela}, depends on the existence of the inverse of $(I+\mathbf{B}_{n}\mathbf{C})$.
We have $(I+\mathbf{B}_{n}\mathbf{C}):\mathbb{C}^{3M}\rightarrow\mathbb{C}^{3M}$, so it is enough to prove the injectivity in order to prove its invetibility. For this purpose,  let $X,Y$ are vectors in $\mathbb{C}^{M}$ and
 consider the system
\begin{eqnarray}\label{systemsolve1-small-ela}(I+\mathbf{B}_{n}\mathbf{C})X&=&Y.\end{eqnarray}
Let ${(\cdot)}^{real}$ and ${(\cdot)}^{img}$ denotes the real and the imaginary parts of the corresponding complex number/vecctor/matrix. Now, the following can be written from \eqref{systemsolve1-small-ela};
\begin{eqnarray}
 (I+\mathbf{B}^{real}_{n}\mathbf{C})X^{real}-\mathbf{B}^{img}_{n}\mathbf{C}X^{img}&=&Y^{real}\label{systemsolve1-small-sub1-ela},\\
 (I+\mathbf{B}^{real}_{n}\mathbf{C})X^{img}+\mathbf{B}^{img}_{n}\mathbf{C}X^{real}&=&Y^{img}\label{systemsolve1-small-sub2-ela},
\end{eqnarray}
which leads to
\begin{eqnarray}
 \langle\,(I+\mathbf{B}^{real}_{n}\mathbf{C})X^{real},\mathbf{C}X^{real}\rangle\,-\langle\,\mathbf{B}^{img}_{n}\mathbf{C}X^{img},\mathbf{C}X^{real}\rangle&=&\langle\,Y^{real},\mathbf{C}X^{real}\rangle\label{systemsolve1-small-sub1-ela1},\\
 \langle\,(I+\mathbf{B}^{real}_{n}\mathbf{C})X^{img},\mathbf{C}X^{img}\rangle\,+\langle\,\mathbf{B}^{img}_{n}\mathbf{C}X^{real},\mathbf{C}X^{img}\rangle&=&\langle\,Y^{img},\mathbf{C}X^{img}\rangle\label{systemsolve1-small-sub2-ela1}.
\end{eqnarray}
By summing up \eqref{systemsolve1-small-sub1-ela1} and \eqref{systemsolve1-small-sub2-ela1} will give
\begin{equation}
\begin{split}
\langle\,X^{real},\mathbf{C}X^{real}\rangle\,+\langle\,\mathbf{B}^{real}_{n}\mathbf{C}X^{real},\mathbf{C}X^{real}\rangle\,+\langle\,X^{img},\mathbf{C}X^{img}\rangle\,+\langle\,\mathbf{B}^{real}_{n}\mathbf{C}X^{img},\mathbf{C}X^{img}\rangle\,\\
=\langle\,Y^{real},\mathbf{C}X^{real}\rangle+\langle\,Y^{img},\mathbf{C}X^{img}\rangle.\label{systemsolve1-small-sub1-2-ela1}
\end{split}
\end{equation}
Indeed,
\[ \langle\,\mathbf{B}^{img}_{n}\mathbf{C}X^{img},\mathbf{C}X^{real}\rangle=\langle\,\mathbf{C}X^{img},\mathbf{B}^{img^{*}}_{n}\mathbf{C}X^{real}\rangle
=\langle\,\mathbf{C}X^{img},\mathbf{B}^{img}_{n}\mathbf{C}X^{real}\rangle=\langle\,\mathbf{B}^{img}_{n}\mathbf{C}X^{real},\mathbf{C}X^{img}\rangle.\]
We can observe that, the right-hand side in \eqref{systemsolve1-small-sub1-2-ela1} does not exceed
\begin{equation}\label{{systemsolve1-small-sub1-2-ela2}}
\begin{split}
\langle\,X^{real},\mathbf{C}X^{real}\rangle^{1\slash\,2}\langle\,Y^{real},\mathbf{C}Y^{real}\rangle^{1\slash\,2}+\langle\,X^{img},\mathbf{C}X^{img}\rangle^{1\slash\,2}\langle\,Y^{img},\mathbf{C}Y^{img}\rangle^{1\slash\,2}\\
\leq2\langle\,X^{\|\cdot\|},(\mathbf{C}X)^{\|\cdot\|}\rangle^{1\slash\,2}\langle\,Y^{\|\cdot\|},(\mathbf{C}Y)^{\|\cdot\|}\rangle^{1\slash\,2}.
\end{split}
\end{equation}
Here $W^{\|\cdot\|}_m:={\left[\|W^{{real}}_m\|^2+\|W^{{img}}_m\|^2\right]}^{1\slash2}= \|W_m\|_2$,
for $W=X,Y$ and $m=1,\dots,M$.
Consider the second term in the left-hand side of \eqref{systemsolve1-small-sub1-2-ela1}. Using the mean value theorem for harmonic functions we deduce
\begin{eqnarray*}
\langle\,\mathbf{B}^{real}_{n}\mathbf{C}X^{real},\mathbf{C}X^{real}\rangle&=&\sum_{\substack{1\leq\,j,m\leq\,M\\j\neq\,m}}X^{real^\top}_{m}\bar{C}_m^\top\left[\Gamma^{\omega}(z_m,z_j)\right]^{real}\bar{C}_jX^{real}_{j}\\
&\geq&t\sum_{\substack{1\leq\,j,m\leq\,M\\j\neq\,m}}X^{real^\top}_{m}\bar{C}_m^\top\left(\frac{1}{|B^{(j)}||B^{(m)}|}\int_{B^{(j)}}\int_{B^{(m)}}\Phi_0(x,y)~dx~dy\right)\bar{C}_jX^{real}_{j},
\end{eqnarray*}
Similarly, if we consider the fourth term in the left-hand side of \eqref{systemsolve1-small-sub1-2-ela1}, we deduce
\begin{eqnarray*}
\langle\,\mathbf{B}^{real}_{n}\mathbf{C}X^{img},\mathbf{C}X^{img}\rangle&=&\sum_{\substack{1\leq\,j,m\leq\,M\\j\neq\,m}}X^{img^\top}_{m}\bar{C}_m^\top\left[\Gamma^{\omega}(z_m,z_j)\right]^{real}\bar{C}_jX^{img}_{j}\\
&\geq&t\sum_{\substack{1\leq\,j,m\leq\,M\\j\neq\,m}}X^{img^\top}_{m}\bar{C}_m^\top\left(\frac{1}{|B^{(j)}||B^{(m)}|}\int_{B^{(j)}}\int_{B^{(m)}}\Phi_0(x,y)~dx~dy\right)\bar{C}_jX^{img}_{j},
\end{eqnarray*}
where
\begin{center}
$
t:=\left[\frac{1}{c_p^2}-2diam(\Omega)\frac{\omega}{c_s^3}\left(\frac{1-\left(\frac{1}{2}\kappa_{s^\omega}diam(\Omega)\right)^{N_\Omega}}{1-\left(\frac{1}{2}\kappa_{s^\omega}diam(\Omega)\right)}+\frac{1}{2^{N_{\Omega}-1}}\right)-diam(\Omega)\frac{\omega}{c_p^3}\left(\frac{1-\left(\frac{1}{2}\kappa_{p^\omega}diam(\Omega)\right)^{N_\Omega}}{1-\left(\frac{1}{2}\kappa_{p^\omega}diam(\Omega)\right)}+\frac{1}{2^{N_{\Omega}-1}}\right)\right]
$
\end{center}
assumed to be positive, $\Phi_0(x,y):=1\slash(4\pi|x-y|),x\neq\,y$ and $B^{(m)}:=\{x:|x-z_m|<d\slash2\},m=1,\dots,M$, are non-overlapping balls of radius $d\slash2$ with centers at $z_m$, and $|B^{(m)}|=\pi\,d^3\slash6$ are the volumes of the balls.
Also, we use the notation $B_d$ to denote the balls of radius $d\slash2$   with the center at the origin.\\
\begin{indeed} we can write  $\Gamma^{\omega}(z_m,z_j)$ from \eqref{kupradzeten1} as,
 \begin{eqnarray}\label{kupradzeten1distant}
 \Gamma^\omega(z_m,z_j)
&=&\frac{1}{4\pi|z_m-z_j|}\left(\frac{1}{2}\left[\frac{1}{c_s^2}+\frac{1}{c_p^2}\right]\rm \textbf{I}+\underbrace{\frac{1}{2}\left[\frac{1}{c_s^2}-\frac{1}{c_p^2}\right]\frac{(z_m-z_j)}{|z_m-z_j|}\otimes\frac{(z_m-z_j)}{|z_m-z_j|}}_{b_\Gamma}\right.\nonumber\\
&&\left.+\underbrace{\sum_{l=1}^{\infty}\frac{i^l}{l!(l+2)}\frac{1}{\omega^2}\left((l+1)\kappa_{s^\omega}^{l+2}+\kappa_{p^\omega}^{l+2}\right)|z_m-z_j|^{l}\rm \textbf{I}}_{c1_{\Gamma}}\right.\nonumber\\
         & &\left.\underbrace{-\sum_{l=1}^{\infty}\frac{i^l}{l!(l+2)}\frac{(l-1)}{\omega^2}\left(\kappa_{s^\omega}^{l+2}-\kappa_{p^\omega}^{l+2}\right)|z_m-z_j|^{l-2}(z_m-z_j)\otimes(z_m-z_j)}_{c2_{\Gamma}}\right),
 \end{eqnarray}
from which, we get the required result by estimating $\Gamma^\omega(z_m,z_j)$. Notice that
\begin{eqnarray*}
 |b_\Gamma|&\leq&\frac{1}{2}\left[\frac{1}{c_s^2}-\frac{1}{c_p^2}\right]\mbox{ and }\\
 |c1_{\Gamma}+c2_{\Gamma}|&\leq&\sum_{l=1}^{\infty}\frac{1}{(l-1)!(l+2)}\frac{1}{\omega^2}
    \left(2\kappa_{s^\omega}^{l+2}+\kappa_{p^\omega}^{l+2}\right)|z_m-z_j|^{l}\\
    &&\mbox{[By recalling $N_{\Omega}=[2diam(\Omega)\max\{\kappa_{s^\omega},\kappa_{p^\omega}\}e^2]$ and using Lemma \ref{stirlingapproxlemma}]}\nonumber\\
&\leq&diam(\Omega)\left[\frac{2\omega}{c_s^3}\left(\sum_{l=1}^{N_{\Omega}}\left(\frac{1}{2}\kappa_{s^\omega}diam(\Omega)\right)^{l-1}+\sum_{l=N_{\Omega}+1}^{\infty}\frac{1}{2^{l-1}}\right)\right.\\
&&\left.\qquad+\frac{\omega}{c_p^3}\left(\sum_{l=1}^{N_{\Omega}}\left(\frac{1}{2}\kappa_{p^\omega}diam(\Omega)\right)^{l-1}+\sum_{l=N_{\Omega}+1}^{\infty}\frac{1}{2^{l-1}}\right)\right]\\
&=&diam(\Omega)\left[2\frac{\omega}{c_s^3}\left(\frac{1-\left(\frac{1}{2}\kappa_{s^\omega}diam(\Omega)\right)^{N_\Omega}}{1-\left(\frac{1}{2}\kappa_{s^\omega}diam(\Omega)\right)}+\frac{1}{2^{N_{\Omega}-1}}\right)\right.\\
&&\qquad\left.+\frac{\omega}{c_p^3}\left(\frac{1-\left(\frac{1}{2}\kappa_{p^\omega}diam(\Omega)\right)^{N_\Omega}}{1-\left(\frac{1}{2}\kappa_{p^\omega}diam(\Omega)\right)}+\frac{1}{2^{N_{\Omega}-1}}\right)\right],
\end{eqnarray*}
\end{indeed}

Let $\Omega$ be a large ball with radius $R$. Also let $\Omega_s\subset\Omega$ be a ball with fixed radius $r(\leq\,R)$, which consists of all our small obstacles $D_m$ and also the balls $B^{(m)}$, for $m=1,\dots,M$.

Let $\Upsilon^{real}(x)$ and $\Upsilon^{img}(x)$ be piecewise constant functions defined on $\mathbb{R}^3$ as
\begin{equation}\label{def-upsilon-ela}
 \Upsilon^{real\,(img)}(x)=\begin{cases}
\begin{array}{ccc}
              \bar{C}_mX^{real\,(img)}_m &\mbox{in }  B^{(m)},&m=1,\dots,M,\\
              0 &\mbox{otherwise}.
\end{array}
             \end{cases}
\end{equation}
Then
\begin{equation}\label{systemsolve1-small-sub1-2-ela1-basedonupsilon-real}
\begin{split}
\langle\,\mathbf{B}^{real}_{n}\mathbf{C}X^{real},\mathbf{C}X^{real}\rangle\geq\frac{36t}{\pi^2\,d^6}&\left(\int_\Omega\int_\Omega\Phi_0(x,y)\Upsilon^{real^\top}(x)\Upsilon^{real}(y)~dx~dy\right.\\
&\left.-\sum_{m=1}^{M}\left|\bar{C}_mX^{real}_{m}\right|^2\int_{B^{(m)}}\int_{B^{(m)}}\Phi_0(x,y)~dx~dy\right)
\end{split}
\end{equation}
\begin{equation}\label{systemsolve1-small-sub1-2-ela1-basedonupsilon-img}
\begin{split}
\langle\,\mathbf{B}^{real}_{n}\mathbf{C}X^{img},\mathbf{C}X^{img}\rangle\geq\frac{36t}{\pi^2\,d^6}&\left(\int_\Omega\int_\Omega\Phi_0(x,y)\Upsilon^{img^\top}(x)\Upsilon^{img}(y)~dx~dy\right.\\
&\left.-\sum_{m=1}^{M}\left|\bar{C}_mX^{img}_{m}\right|^2\int_{B^{(m)}}\int_{B^{(m)}}\Phi_0(x,y)~dx~dy\right)
\end{split}
\end{equation}
Applying the mean value theorem to the harmonic function $\frac{1}{4\pi\vert x-y \vert}$, as done in \cite[p:109-110]{M-M:MathNach2010}, we have the following estimate
\begin{equation}
 \begin{split}\label{systemsolve1-small-sub1-2-ac1-basedonupsilon-2ndpart*}
  \int_{B^{(m)}}\int_{B^{(m)}}\Phi_0(x,y)~dx~dy\,=\,\frac{1}{4\pi}\int_{B_d}\int_{B_d}\frac{1}{|x-y|}~dx~dy \,\leq\, \frac{\pi\,d^5}{60}.
 \end{split}
\end{equation}
Consider the first term in the right-hand side of \eqref{systemsolve1-small-sub1-2-ela1-basedonupsilon-real}, denote it by $A_R^{real}$, then by Green's theorem
\begin{eqnarray}\label{systemsolve1-small-sub1-2-ela1-basedonupsilon-1stpart}
A_R^{real}&:=&\int_\Omega\int_\Omega\Phi_0(x,y)\Upsilon^{real^\top}(x)\Upsilon^{real}(y)~dx~dy\\
&=&\underbrace{\int_\Omega\left|\nabla_x\int_\Omega\Phi_0(x,y)\Upsilon^{real}(y)~dy\right|^2~dx}_{=:B_R^{real}\geq0}\nonumber\\
&&\qquad-\underbrace{\int_{\partial\Omega}\left(\frac{\partial}{\partial\nu_x}\int_\Omega\Phi_0(x,y)\Upsilon^{real}(y)~dy\right)^\top\left(\int_\Omega\Phi_0(x,y)\Upsilon^{real}(y)~dy\right)dS_x}_{=:C_R^{real}}.\nonumber
\end{eqnarray}
We have
\begin{equation}\label{systemsolve1-small-sub1-2-ela1-basedonupsilon-1stpart-1}
 \begin{split}
  C_R^{real}
&=\int_{\partial\Omega}\left(\int_\Omega\frac{\partial}{\partial\nu_x}\Phi_0(x,y)\Upsilon^{real}(y)~dy\right)^\top\left(\int_\Omega\Phi_0(x,y)\Upsilon^{real}(y)~dy\right)dS_x\\
&=\int_{\partial\Omega}\left(\int_{\Omega_s}\frac{\partial}{\partial\nu_x}\Phi_0(x,y)\Upsilon^{real}(y)~dy\right)^\top\left(\int_{\Omega_s}\Phi_0(x,y)\Upsilon^{real}(y)~dy\right)dS_x\\
&=\int_{\partial\Omega}\left(\int_{\Omega_s}\frac{-(x-y)}{4\pi|x-y|^3}\Upsilon^{real}(y)~dy\right)^\top\left(\int_{\Omega_s}\frac{1}{4\pi|x-y|}\Upsilon^{real}(y)~dy\right)dS_x,
 \end{split}
\end{equation}
which gives the following estimate;
\begin{equation}\label{systemsolve1-small-sub1-2-ela1-basedonupsilon-1stpart-2}
 \begin{split}
|C_R^{real}|&\leq\frac{1}{16\pi^2}\int_{\partial\Omega}\frac{1}{|R-r|^3}\left(\int_{\Omega_s}|\Upsilon^{real}(y)|~dy\right)^2dS_x\\
&\leq\frac{1}{16\pi^2}\frac{1}{(R-r)^3}\int_{\partial\Omega}|\Omega_s|~||\Upsilon^{real}||^2_{\mathcal{L}^2({\Omega_s})}dS_x\\
 &=\frac{r^3}{12\pi(R-r)^3}\sum_{m=1}^{M}\left|\bar{C}_m\,X^{real}_m\right|^2~|\Omega|\\
&=\frac{R^2r^3}{3(R-r)^3}\sum_{m=1}^{M}\left|\bar{C}_m\,X^{real}_m\right|^2.
\end{split}
\end{equation}
Substitution of \eqref{systemsolve1-small-sub1-2-ela1-basedonupsilon-1stpart-2} in \eqref{systemsolve1-small-sub1-2-ela1-basedonupsilon-1stpart} gives
\begin{eqnarray}\label{systemsolve1-small-sub1-2-ela1-basedonupsilon-1stpart-3}
&\int_\Omega\int_\Omega\Phi_0(x,y)\Upsilon^{real}(x)\Upsilon^{real}(y)~dx~dy\nonumber\\
&\geq&\hspace{-2cm}\int_\Omega\left|\nabla_x\int_\Omega\Phi_0(x,y)\Upsilon^{real}(y)~dy\right|^2~dx-\frac{R^2r^3}{3(R-r)^3}\sum_{m=1}^{M}\left|\bar{C}_m\,X^{real}_m\right|^2.
\end{eqnarray}
By considering the first term in the right-hand side of \eqref{systemsolve1-small-sub1-2-ela1-basedonupsilon-img}, and following the  same procedure as mentioned in
\eqref{systemsolve1-small-sub1-2-ela1-basedonupsilon-1stpart}, \eqref{systemsolve1-small-sub1-2-ela1-basedonupsilon-1stpart-1} and \eqref{systemsolve1-small-sub1-2-ela1-basedonupsilon-1stpart-2}, we obtain
\begin{eqnarray}\label{systemsolve1-small-sub1-2-ela1-basedonupsilon-1stpart-4}
&&\hspace{-1cm}\int_\Omega\int_\Omega\Phi_0(x,y)\Upsilon^{img}(x)\Upsilon^{img}(y)~dx~dy\nonumber\\
&\geq&\hspace{0cm}\int_\Omega\left|\nabla_x\int_\Omega\Phi_0(x,y)\Upsilon^{img}(y)~dy\right|^2~dx-\frac{R^2r^3}{3(R-r)^3}\sum_{m=1}^{M}\left|\bar{C}_m\,X^{img}_m\right|^2.
\end{eqnarray}
Under our assumption $t>0$,  \eqref{systemsolve1-small-sub1-2-ela1-basedonupsilon-real}, \eqref{systemsolve1-small-sub1-2-ela1-basedonupsilon-img}
\eqref{systemsolve1-small-sub1-2-ac1-basedonupsilon-2ndpart*}, \eqref{systemsolve1-small-sub1-2-ela1-basedonupsilon-1stpart-3} and \eqref{systemsolve1-small-sub1-2-ela1-basedonupsilon-1stpart-4} lead to
\begin{eqnarray}
&\langle\,\mathbf{B}^{real}_{n}\mathbf{C}X^{real},\mathbf{C}X^{real}\rangle\nonumber\\
&\hspace{-1.5cm}\geq&\hspace{-2.35cm}\frac{36t}{\pi^2\,d^6}\left(\int_\Omega\left|\nabla_x\int_\Omega\Phi_0(x,y)\Upsilon^{real}(y)~dy\right|^2~dx-\left[\frac{R^2r^3}{3(R-r)^3}+\frac{\pi\,d^5}{60}\right]\sum_{m=1}^{M}\left|\bar{C}_m\,X^{real}_m\right|^2\right),
\label{systemsolve1-small-sub1-2-ela1-basedonupsilon-real-1}\\
&\langle\,\mathbf{B}^{real}_{n}\mathbf{C}X^{img},\mathbf{C}X^{img}\rangle\nonumber\\
&\hspace{-1.5cm}\geq&\hspace{-2.35cm}\frac{36t}{\pi^2\,d^6}\left(\int_\Omega\left|\nabla_x\int_\Omega\Phi_0(x,y)\Upsilon^{img}(y)~dy\right|^2~dx-\left[\frac{R^2r^3}{3(R-r)^3}+\frac{\pi\,d^5}{60}\right]\sum_{m=1}^{M}\left|\bar{C}_m\,X^{img}_m\right|^2\right).
\label{systemsolve1-small-sub1-2-ela1-basedonupsilon-img-1}
\end{eqnarray}
Then \eqref{systemsolve1-small-sub1-2-ela1}, \eqref{systemsolve1-small-sub1-2-ela1-basedonupsilon-real-1} and \eqref{systemsolve1-small-sub1-2-ela1-basedonupsilon-img-1} imply
\begin{equation}\label{fnlinvert-small-ela-1}
\begin{split}
 \left(\min_{m=1}^{M}\lambda^{min}_{eig_m}-\frac{36t}{\pi^2\,d^6}\right.&\left.\left[\frac{R^2r^3}{3(R-r)^3}+\frac{\pi\,d^5}{60}\right]\max_{m=1}^{M}\lambda^{max^2}_{eig_m}\right)\sum_{m=1}^{M}\|X_m\|_2^{2}\\
&\leq2\left(\max_{m=1}^{M}\lambda^{max}_{eig_m}\right)\left(\sum_{m=1}^{M}\|X_m\|_2^2\right)^{1/2}\left(\sum_{m=1}^{M}\|Y_m\|_2^2\right)^{1/2}.
\end{split}
\end{equation}
As we have $R$ arbitrary, by tending $R$ to $\infty$, we can write 
\eqref{fnlinvert-small-ela-1} as
\begin{equation}\label{fnlinvert-small-ela-2}
\begin{split}
\left(\min_{m=1}^{M}\lambda^{min}_{eig_m}-\frac{3t}{5\pi\,d}\max_{m=1}^{M}\lambda^{max^2}_{eig_m}\right)\sum_{m=1}^{M}\|X_m\|_2^{2}
\leq2\left(\max_{m=1}^{M}\lambda^{max}_{eig_m}\right)\left(\sum_{m=1}^{M}\|X_m\|_2^2\right)^{1/2}\left(\sum_{m=1}^{M}\|Y_m\|_2^2\right)^{1/2}.
\end{split}
\end{equation}
which  yields
\begin{equation}\label{fnlinvert-small-ela-2f}
\begin{split}
\sum_{m=1}^{M}\|X_m\|_2^{2}
\leq4 \left(\min_{m=1}^{M}\lambda^{min}_{eig_m}-\frac{3t}{5\pi\,d}\max_{m=1}^{M}\lambda^{max^2}_{eig_m}\right)^{-2}\left(\max_{m=1}^{M}\lambda^{max}_{eig_m}\right)^2\sum_{m=1}^{M}\|Y_m\|_2^2.
\end{split}
\end{equation}
Thus, if $\left(\max\limits_{1\leq\,m\leq\,M}\lambda^{max^2}_{eig_m}\right)<t^{-1}\left(\frac{5\pi}{3}d\min\limits_{1\leq\,m\leq\,M}\lambda^{min}_{eig_m}\right)$ then the matrix $\mathbf{B}$
in algebraic system \eqref{compacfrm1ela} is invertible and the estimate \eqref{fnlinvert-small-ela-2} and so \eqref{mazya-fnlinvert-small-ela-2} holds.
\end{proof}
\begin{corollary}\label{Mazyawrkthmela-cor}
 If $(\lambda+2\mu)^2\left(\max\limits_{1\leq\,m\leq\,M}\,C^a_m\right)^2<t^{-1}\left(\frac{5\pi}{3}\mu{d}\min\limits_{1\leq\,m\leq\,M}C^a_m\right)$, then the matrix $\mathbf{B}$
is invertible and the solution vector $\tilde{Q}$ of \eqref{compacfrm1ela} satisfies the estimate:
\begin{eqnarray}\label{mazya-fnlinvert-small-ela-3-cor} 
 \sum_{m=1}^{M}\|\tilde{Q}_m\|_2
\leq&2 &\left(1-\frac{3t}{5\pi}\frac{(\lambda+2\mu)^2}{\mu}\frac{\max\limits_{m=1}^{M}{C^a_m}^2}{d\min\limits_{m=1}^{M}C^a_m}\right)^{-1}\left(\frac{\max\limits_{m=1}^{M}C^a_m}{\min\limits_{m=1}^{M}C^a_m}\right)M\max_{m=1}^{M}C^a_m\max_{m=1}^{M}\|U^i(z_m)\|_2.
\end{eqnarray}
\end{corollary}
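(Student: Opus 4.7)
The plan is to deduce this corollary directly from Lemma \ref{Mazyawrkthmela} by using the two-sided eigenvalue bounds provided by Lemma \ref{capacitance-eig-single}, namely
\[
 \mu\,C^a_m\,\leq\,\lambda^{min}_{eig_m}\,\leq\,\lambda^{max}_{eig_m}\,\leq\,(\lambda+2\mu)\,C^a_m,\qquad m=1,\dots,M.
\]
These inequalities allow us to replace all spectral quantities of the elastic capacitance matrices $\bar{C}_m$ appearing in the invertibility condition and in the estimate \eqref{mazya-fnlinvert-small-ela-2} of Lemma \ref{Mazyawrkthmela} by the scalar acoustic capacitances $C^a_m$.

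First I would address the invertibility of $\mathbf{B}$. The hypothesis of the corollary reads $(\lambda+2\mu)^2(\max_m C^a_m)^2 < t^{-1}\!\left(\frac{5\pi}{3}\mu\, d\min_m C^a_m\right)$. Using $\lambda^{max}_{eig_m}\leq (\lambda+2\mu) C^a_m$ and $\lambda^{min}_{eig_m}\geq \mu\, C^a_m$, one immediately obtains
\[
\max_{1\leq m\leq M}\lambda^{max^2}_{eig_m}\;\leq\;(\lambda+2\mu)^2\max_{m}(C^a_m)^2 \;<\;t^{-1}\Bigl(\tfrac{5\pi}{3}\,d\,\mu\min_{m}C^a_m\Bigr)\;\leq\;t^{-1}\Bigl(\tfrac{5\pi}{3}\,d\min_{m}\lambda^{min}_{eig_m}\Bigr),
\]
which is precisely the sufficient condition of Lemma \ref{Mazyawrkthmela}. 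Hence $\mathbf{B}$ is invertible and the estimate \eqref{mazya-fnlinvert-small-ela-2} applies.

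Next, to convert the $\ell^2$-type bound of Lemma \ref{Mazyawrkthmela} into the $\ell^1$-form stated in the corollary, I would take square roots in \eqref{mazya-fnlinvert-small-ela-2} and then apply Cauchy--Schwarz twice: once on the left (namely $\sum_m\|\tilde Q_m\|_2\leq\sqrt{M}\,(\sum_m\|\tilde Q_m\|_2^2)^{1/2}$), and once on the right (namely $(\sum_m\|U^i(z_m)\|_2^2)^{1/2}\leq\sqrt{M}\,\max_m\|U^i(z_m)\|_2$). The product of these two $\sqrt{M}$ factors yields the factor $M$ in \eqref{mazya-fnlinvert-small-ela-3-cor}. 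The remaining spectral constants are handled by the same eigenvalue bounds, using the algebraic identity
\[
 \min_{m}\lambda^{min}_{eig_m}-\tfrac{3t}{5\pi d}\max_m\lambda^{max^2}_{eig_m}
 \;\geq\;\mu\min_{m}C^a_m\left(1-\tfrac{3t}{5\pi}\tfrac{(\lambda+2\mu)^2}{\mu}\tfrac{\max_m(C^a_m)^2}{d\min_m C^a_m}\right),
\]
together with $\max_m\lambda^{max}_{eig_m}\leq(\lambda+2\mu)\max_m C^a_m$, so that the prefactor $\left(\min\lambda^{min}_{eig_m}-\tfrac{3t}{5\pi d}\max\lambda^{max^2}_{eig_m}\right)^{-1}\max\lambda^{max}_{eig_m}$ collects, after simplification, into $\tfrac{\max C^a_m}{\min C^a_m}\max C^a_m$ multiplied by the indicated bracketed quantity.

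The substantive content is essentially already contained in Lemma \ref{Mazyawrkthmela}; the step to watch carefully is the bookkeeping of constants when combining the eigenvalue bounds with the passage from the quadratic estimate to the linear one, so that the resulting condition under which the prefactor remains positive is exactly the hypothesis of the corollary and no spurious dependence on $M$ enters the invertibility threshold. No additional analytic obstacle is expected.
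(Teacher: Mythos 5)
Your proposal follows the same route as the paper: verify the sufficient condition of Lemma \ref{Mazyawrkthmela} via the two-sided eigenvalue bounds of Lemma \ref{capacitance-eig-single}, pass from the quadratic estimate \eqref{mazya-fnlinvert-small-ela-2} to the $\ell^1$-form by taking square roots and applying Cauchy--Schwarz twice (producing the factor $M$), and then use Lemma \ref{capacitance-eig-single} again to replace the spectral quantities by acoustic capacitances. This is exactly the paper's argument.

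One bookkeeping slip worth flagging: after the square root and Cauchy--Schwarz step, the prefactor inherited from \eqref{mazya-fnlinvert-small-ela-2} is $\bigl(\min_m\lambda^{min}_{eig_m}-\tfrac{3t}{5\pi d}\max_m\lambda^{max^2}_{eig_m}\bigr)^{-1}\bigl(\max_m\lambda^{max}_{eig_m}\bigr)^{2}$, with the $\max_m\lambda^{max}_{eig_m}$ squared, not to the first power as you wrote; this is in fact consistent with your target $\tfrac{\max_m C^a_m}{\min_m C^a_m}\max_m C^a_m$, which already contains two factors of $\max_m C^a_m$. Also note that inserting the eigenvalue bounds $\lambda^{max}_{eig_m}\leq(\lambda+2\mu)C^a_m$ and $\lambda^{min}_{eig_m}\geq\mu C^a_m$ into this squared prefactor formally produces an extra multiplicative constant $(\lambda+2\mu)^2/\mu$ in front of $\tfrac{(\max_m C^a_m)^2}{\min_m C^a_m}$; this factor does not appear in the paper's displayed bound \eqref{mazya-fnlinvert-small-ela-3-cor} either, so you are matching the paper, but when you carry out the simplification explicitly you should keep track of it.
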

\begin{proof}{\it{of Corollary \ref{Mazyawrkthmela-cor}}}.
Let us assume the condition $(\lambda+2\mu)^2\left(\max\limits_{1\leq\,m\leq\,M}\,C^a_m\right)^2<t^{-1}\left(\frac{5\pi}{3}\mu{d}\min\limits_{1\leq\,m\leq\,M}C^a_m\right)$,
then from Lemma \ref{capacitance-eig-single} the sufficient condition of Lemma \ref{Mazyawrkthmela} is satisfied and hence \eqref{mazya-fnlinvert-small-ela-2} holds. Now, by applying the
norm inequalities to \eqref{mazya-fnlinvert-small-ela-2}, we obtain
\begin{eqnarray}\label{mazya-fnlinvert-small-ela-3} 
 \sum_{m=1}^{M}\|\tilde{Q}_m\|_2
\leq&2 &\left(\min_{m=1}^{M}\lambda^{min}_{eig_m}-\frac{3t}{5\pi\,d}\max_{m=1}^{M}\lambda^{max^2}_{eig_m}\right)^{-1}\left(\max_{m=1}^{M}\lambda^{max}_{eig_m}\right)^2M\max_{m=1}^{M}\|U^i(z_m)\|_2.
\end{eqnarray}
Now, again by applying Lemma \ref{capacitance-eig-single} to the above inequality \eqref{mazya-fnlinvert-small-ela-3} gives the result \eqref{mazya-fnlinvert-small-ela-3-cor}.
\end{proof}

\section{Appendix}
The object of this section is to derive some used properties of the single layer operator
$\mathcal{S}_{ D_\epsilon}:L^2(\partial D_{\epsilon})\rightarrow H^1(\partial D_{\epsilon})$ defined by
\begin{eqnarray}\label{defofSpartialDeela}
\left(\mathcal{S}_{ D_\epsilon}\psi\right) (x):=\int_{\partial D_\epsilon}\Gamma^\omega(x,y)\psi(y)dy.
\end{eqnarray}
\begin{lemma}\label{invertibility-of-slpela}
There exists $\epsilon_0$ such that if $\epsilon < \epsilon_0$ then the operator $\mathcal{S}_{ D_\epsilon}$ is invertible.
\end{lemma}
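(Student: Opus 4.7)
The plan is to reduce the problem to the already-known invertibility of the single layer operator of the \emph{zero-frequency} Lam\'e system on $\partial D_\epsilon$, plus a Neumann series perturbation argument in $\epsilon$. Mimicking the splitting used for the double layer in \eqref{def-of_Dikela}-\eqref{def-of_Ddkela}, I would write
\begin{equation*}
\mathcal{S}_{D_\epsilon}=\mathcal{S}^{i_\omega}_{D_\epsilon}+\mathcal{S}^{d_\omega}_{D_\epsilon},\qquad
\mathcal{S}^{i_\omega}_{D_\epsilon}\psi(x):=\int_{\partial D_\epsilon}\Gamma^{0}(x,y)\psi(y)\,dy,\qquad
\mathcal{S}^{d_\omega}_{D_\epsilon}\psi(x):=\int_{\partial D_\epsilon}[\Gamma^{\omega}(x,y)-\Gamma^{0}(x,y)]\psi(y)\,dy.
\end{equation*}
The operator $\mathcal{S}^{i_\omega}_{D_\epsilon}:L^{2}(\partial D_\epsilon)\to H^{1}(\partial D_\epsilon)$ is invertible on any Lipschitz domain by the classical Dahlberg--Kenig--Verchota theory for the Lam\'e system, already cited by the authors in \cite{MO-MM:TJFAA2000, MS-MM:IMSC2006, MD:InteqnsaOpethe1997}. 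It then suffices to prove that $I+(\mathcal{S}^{i_\omega}_{D_\epsilon})^{-1}\mathcal{S}^{d_\omega}_{D_\epsilon}$ is invertible on $L^{2}(\partial D_\epsilon)$ for $\epsilon$ small, and I will do so by Neumann series.

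First I would establish the $\epsilon$-scaling of $(\mathcal{S}^{i_\omega}_{D_\epsilon})^{-1}$. Since $\Gamma^{0}$ is positively homogeneous of degree $-1$, the change of variables $y=\epsilon\eta+z$, $x=\epsilon\xi+z$ gives the pull-back identity
\begin{equation*}
(\mathcal{S}^{i_\omega}_{D_\epsilon}\psi)^\wedge(\xi)=\epsilon\,\mathcal{S}^{i_\omega}_{B}\hat\psi(\xi),\qquad\xi\in\partial B,
\end{equation*}
in complete analogy with Lemma \ref{rep1dbllayerela} for the double layer. Combining this with the norm equivalences \eqref{habib2*}--\eqref{habib1*} gives, exactly as in the proof of \eqref{nrm1dbllayer2-1ela}, the estimate
\begin{equation*}
\left\|(\mathcal{S}^{i_\omega}_{D_\epsilon})^{-1}\right\|_{\mathcal{L}(H^{1}(\partial D_\epsilon),\,L^{2}(\partial D_\epsilon))}\le \epsilon^{-1}\left\|(\mathcal{S}^{i_\omega}_{B})^{-1}\right\|_{\mathcal{L}(H^{1}(\partial B),\,L^{2}(\partial B))},
\end{equation*}
where the right-hand factor depends only on $B$ through its Lipschitz character.

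Next I would estimate $\mathcal{S}^{d_\omega}_{D_\epsilon}$ by a direct Taylor expansion of the kernel, as done for the double layer in \eqref{D^{d_{k}}ela}--\eqref{fnormD^{d_{k}}L2ela}. Writing the series \eqref{kupradzeten1} with the singular $l=0$ contribution removed, each remaining term is of the form $c_l|x-y|^{l-1}\,\textbf{I}$ or $c_l|x-y|^{l-3}(x-y)\otimes(x-y)$ with $l\ge1$. After pulling back to $\partial B$, such a term of order $l$ produces an overall $\epsilon^{l+1}$ factor (one $\epsilon^{l-1}$ from the kernel and $\epsilon^{2}$ from the surface measure); the tangential derivative needed for the $H^{1}$ norm only eats one additional $\epsilon^{-1}$, so each term carries at least an $\epsilon^{l}$, with $l\ge1$. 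Summing the resulting geometric series (valid once $\epsilon\omega_{\max}$ is smaller than a multiple of $\min\{c_s,c_p\}$) yields
\begin{equation*}
\left\|\mathcal{S}^{d_\omega}_{D_\epsilon}\right\|_{\mathcal{L}(L^{2}(\partial D_\epsilon),\,H^{1}(\partial D_\epsilon))}\le C(B,\omega_{\max},\lambda,\mu)\,\epsilon^{2}.
\end{equation*}

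Combining these two estimates, $\|(\mathcal{S}^{i_\omega}_{D_\epsilon})^{-1}\mathcal{S}^{d_\omega}_{D_\epsilon}\|_{\mathcal{L}(L^{2}(\partial D_\epsilon),\,L^{2}(\partial D_\epsilon))}\le C'\epsilon$, which is strictly less than $1$ provided $\epsilon<\epsilon_0:=1/C'$. The Neumann series then inverts $I+(\mathcal{S}^{i_\omega}_{D_\epsilon})^{-1}\mathcal{S}^{d_\omega}_{D_\epsilon}$, and composing with $(\mathcal{S}^{i_\omega}_{D_\epsilon})^{-1}$ gives a bounded inverse of $\mathcal{S}_{D_\epsilon}$. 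The main technical obstacle I expect is the uniform $H^{1}$-bound for $\mathcal{S}^{d_\omega}_{D_\epsilon}$: one must control tangential derivatives of the smooth part of the Kupradze kernel, and tracking the scaling of each Taylor term so that the series converges uniformly in the Lipschitz constant of $B$ requires essentially the same bookkeeping (with Stirling via Lemma \ref{stirlingapproxlemma}) that produced the constants $\tilde C_7,\tilde C_8$ in Proposition \ref{propsmjsmmestdblela}.
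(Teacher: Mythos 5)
Your route is genuinely different from the paper's. The paper's (one-line) proof ``follows as the one of Proposition~\ref{existence-of-sigmasdbl}'': $\mathcal{S}_{D_\epsilon}:L^2(\partial D_\epsilon)\to H^1(\partial D_\epsilon)$ is Fredholm of index zero on a Lipschitz boundary (via the references \cite{MO-MM:TJFAA2000,MS-MM:IMSC2006,MD:InteqnsaOpethe1997} at $\omega=0$, plus compactness of the smooth perturbation), and injectivity is obtained from the uniqueness of the interior and exterior Dirichlet problems for the single-layer potential together with its conormal jump; the small-$\epsilon$ restriction enters only to keep $\omega^2$ below the first Dirichlet--Lam\'e eigenvalue of $D_\epsilon$, exactly as in the footnote to that proposition. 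Your Neumann-series perturbation of the zero-frequency single layer is a legitimate and more quantitative alternative, but one step is not justified as written.

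You claim $\|\mathcal{S}^{d_\omega}_{D_\epsilon}\|_{\mathcal{L}(L^2(\partial D_\epsilon),H^1(\partial D_\epsilon))}\le C\epsilon^{2}$ from power counting that gives ``each term carries at least an $\epsilon^{l}$ with $l\ge1$.'' That bookkeeping yields only $O(\epsilon)$, and paired with $\|(\mathcal{S}^{i_\omega}_{D_\epsilon})^{-1}\|_{\mathcal{L}(H^1,L^2)}=O(\epsilon^{-1})$ it produces merely an $O(1)$ contraction constant, which does not close the Neumann series. The bound $O(\epsilon^{2})$ is in fact correct, but only because the $l=1$ term of $\Gamma^{\omega}-\Gamma^{0}$ is the \emph{constant} matrix $\frac{i\omega}{12\pi}\left(\frac{2}{c_s^{3}}+\frac{1}{c_p^{3}}\right)\textbf{I}$ --- in \eqref{kupradzeten1} the second sum carries the explicit factor $(l-1)$, and differentiating $|x-y|^{l-1}$ in the first sum produces another $(l-1)$ --- so its tangential derivative vanishes identically and the derivative contributions in fact start at $l=2$. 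This must be said; without it the argument proves less than it claims. A cleaner way to reach the same conclusion, and to avoid the $\epsilon^{-1}$ competition altogether, is to perturb on the fixed reference boundary: by \eqref{rep1singulayer1} it suffices to invert $\mathcal{S}^{\epsilon}_{B}=\mathcal{S}^{i_\omega}_{B}+(\mathcal{S}^{\epsilon}_{B}-\mathcal{S}^{i_\omega}_{B})$ on $\partial B$, where $\|(\mathcal{S}^{i_\omega}_{B})^{-1}\|_{\mathcal{L}(H^1(\partial B),L^2(\partial B))}$ is an $\epsilon$-independent constant and $\|\mathcal{S}^{\epsilon}_{B}-\mathcal{S}^{i_\omega}_{B}\|_{\mathcal{L}(L^2(\partial B),H^1(\partial B))}=O(\epsilon)$ by inspection of \eqref{kupradzeten1} with frequency $\epsilon\omega$.
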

\begin{proof}{\it{of Lemma \ref{invertibility-of-slpela}}.}
Proof of this Lemma follows as the one of Proposition \ref{existence-of-sigmasdbl}.
\end{proof}
\begin{lemma}\label{rep1singulayerela}
 Let $\phi\in H^{1}(\partial D_\epsilon)$ and $\psi \in L^{2}(\partial D_\epsilon)$. Then,
\begin{equation}\label{rep1singulayer1}
 \mathcal{S}_{ D_\epsilon}\psi=\epsilon~ (\mathcal{S}^\epsilon_B \hat{\psi})^\vee,
\end{equation}
\begin{equation}\label{rep1singulayer2ela}
 \mathcal{S}_{ D_\epsilon}^{-1}\phi=\epsilon^{-1} ({\mathcal{S}^\epsilon_B}^{-1} \hat{\phi})^\vee
\end{equation}
and
\begin{equation}\label{nrm1singulayer2ela}
 \left\|\mathcal{S}_{ D_\epsilon}^{-1}\right\|_{\mathcal{L}\left(H^1(\partial D_\epsilon), L^2(\partial D_\epsilon) \right)}\leq
\epsilon^{-1}\left\|{\mathcal{S}^\epsilon_B}^{-1}\right\|_{\mathcal{L}\left(H^1(\partial B), L^2(\partial B) \right)}
\end{equation}
with $\mathcal{S}^\epsilon_B \hat{\psi}(\xi):=\int_{\partial B} \Gamma^{\epsilon\omega}(\xi,\eta)\hat{\psi}(\eta) d\eta$.
\end{lemma}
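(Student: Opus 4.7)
{\it{of Lemma \ref{rep1singulayerela}}.}
The plan is to mimic, step by step, the proof of Lemma \ref{rep1dbllayerela}, replacing the double layer operator and its jump relations by the single layer operator and its mapping properties. The main algebraic ingredient will be the scaling identity
\[
\Gamma^{\omega}(\epsilon\xi + z,\, \epsilon\eta + z) \;=\; \epsilon^{-1}\,\Gamma^{\epsilon\omega}(\xi,\eta),
\]
which follows by inspection of the Kupradze tensor \eqref{kupradzeten}: the term $\frac{1}{\mu}\Phi_{\kappa_{s^\omega}}$ is homogeneous of degree $-1$ under the simultaneous change $x\mapsto\epsilon\xi+z$, $\omega\mapsto\epsilon\omega$ (which transforms $\kappa_{s^\omega}|x-y|$ into $\kappa_{s^{\epsilon\omega}}|\xi-\eta|$), while in the second term the two gradients $\nabla_x\nabla_x^\top$ pick up a factor $\epsilon^{-2}$ and the prefactor $\omega^{-2}$ a factor $\epsilon^{2}$, matching the scaling of $\Phi_{\kappa_{s^{\epsilon\omega}}} - \Phi_{\kappa_{p^{\epsilon\omega}}}$.

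First, I would establish \eqref{rep1singulayer1}. Setting $s=\epsilon\xi+z$ and changing variables $t = \epsilon\eta + z$ in the surface integral (with surface element $dt = \epsilon^{2}d\eta$) yields
\[
\mathcal{S}_{D_\epsilon}\psi(s)\;=\;\int_{\partial D_\epsilon}\Gamma^{\omega}(s,t)\psi(t)\,dt
\;=\;\int_{\partial B}\epsilon^{-1}\Gamma^{\epsilon\omega}(\xi,\eta)\,\hat{\psi}(\eta)\,\epsilon^{2}d\eta
\;=\;\epsilon\bigl(\mathcal{S}^{\epsilon}_{B}\hat{\psi}\bigr)(\xi),
\]
and reinterpreting in the original variable gives $\mathcal{S}_{D_\epsilon}\psi = \epsilon(\mathcal{S}^{\epsilon}_{B}\hat\psi)^\vee$. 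Identity \eqref{rep1singulayer2ela} then follows by a direct composition check: applying $\mathcal{S}_{D_\epsilon}$ to the candidate $\epsilon^{-1}({\mathcal{S}^\epsilon_B}^{-1}\hat\phi)^\vee$ and using \eqref{rep1singulayer1} collapses the $\epsilon$ factors and returns $\phi$. The invertibility of $\mathcal{S}^\epsilon_B$ on $L^2(\partial B)\to H^1(\partial B)$ needed here is inherited, for $\epsilon$ small enough, from the invertibility of $\mathcal{S}_{D_\epsilon}$ obtained in Lemma \ref{invertibility-of-slpela}, via the same identity read in reverse.

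For the operator-norm bound \eqref{nrm1singulayer2ela}, I would combine \eqref{rep1singulayer2ela} with the scaling inequalities of Lemma \ref{L2H1estimates}. Specifically, \eqref{habib2*} gives $\|({\mathcal{S}^\epsilon_B}^{-1}\hat\phi)^\vee\|_{L^2(\partial D_\epsilon)} = \epsilon\,\|{\mathcal{S}^\epsilon_B}^{-1}\hat\phi\|_{L^2(\partial B)}$, and the left half of \eqref{habib1*} gives $\epsilon\,\|\hat\phi\|_{H^1(\partial B)}\leq \|\phi\|_{H^1(\partial D_\epsilon)}$. Dividing produces
\[
\frac{\|\mathcal{S}_{D_\epsilon}^{-1}\phi\|_{L^2(\partial D_\epsilon)}}{\|\phi\|_{H^1(\partial D_\epsilon)}}
\;\leq\;\epsilon^{-1}\,\frac{\|{\mathcal{S}^\epsilon_B}^{-1}\hat\phi\|_{L^2(\partial B)}}{\|\hat\phi\|_{H^1(\partial B)}},
\]
and taking the supremum over $\phi$ yields the claim. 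There is no real obstacle here: the only point requiring a moment of care is the bookkeeping of the $\epsilon$-powers in the $H^1$ scaling, where one must use the one-sided inequality in the correct direction so that the denominator is bounded below (and not above), producing the factor $\epsilon^{-1}$ in front of the reference-domain norm.
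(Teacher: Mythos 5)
Your proof is correct and follows essentially the same route as the paper: rescale via $\Gamma^\omega(\epsilon\xi+z,\epsilon\eta+z)=\epsilon^{-1}\Gamma^{\epsilon\omega}(\xi,\eta)$ and $dt=\epsilon^2 d\eta$ for \eqref{rep1singulayer1}, verify \eqref{rep1singulayer2ela} by composing with $\mathcal{S}_{D_\epsilon}$, and obtain \eqref{nrm1singulayer2ela} from Lemma \ref{L2H1estimates}. The extra detail you give on why the Kupradze tensor scales homogeneously and on inheriting the invertibility of $\mathcal{S}^\epsilon_B$ from that of $\mathcal{S}_{D_\epsilon}$ is left implicit in the paper but does not change the argument.
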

\begin{proof}{\it{of Lemma \ref{rep1singulayerela}}.}
\ ~ \ \\
\begin{itemize}
\item We have,
 \begin{eqnarray*}
 \mathcal{S}_{ D_\epsilon}\psi(x)&=&\int_{\partial D_\epsilon}\Gamma^{\omega}(x,y)\psi(y) dy\\
          &=&\int_{\partial B}\frac{1}{\epsilon}\Gamma^{\epsilon\omega}(\xi,\eta)\psi(\epsilon\eta+z) \epsilon^{2}d\eta\\
          &=&\epsilon ~\mathcal{S}^\epsilon_B \hat{\psi}(\xi).
 \end{eqnarray*}
The above gives us \eqref{rep1singulayer1}.
\item The following equalities, using \eqref{rep1singulayer1},
\begin{eqnarray*}
\mathcal{S}_{ D_\epsilon}({\mathcal{S}^\epsilon_B}^{-1} \hat{\phi})^\vee
                                       &=& \epsilon~(\mathcal{S}^\epsilon_B{\mathcal{S}^\epsilon_B}^{-1} \hat{\phi})^\vee 
                                       \,=\,\epsilon~\hat{\phi}^{\vee}
                                       \,=\, \epsilon~\phi
\end{eqnarray*}
provides us \eqref{rep1singulayer2ela}.
\item We have from the estimate,
\begin{eqnarray*}
 \left\|\mathcal{S}_{ D_\epsilon}^{-1}\right\|_{\mathcal{L}\left(H^1(\partial D_\epsilon), L^2(\partial D_\epsilon) \right)}&:=&
\substack{Sup\\ \phi(\neq0)\in H^{1}(\partial D_\epsilon)} \frac{||\mathcal{S}_{ D_\epsilon}^{-1}\phi||_{L^2(\partial D_\epsilon)}}{||\phi||_{H^1(\partial D_\epsilon)}}\\
&\,\substack{\leq\\\eqref{habib2*},\eqref{habib1*}}\,&\substack{Sup\\ \phi(\neq0)\in H^{1}(\partial D_\epsilon)}
\frac{\epsilon~\left\|(\mathcal{S}_{ D_\epsilon}^{-1}\phi)^{\wedge}\right\|_{L^2(\partial B)}}{\epsilon~||\hat{\phi}||_{H^1(\partial B)}}\\
&\,\substack{=\\\eqref{rep1singulayer2ela}}\,&\substack{Sup\\ \hat{\phi}(\neq0)\in H^{1}(\partial D_\epsilon)}
\frac{\epsilon^{-1}\left\| {\mathcal{S}^\epsilon_B}^{-1} \hat{\phi}\right\|_{L^2(\partial B)}}{||\hat{\phi}||_{H^1(\partial B)}} \\
&=&\epsilon^{-1}\left\|{\mathcal{S}^\epsilon_B}^{-1}\right\|_{\mathcal{L}\left(H^1(\partial B), L^2(\partial B) \right)}. 
\end{eqnarray*}
\end{itemize}
\end{proof}
\bibliographystyle{abbrv}

\end{document}